\newtheorem{theorem}{Theorem}[section]
\newtheorem{proposition}[theorem]{Proposition}
\newtheorem{example}[theorem]{Example}
\newtheorem{lemma}[theorem]{Lemma}
\newtheorem{remark}[theorem]{Remark}
\newtheorem{defn}[theorem]{Definition}
\DeclareMathOperator{\diag}{\mathrm{diag}}
\DeclareMathOperator{\dnf}{\mathrm{dnf}}
\DeclareMathOperator{\err}{\mathrm{err}}
\DeclareMathOperator{\ber}{\mathrm{Ber}}
\DeclareMathOperator{\unif}{\mathrm{Unif}}
\newcommand{\1}{\mathbf 1}
\newcommand{\0}{\mathbf 0}
\newcommand{\e}{\mathbf e}
\newcommand{\F}{\mathrm F}
\newcommand{\R}{\mathbb R}
\newcommand{\naturals}{\mathbb N}
\newcommand{\rmk}[1]{\begin{remark}\rm{#1}\end{remark}}
\newcommand{\ex}[1]{\begin{example}\rm{#1}\end{example}}
\begin{document}
\title{Clusters in Markov Chains via Singular Vectors of Laplacian Matrices}

\author{Sam Cole\thanks{Department of Mathematics, University of Missouri, Columbia, MO, U.S.A. (\tt s.cole@missouri.edu).}
	\and  
	Steve Kirkland\thanks{Department of Mathematics, University of Manitoba, Winnipeg, MB, Canada (\tt stephen.kirkland@umanitoba.ca).}
}

\maketitle
\begin{abstract} Suppose that $T$ is a stochastic matrix. We propose an algorithm for identifying clusters in the Markov chain associated with $T$. The algorithm is recursive in nature, and in order to identify clusters, it uses the sign pattern of a left singular vector associated with the second smallest singular value of the Laplacian matrix $I-T.$ 
We prove a number of results that justify  the algorithm's approach, and illustrate the algorithm's performance with several numerical examples.  
\end{abstract}

\noindent \textbf{Keywords}: Markov chain, stochastic matrix, cluster, left singular vector, Laplacian matrix.  

\noindent \textbf{AMS classification numbers}: 15A18, 15B51, 60J22, 65C40.

\tableofcontents

\section{Introduction and preliminaries}



There is a good deal of work on identifying  clusters in Markov chains. This work is motivated in part by the numerous domains in which Markov chains are applied, including molecular conformation dynamics, vehicle traffic networks, economics, and wireless network design. 
Clusters (also referred to as  almost invariant aggregates, or metastable sets) correspond to a partition of the state space into subsets with the property that the density of transitions within the same subset is high, while the density of transitions between different subsets is low.
Within the context of domains of application,  clusters in a Markov chain may, for example, correspond to metastable chemical conformations of biomolecules \cite{DHFS},  or neighbourhoods in an urban traffic network \cite{CKS}.  Further, in the area of complex networks, there is interest in identifying community structure within such networks; again, a Markov chain, specifically the random walk on the corresponding graph, is a key tool for finding clusters in complex networks such as the world trade network and networks of scientific collaborators \cite{Pic}, and identifying well--connected regions in protein--protein interaction networks \cite{Azadetal}.

If a Markov chain exhibits clustering, then it is natural to expect that the corresponding transition matrix has several eigenvalues close to $1$,
and for this reason there is a body of work focusing on the use of eigenvalues  and eigenvectors of the transition matrix to detect the presence of clustering and identify the clusters themselves;   see 
\cite{DHFS}, \cite{DW},  \cite{CKS}, and \cite{BCFKS} for work along those lines. 
An alternate approach to cluster identification using the singular values and singular vectors of the transition matrix is described in \cite{fritzsche2008svd}. In particular, the right singular vector corresponding to the second largest singular value of the transition matrix is used to identify clusters. We note however that several issues with  approach of \cite{fritzsche2008svd} are identified in \cite{Tif}; in particular, \cite{Tif}   furnishes some counterexamples to some of the theoretical work in \cite{fritzsche2008svd}, and discusses additional technical hypotheses under which the conclusions of \cite{fritzsche2008svd} are valid. 

 
In this paper, we present an algorithm based on the SVD of the Laplacian for the Markov chain that can be used to
\begin{enumerate}[a)]
\item	Detect the presence of clusters in an arbitrary Markov chain.
\item	Recover clusters in the event that they are present.
\end{enumerate}
The algorithm works as follows. Given an $n \times n$  stochastic matrix $T,$ we consider the singular value decomposition of the associated \emph{Laplacian matrix},  $I-T.$ Note that $I-T$ is singular, and its nullity, say $k$, coincides with the multiplicity of $1$ as an eigenvalue of $T$, or equivalently, the number of direct summands in $T$ that have $1$ as a simple eigenvalue. In particular, if $T$ is a direct sum of $k$ irreducible stochastic matrices (the \emph{completely decoupled case}), it follows that $0$ is a singular value of $I-T$ of multiplicity $k$. 
That observation prompts the intuition that if an irreducible stochastic matrix $T$ is a small  perturbation of the transition matrix for a completely decoupled Markov chain, then $I-T$ will have at least one singular value that is small and positive. 


Thus we use the presence of a small positive singular value (for $I-T$) to detect clustering in $T$. In order to identify the clusters, we rely on a corresponding left singular vector, specifically its sign pattern.  Indeed the positive and negative entries of that singular vector yield a partition of the states into subsets which exhibit clustering in the sense defined in~\cite{fritzsche2008svd}.
As our method requires computing a left singular vector associated with a small singular value, the entire SVD is not needed, and we may rely on specialised methods for computing singular vectors for small singular values such as those appearing in 
 \cite{vanHuff1} and \cite{vanHuff2}. 
 
 Our paper makes several novel contributions. First, by working with singular vectors (as opposed to eigenvectors), our algorithm maintains some of the advantages outlined in \cite{fritzsche2008svd}, such as orthogonality of the singular vectors, and the fact that only one singular vector  needs to be computed at each iterative step. However, by working with the Laplacian matrix instead of the transition matrix, we avoid  the pitfalls that are identified in \cite{Tif}. 
Further, we prove results that
underscore and quantify the connections  between small singular values of the Laplacian matrix and several notions of clustering.
Finally, our clustering algorithm employs the \emph{dangling node fix} introduced by Page and Brin as part of Google's PageRank algorithm~\cite{brin1998anatomy,page1999pagerank}.
 Our analysis suggests that, in the context of a clustering algorithm, the DNF is an effective method for generating a stochastic matrix from a sub-stochastic matrix.  Use of the DNF enables us to work with a stochastic matrix  in each recursive call of the algorithm; this is in contrast to the approach of \cite{fritzsche2008svd}, which works with sub-stochastic matrices.    
 
\subsection{Outline}

Our paper is organized as follows:
\begin{itemize}
\item	In Section~\ref{subsec:clusterdef} we define several notions of clustering used in this paper.
\item	In Section~\ref{subsec:alg} we present our clustering algorithm.
\item	Sections~\ref{subsec:nearlydecoupled=>smallsing}-\ref{subsec:couplingmatrix} show that the existence of a small positive singular value is a good heuristic for detecting the presence of clustering: 
\begin{itemize}
\item	In Section~\ref{subsec:nearlydecoupled=>smallsing} we show that if an irreducible stochastic matrix $T$ is a small  perturbation of the transition matrix for a completely decoupled Markov chain, then $I-T$ will have at least one singular value that is small and positive.  
\item	Conversely, Sections~\ref{subsec:smallsing=>clustering} and~\ref{subsec:couplingmatrix} show that if $I - T$ has a small positive singular value, then it exhibits some clustering behaviour.  
\end{itemize}
\item	In Section~\ref{subsec:dnf} we discuss the \emph{dangling node fix}---a way of stochasticising a sub-stochastic matrix which is optimal in the sense that it minimizes the difference with the original sub-stochastic matrix in several norms.  This normalization is used in the recursive step of our algorithm.

\item	In Section~\ref{sec:numerical} we present numerical evidence of our algorithm's effectiveness.  We demonstrate its performance on several real-world and simulated examples.  

\item	In Section~\ref{sec:openproblems} we discuss several open problems arising from our work.

\end{itemize}


\subsection{Notation and definitions}

We will use standard notation for notions from matrix theory.  All matrices and vectors considered in this paper have real entries, unless otherwise stated.  We will also use the following less-standard notation:

\begin{itemize}

\item	For an $n \times n$ real symmetric matrix $A$, we will denote its eigenvalues by $\lambda_1(A) \geq \ldots \geq \lambda_n(A)$, or simply $\lambda_1 \geq \ldots \geq \lambda_n$ if the matrix $A$ is clear from context.  $\lambda_1$ is called the spectral radius of $A$ and is denoted by $\rho(A)$.

\item	For an $n \times n$ matrix $A$, we will denote its singular values by $\sigma_1(A) \geq \ldots \geq \sigma_n(A)$, or simply $\sigma_1 \geq \ldots \geq \sigma_n$ if the matrix $A$ is clear from context.


\item	For matrices $A$ and $B$ of the same dimension, we write $A \geq B$ 
 if all entries of $A$ are greater than or equal to 
  the corresponding entries of $B$.  If all entries of $A$ are nonnegative 
   we write $A \geq 0$. 
     The same notation applies if $A$ and $B$ are vectors of the same dimension.

\item	$I_n$, $J_n$, and $\1_n$ will denote the $n \times n$ identity matrix, $n \times n$ ones matrix, and $n$-dimensional ones vector, respectively.  We will omit the subscript if the dimension is clear from context.

\item	For an $n \times n$ matrix $A$ and $R, S \subseteq \{1, \ldots, n\}$, we will denote the submatrix of $A$ with row indices in $R$ and column indices in $S$ by $A[R, S]$, and we define $A[R] := A[R, R]$ to be the principal submatrix of $A$ with row and column indices in $R$.  For an $n$-dimensional vector $v$, $v[R]$ will denote the entries of $v$ corresponding to indices in $R$.

\item	For $p \in [1, \infty]$, $||\cdot||_p$ will denote the $\ell_p$-norm of a vector or the corresponding induced matrix norm.  $||\cdot||_\F$ will denote the Frobenius norm of a matrix.  We refer the reader to~\cite{horn2012matrix} for definitions.

\item	$|\cdot|$ will denote the entrywise absolute value of a matrix or vector.

\item	For square matrices $A, B$, we will let $A \oplus B := \diag(A, B)$, i.e.\ the direct sum of $A$ and $B$.

\end{itemize}

Finally, the following definitions will be crucial to our algorithm and analysis:

\begin{defn}[Stochastic, sub-stochastic]
A matrix $T \in \R^{n \times n}$ with nonnegative entries is called \emph{stochastic} if its  each of its rows sums to 1.  It is called \emph{sub-stochastic} if each of its rows sums to at most 1.
\end{defn}

Our clustering algorithm is based on the \emph{Laplacian} the transition matrix of a Markov chain:

\begin{defn}[Laplacian]
For an $n \times n$ matrix $A$ with row sums $r_1, \ldots, r_n$, the \emph{Laplacian} of $A$ is defined as $L(A) := \diag(r_1, \ldots, r_n) - A$.
\end{defn}

\noindent	Note that if $A$ is stochastic then $L(A) = I - A$.

Finally, we introduce the \emph{dangling node fix (DNF)} of a matrix, which will allow us to approximate sub-stochastic matrices with stochastic matrices.

\begin{defn}[Dangling node fix]\label{defn:dnf}
Let $T$ be an $n \times n$ sub-stochastic matrix.  The \emph{dangling node fix} of $T$ is the matrix $\dnf(T) := T + \frac1n(J - TJ)$.
\end{defn}

\noindent	Thus, the DNF is the matrix which results from distributing $1$ minus the sum of each row evenly among the entries of that row.  We note in passing that the term ``dangling node fix'' is a reference to Google's PageRank algorithm~\cite{brin1998anatomy,langville2006google,page1999pagerank}, where an analogous approach is used to transform a sub-stochastic matrix into a stochastic matrix.

\section{Main results}

\subsection{Definitions of clustering}\label{subsec:clusterdef}

Our primary objective is to be able to identify and recover clusters in a Markov chain; however, ``cluster'' is a vague term which simply means part of a network that is ``more connected'' than the network overall.  Thus, care must be taken to precisely define what we mean by clusters.  We will use two notions of clustering in this work.

   First, we will consider a Markov chain to be ``clustered'' if its transition matrix is a small perturbation of that of a \emph{completely decoupled} Markov chain:

\begin{defn}[Completely decoupled]\label{defn:purelyclustered}
A Markov chain or its transition matrix $T$ is called \emph{completely decoupled} if $T$ is the direct sum of at least two stochastic matrices.
\end{defn}

This leads to our first notion of clustering:

\begin{defn}[Nearly decoupled]\label{def:nrlyde}
Given $\epsilon > 0$ and a matrix norm $||\cdot||$, a Markov chain or its transition matrix $T \in \R^{n \times n}$ is called \emph{($\epsilon, ||\cdot||)$-nearly decoupled} if there exists a completely decoupled Markov chain with transition matrix $S \in \R^{n \times n}$ such that $||S - T|| \leq \epsilon$.
\end{defn}

\begin{remark}\label{rmk:inessential}\rm{
In any completely decoupled Markov chain, its transition matrix has eigenvalue 1 with multiplicity at least 2.  There are Markov chains with this property that are \emph{not} completely decoupled in the sense of Definition~\ref{defn:purelyclustered}.  For example:
\[T =
\begin{bmatrix}
1	& 0		& 0	\\
0	& 1		& 0	\\
.5	& .5	& 0
\end{bmatrix}
.\]
A Markov chain with this transition matrix could be considered ``clustered'' because state 3 is inessential and transitions to each of the two essential states with equal probability: if the process starts in state 3, it immediately transitions to one of the other two states and stays there forever.  Thus, the two essential states could be considered clusters, while the inessential state could be considered as not belonging to any cluster.

We will not consider such Markov chains in this work, and instead will use the more restrictive definition of completely decoupled laid out above.
}
\end{remark}

Next, we recall the notion of coupling matrices defined in~\cite{DHFS} and \cite{fritzsche2008svd}.  Such matrices can be used to quantify clustering in a Markov chain.

\begin{defn}[Coupling matrix]
For a block matrix $A = [A_{ij}]_{i, j = 1}^k \in R^{n \times n}$ and a vector $u = [u_1^\top| \ldots | u_k^\top]^\top \in \R^n$ with positive entries partitioned conformally with $A$, the \emph{coupling matrix} of $A$ with respect to $u$ is the $k \times k$ matrix $W_u(A)$ whose $(i, j)$ entry is given by \[\omega_u(A; i, j) := \frac{u_i^\top A_{ij}\1}{u_i^\top\1}.\]
\end{defn}

\noindent	Put simply, $\omega_u(A; i, j)$ computes the weighted average row sum of $A_{ij}$, with weights given by the entries of $u_i$.  Hence, a diagonally dominant coupling matrix indicates clustering, and the higher the diagonal entries are the stronger the clustering.

Note that the partition of the indices of $u$ and $A$ is an implicit parameter of the function $\omega_u$.  Also observe that if $A$ is the transition matrix of a completely decoupled Markov chain, then its coupling matrix is the identity.  We will use two weight vectors in this work: the ones vector, $\1$, and the \emph{left-iterative weight vector} (Definition~\ref{defn:liwv}). 

\rmk{
Both  \cite{DHFS} and \cite{fritzsche2008svd} actually define the notion of a nearly decoupled Markov chain in terms of a coupling matrix. The former paper uses the coupling matrix generated by the stationary vector of the transition matrix, while the latter paper uses the coupling matrix generated by the all ones vector. With those respective coupling matrices in hand, both papers define a Markov chain to be nearly decoupled if all of the diagonal entries of the coupling matrix are sufficiently large.

While Definition \ref{def:nrlyde} defines a nearly decoupled Markov chain    differently from   \cite{DHFS} and \cite{fritzsche2008svd}, the two notions are related. Specifically, suppose that $T$ is an ($\epsilon, ||\cdot||_\infty)$-nearly decoupled transition matrix, and write $T=S+E,$ where $S$ is a direct sum of $k$ stochastic matrices and $||E||_\infty \le \epsilon.$ Partition $T, E$ conformally with $S$ as $\left[\begin{array}{c} T_{ij} \end{array}\right]_{i,j=1, \ldots, k}$ and  $\left[\begin{array}{c} E_{ij} \end{array}\right]_{i,j=1, \ldots, k}$, respectively. For each $j=1, \ldots, k,$ we note that $|E_{jj}\1| \le ||E||_\infty \1,$ and hence $T_{jj}\1 = S_{jj}\1 + E_{jj}\1 = \1 +E_{jj}\1 \ge \1 -  ||E||_\infty \1 \ge (1-\epsilon)\1.$ Consequently, for any  positive vector $u,$ we have $\omega_u(T; j, j) \ge 1-\epsilon, j=1, \ldots, k,$ so that the diagonal entries of any coupling matrix are bounded below by  $1-\epsilon$.  Theorem \ref{thm:directsum_stochastic} below provides a complementary result in the case of the coupling matrix arising from the all ones vector. 
}





\subsection{The left singular vector algorithm}\label{subsec:alg}

We now state our clustering algorithm, which can be used both to detect the presence of clusters and to identify the clusters themselves.  Given a stochastic matrix $T$, we use the second smallest singular value $\sigma$ of $I - T$ to determine whether clusters are present or not.  If clusters are detected in this way, we use the positive and negative entries of a corresponding left singular vector $u$ to partition the indices of $T$ into potential clusters.  We then refine this partition by recursing on the principal submatrix induced by each part of the partition, implementing the dangling node fix  on each submatrix to make it stochastic.  We stop when the second smallest singular value is no longer sufficiently small in any of the recursive calls, and we return the refined partition.

\begin{algorithm}[H]

\caption{The Left Singular Vector algorithm}\label{alg}
Input: Stochastic matrix $T$ with index set $S$, tolerance $\tau$\\
Output: A set of disjoint subsets (clusters) of the index set of $T$


\begin{enumerate}
\item	\label{step:leftsingvec}Let $\sigma$ be the second smallest singular value of $I - T$ and $u$ a corresponding left singular vector with mixed signs.



\item	\label{step:partition}If $\sigma > \tau$, do nothing.  Otherwise, let $S_1$ and $S_2$ be the sets of indices corresponding to positive and negative entries of $u$, respectively.  

\item	\label{step:dnf}For $i = 1, 2$, let $\tilde T_i := \dnf(T[S_i])$.  
\item	Recurse on $\tilde T_1$  and $\tilde T_2$.  The recursive call on $\tilde T_i$ returns a set of clusters $\mathcal P_i$, i.e., a set of disjoint subsets of $S_i$.  Note that there may be some unclustered vertices in $S_i$ which do not belong to any of the sets in $\mathcal P_i$.  

\item	Return $\mathcal P := \mathcal P_1 \cup P_2$, i.e.\ the set of all clusters found.


\end{enumerate}
\end{algorithm}

\rmk{
We would like to point out the following:
\begin{itemize}

\item	If the input matrix $T$ is completely decoupled, then $\sigma = 0$ and hence it has left singular vectors with uniform sign.  However, in this case the left singular subspace corresponding to $\sigma$ will have dimension greater than 1, so it's always possible to choose a singular vector $u$ with mixed signs.

\item	In practice, when we partition the index set in Step~\ref{step:partition}, we also permute the indices of $T$ so that $S_1$ and $S_2$ are contiguous blocks of indices.  This will help visualize the clusters.  If this is done, care should be taken to keep track of the indices in the \emph{original} matrix corresponding to the submatrix currently being worked on in each recursive call.


\end{itemize}
}

During the course of the algorithm, we implicitly compute a natural weight vector, which we call the \emph{left-iterative} weight vector. This vector defines a coupling matrix which can be used to evaluate the quality of the clustering produced.

\begin{defn}[Left-iterative weight vector]\label{defn:liwv}
Let $T$ be the transition matrix of a Markov chain and $\tau > 0$ a tolerance.  The \emph{left-iterative} weight vector of $T$ with respect to $\tau$, denoted $v(T, \tau)$, is constructed recursively as part of Algorithm~\ref{alg} as follows:
\begin{itemize}
\item	Initialize $v(T, \tau)$ to $\1$.
\item	If $\sigma > \tau$ in Step~\ref{step:partition}, do not update $v(T, \tau)$.
\item	Otherwise, replace the entries of $v(T, \tau)$ corresponding to the indices in $S$ (where $S$ is the index set of the current recursive call) with $|u|$, where $u$ is the left singular vector computed in Step~\ref{step:leftsingvec}.
\end{itemize}
\end{defn}

\begin{remark}
\rm{We emphasize that $v(T, \tau)$ is defined \emph{iteratively}, not recursively in terms of $v(\tilde T_1, \tau)$ and $v(\tilde T_2, \tau)$.  It is a single vector of size $n$ (the size of the original input matrix) which is initialized to $\1$ in the top-level call to Algorithm~\ref{alg} and updated in every recursive call.  We show how $v(T, \tau)$ is computed explicitly along with the clustering algorithm in Appendix~\ref{sec:liwv}.}
\end{remark}

\rmk{Our motivation for using the left-iterative weight vector to generate a coupling matrix is as follows. In the case of a completely decoupled transition matrix $T$, running Algorithm \ref{alg} with tolerance zero yields a left-iterative weight vector $v(T,0)$ such that 
for each of the direct summands of $T$, the associated subvector  of $v(T,0)$ is a scalar multiple of the corresponding stationary distribution.  Hence, we may expect that in the nearly decoupled case, the subvector of the left-iterative weight vector corresponding to a cluster will approximate a left Perron vector of the associated principal submatrix; in that case we also expect a large diagonal entry in the coupling matrix.     Theorem~\ref{thm:small_sing} below provides partial support for that intuition.
}

The following example illustrates the algorithm's performance on a standard example of a clustered transition matrix.

\begin{example}\rm{
Consider  the Courtois matrix \cite[Appendix III]{courtois} (note that in Courtois' book, there is a typographical error in the $(6,2)$ entry; it is corrected here): 
\[C=\begin{bmatrix}
0.8500   &      0   &   0.1490   &   0.0009   &        0     & 0.00005  &          0     & 0.00005\\
0.1000   &   0.6500  &    0.2490  &         0   &   0.0009  &    0.00005   &        0  &    0.00005\\
0.1000   &   0.8000  &     0.0996  &     0.0003  &         0     &      0 &     0.0001  &         0\\
0  &    0.0004  &         0  &    0.7000  &    0.2995  &         0 &     0.0001  &         0\\
0.0005        &   0   &   0.0004  &    0.3990  &    0.6000  &    0.0001   &        0  &         0\\
0   &   0.00005  &         0      &     0   &   0.00005  &    0.6000  &    0.2499 &     0.1500\\
0.00003         &  0 &     0.00003  &    0.00004   &        0  &    0.1000 &     0.8000  &    0.0999 \\
0  &    0.00005   &        0    &       0    &  0.00005  &    0.1999   &   0.2500 &     0.5500
\end{bmatrix} .
\]
Here we  apply  Algorithm \ref{alg} to $C$, with tolerance $\tau=0.1.$  The singular values of $I-C$ are approximately $0, 0.0002, 0.0015, 0.2354, $ $  0.4935, 0.6053, 0.7063,$ and $  1.2824,$ and the left singular vector  corresponding to $0.0002$ is 
given by $$\left[\begin{array}{cccccccc}
  0.2997 &
0.3114&
0.1359&
0.5272&
0.3955&
-0.2262&
-0.5221&
-0.1914
\end{array} \right]^\top;$$ hence 
 $S_1=\{1, \ldots, 5\}$ and $S_2=\{6,7,8\}.$ We observe that the weight vector arising from that singular vector yields the following $2 \times 2$ coupling matrix:   
$\left[ \begin{array}{cc}
0.9999   & 0.0001\\
0.0001    &0.9999 \end{array} \right].$

When we apply the dangling node fix to $C[S_2,S_2]$ we find that the second smallest  singular value of $I-\dnf(C[S_2,S_2])$ is approximately $0.4935,$ and so no further iterations are performed on $C[S_2,S_2]$. 
Next, we perform the dangling node fix on $C[S_1,S_1],$ and compute the singular values of $I-\dnf(C[S_1,S_1])$. These are approximately $ 0, 
0.0015 ,  0.2354, 0.7063,$ and  $1.2824,$ and the left singular vector 
 corresponding to $0.0015$ is
 given by $$\left[\begin{array}{ccccc}    -0.5447&
 	-0.5659&
 	-0.2471&
 	0.4539&
 	0.3406\end{array}\right]^\top.$$ 
 Setting 
 $\tilde{S}_1=\{1, 2, 3\}$ and  $\tilde{S}_2=\{4,5\},$ we find that the second smallest singular values for $I-\dnf(C[\tilde{S}_1,\tilde{S}_1])$ and 
 $I-\dnf(C[\tilde{S}_2,\tilde{S}_2])$ 
are $0.2354$ and $0.7063,$ respectively, and so no further iterations are performed.

Thus our algorithm produces the clusters $\{1,2,3\}, \{4,5\},$ and $ \{6,7,8\}.$ 
The left-iterative weight vector resulting from the algorithm is 
$$u=\left[\begin{array}{ccc|cc|ccc}
 0.5447&
0.5659&
0.2471&
0.4539&
0.3406& 
   0.2262&
0.5221&
0.1914
\end{array}
\right]^\top, $$ and the corresponding coupling matrix is 
$$ W_u(C) = 
\begin{bmatrix}
0.9991  &  0.0008  &  0.0001\\
0.0006   & 0.9993 &   0.0001\\
0.0001    &0.0000&    0.9999
\end{bmatrix}. $$
}
\end{example}


\subsection{Nearly decoupled implies small singular value}\label{subsec:nearlydecoupled=>smallsing}

In this subsection, we will show that if a stochastic matrix $T$ is a perturbation of the transition matrix of a \emph{completely decoupled} Markov chain (i.e., a direct sum of $k$ irreducible stochastic matrices), then $I - T$ has $k$ small singular values.  This suggests that small singular values of $I - T$ can be used as a heuristic to detect if $T$ has clusters.

The following lemma allows us to upper bound the operator norm of a stochastic matrix, which will in turn allow us to upper bound the operator norm of the perturbing matrix noted above.

\begin{lemma} \label{2norm} Let $T$ be a stochastic matrix of order $n$. Then $\sigma_1(T)\le \sqrt{n},$ with equality holding if and only if $T = \1 e_j^{\top}$ for some $j=1, \ldots, n.$  
	\end{lemma}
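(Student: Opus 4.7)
The plan is to combine the standard mixed-norm bound $\|T\|_2 \le \sqrt{\|T\|_1\,\|T\|_\infty}$ with the two norm constraints that stochasticity imposes on $T$. Since $T$ is nonnegative with every row summing to $1$, I have $\|T\|_\infty = 1$ immediately. For the $\ell_1$ operator norm, I would use a global count: summing the row sums gives $\1^\top T \1 = n$, so the total mass in $T$ is exactly $n$, and therefore the largest column sum, $\|T\|_1$, is at most $n$. Plugging into the mixed-norm inequality yields $\sigma_1(T) = \|T\|_2 \le \sqrt{n\cdot 1} = \sqrt{n}$.

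For the equality characterization, I would run the inequality chain backward. If $\sigma_1(T) = \sqrt{n}$, then $n \le \|T\|_1 \cdot \|T\|_\infty = \|T\|_1$, forcing $\|T\|_1 = n$. Since the column sums of $T$ are nonnegative and total $n$, this can happen only when a single column carries all the mass, i.e.\ some column $j$ has column sum $n$ while every other column sum is $0$. Because each entry of $T$ lies in $[0,1]$ (as rows are nonnegative and sum to $1$), a column sum equal to $n$ forces that column to be the all-ones vector, and the zero column sums force every other entry of $T$ to be $0$. Hence $T = \1 e_j^\top$.

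For the converse direction I would just verify the extremal case directly: with $T = \1 e_j^\top$, one computes $T^\top T = (e_j \1^\top)(\1 e_j^\top) = n\, e_j e_j^\top$, whose unique nonzero eigenvalue is $n$, giving $\sigma_1(T) = \sqrt{n}$.

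The argument is essentially mechanical once the mixed-norm bound is invoked; the only delicate point is the equality analysis, where one must be careful to use both the column-sum constraint (to locate a single column of mass $n$) and the entrywise bound $T_{ij}\le 1$ coming from row-stochasticity (to upgrade "column sum $n$" to "column equal to $\1$"). I do not foresee a real obstacle here.
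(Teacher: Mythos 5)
Your proof is correct, but it takes a genuinely different route from the paper's. The paper bounds $\sigma_1(T)^2$ by the \emph{Frobenius} norm: it writes $\sum_j \sigma_j^2 = \sum_j \|e_j^\top T\|_2^2$, notes that each row has $\ell_1$-norm $1$ and hence $\ell_2$-norm at most $1$, and concludes $\sigma_1^2 \le n$; equality then forces every row to be a standard basis vector \emph{and} $T$ to have rank $1$ (since the remaining singular values must vanish), whence $T = \1 e_j^\top$. You instead invoke the mixed-norm bound $\|T\|_2 \le \sqrt{\|T\|_1\,\|T\|_\infty}$ (a standard fact, e.g.\ via $\lambda_{\max}(T^\top T) \le \|T^\top T\|_\infty \le \|T\|_1\|T\|_\infty$), together with $\|T\|_\infty = 1$ and the column-sum bookkeeping $\1^\top T\1 = n$ giving $\|T\|_1 \le n$. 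Your equality analysis is also sound: $\|T\|_1 = n$ concentrates all the mass in one column, and the entrywise bound $t_{ij}\le 1$ forces that column to be $\1$. The two arguments are of comparable length and elementarity; the paper's version has the minor advantage of exposing the full singular value profile (all of $\sigma_2,\dots,\sigma_n$ must vanish at equality, which is what delivers rank one), while yours localizes the equality case entirely in the column sums and avoids any appeal to the lower singular values. Either proof would serve the paper's purposes, since downstream only the bound $\sigma_1(T)\le\sqrt n$ and the characterization $T=\1 e_j^\top$ are used.
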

\begin{proof}
	Denote the singular values of $T$ by $\sigma_1 \ge \ldots \ge \sigma_n.$  We have $\sum_{j=1}^n \sigma_j^2 = \sum_{j=1}^n \sum_{k=1}^n t_{j,k}^2 = \sum_{j=1}^n ||e_j^{\top} T||_2^2.$ Since $||e_j^{\top}T||_1 = 1,$ it follows that $||e_j^{\top} T||_2\le 1, j=1, \ldots, n,$ and note that $||e_j^{\top} T||_2= 1$ if and only if $e_j^{\top}T=e_k^{\top} $ for some $k$. 		Consequently, $\sigma_1^2 \le   \sum_{j=1}^n \sigma_j^2 \le n,$ which yields  $\sigma_1\le \sqrt{n}.$ 
		
		Suppose that   $\sigma_1 = \sqrt{n}.$ Then each row of $T$ contains a single 1, and since $\sigma_j=0, j=2, \ldots, n,$ $T$ is also rank 1. It now follows that $T = \1 e_j^{\top}$ for some $j=1, \ldots, n.$   Conversely, if $T = \1 e_j^{\top}$ for some $j$ then $\sigma_1=\sqrt{n}.$   
		\end{proof}
		
If a stochastic matrix is a perturbation of a completely decoupled stochastic matrix, then the perturbing matrix $E$ can be written as the difference of two stochastic matrices, and hence $||E||_2 \leq 2\sqrt n$ by Lemma~\ref{2norm}.  We can leverage this into a better bound as follows: blow up $E$ as much as possible such that it can still be written as the difference of two stochastic matrices.  We still get the same  bound of $2\sqrt n$ on the operator norm, but then we can divide by the blowup factor to get a better bound!  Lemmas~\ref{lemma:blowup} and~\ref{lemma:2normdiff} below formalize this approach.

\begin{lemma}\label{lemma:blowup}
Suppose that $T_1, T_2$ are two stochastic matrices of order $n,$ and let $E=T_1-T_2$. Then we can write $E$ as $E=\epsilon \tilde{E},$ where: $||\tilde{E}||_\infty = 2, 0 \le \epsilon := \frac{||E||_\infty}2 \le 1$ and $\tilde{E} = \tilde{T}_1-\tilde{T}_2$ for stochastic matrices $\tilde{T}_1, \tilde{T}_2.$
\end{lemma}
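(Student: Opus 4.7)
The plan is to scale $E$ up by $1/\epsilon$ (when $\epsilon>0$) to obtain $\tilde E$ with $\|\tilde E\|_\infty = 2$, and then exhibit $\tilde E$ explicitly as a difference of two stochastic matrices by splitting it into its positive and negative parts and padding each with a common nonnegative matrix to normalize the row sums. The first small observation is that $\epsilon \le 1$: each row of $E = T_1 - T_2$ sums to $0$ (since $T_1$ and $T_2$ both have row sums $1$), so within row $i$ the sum of the positive entries equals the sum of the absolute values of the negative entries, which gives $\sum_j |e_{ij}| = 2\sum_j \max(e_{ij},0) \le 2\sum_j (T_1)_{ij} = 2$, and hence $\|E\|_\infty \le 2$.

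Next, assuming $\epsilon > 0$, I would set $\tilde E := E/\epsilon$, which automatically has $\|\tilde E\|_\infty = \|E\|_\infty/\epsilon = 2$. Write $\tilde E = \tilde E^+ - \tilde E^-$ for the entrywise positive and negative parts, and let $r_i$ denote the common $i$th row sum of $\tilde E^+$ and $\tilde E^-$ (they agree by the row-sum-zero property of $\tilde E$). The key inequality is $r_i \le 1$ for every $i$, which is precisely what the normalization $\epsilon = \|E\|_\infty/2$ is designed to give: $r_i = \tfrac{1}{2\epsilon}\sum_j |e_{ij}| \le \tfrac{1}{2\epsilon}\|E\|_\infty = 1$. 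I would then pick any entrywise nonnegative matrix $D$ whose $i$th row sums to $1 - r_i$ (for instance, put $1-r_i$ in the first column and zeros elsewhere) and set $\tilde T_1 := \tilde E^+ + D$ and $\tilde T_2 := \tilde E^- + D$. Both matrices are nonnegative, both have every row summing to $1$, and their difference is $\tilde E$ by construction.

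The degenerate case $\epsilon = 0$ forces $E = 0$, in which case $E = 0\cdot \tilde E$ holds for any $\tilde E$, so it suffices to exhibit a single admissible one; for $n \ge 2$, take $\tilde T_1 := I$ and $\tilde T_2$ the permutation matrix swapping the first two coordinates, yielding $\|\tilde T_1 - \tilde T_2\|_\infty = 2$. There is no deep obstacle in this argument. The only point requiring care is to flag that the specific scaling $\epsilon = \|E\|_\infty/2$ is exactly what makes the bound $r_i \le 1$ hold for every row, so that a nonnegative padding matrix $D$ exists and $\tilde T_1, \tilde T_2$ come out stochastic rather than merely row-sum-$1$.
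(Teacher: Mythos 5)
Your proof is correct and follows essentially the same route as the paper's: both split $\tilde E = E/\epsilon$ into its positive and negative parts, observe the common row sums are at most $1$, and pad each part by placing the slack $1-r_i$ in the first column to produce the stochastic matrices $\tilde T_1, \tilde T_2$. Your write-up is in fact slightly more careful than the paper's on two minor points — the explicit justification that $\epsilon \le 1$ and the handling of the degenerate case $E=0$ (which the paper dismisses with a ``without loss of generality'').
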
 
\begin{proof}
	Without loss of generality we assume that $T_1 \ne T_2.$ 
	Define the following nonnegative matrices $\tilde{E}^+, \tilde{E}^-$ via  
$\tilde{E}^+_{j,k} = \max(\tilde{E}_{j,k}, 0), j,k=1, \ldots, n$ and 
$\tilde{E}^-_{j,k} = -\min(\tilde{E}_{j,k}, 0), j,k=1, \ldots, n.$ Then $\tilde{E}= \tilde{E}^+ - \tilde{E}^-$ and observe that for each $j=1, \ldots, n, 1 \ge  ||e_j^{\top} \tilde{E}^+||_1 =  ||e_j^{\top} \tilde{E}^-||_1.$ 

Now, we construct  $\tilde{T}_1, \tilde{T}_2$ as follows: for each $j=1, \ldots, n,$ we  set $e_j^{\top} \tilde{T}_1 = e_j^{\top} \tilde{E}^+ + (1-||e_j^{\top} \tilde{E}^+||_1)e_1^{\top}$ and $e_j^{\top} \tilde{T}_2 = e_j^{\top} \tilde{E}^- + (1-||e_j^{\top} \tilde{E}^-||_1)e_1^{\top}.$ It is now readily verified that $\epsilon, \tilde{E}, \tilde{T}_1, \tilde{T}_2$ have the desired properties. 
\end{proof}

		\begin{lemma}\label{lemma:2normdiff} Suppose that  $T_1, T_2$ are two stochastic matrices of order $n,$ and write  $E=T_1-T_2 = \epsilon\tilde E$, where $\epsilon = ||E||_\infty / 2$. Then $\sigma_1(E) \le 2 \epsilon \sqrt{n}. $ Equality holds if and only if $E = \epsilon \1(e_{j_1}^{\top}-e_{j_2}^{\top})$ for distinct indices $j_1, j_2.$ 
			\end{lemma}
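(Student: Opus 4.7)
The plan is to reduce the bound to Lemma~\ref{2norm} via Lemma~\ref{lemma:blowup}, using subadditivity of the operator 2-norm. First, Lemma~\ref{lemma:blowup} gives the decomposition $\tilde E = \tilde T_1 - \tilde T_2$ with $\tilde T_1, \tilde T_2$ stochastic of order $n$.  Since $\sigma_1(\cdot)$ is the operator 2-norm and hence subadditive,
\[
\sigma_1(\tilde E) \;=\; \sigma_1(\tilde T_1 - \tilde T_2) \;\le\; \sigma_1(\tilde T_1) + \sigma_1(\tilde T_2).
\]
Applying Lemma~\ref{2norm} bounds each term on the right by $\sqrt n$, giving $\sigma_1(\tilde E) \le 2\sqrt n$.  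Since $E = \epsilon \tilde E$ and the operator norm is absolutely homogeneous, $\sigma_1(E) = \epsilon\sigma_1(\tilde E) \le 2\epsilon\sqrt n$, which is the stated inequality.

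For the equality characterization, I would trace the equalities backwards through the two steps above.  Equality $\sigma_1(\tilde T_i) = \sqrt n$ in Lemma~\ref{2norm} forces $\tilde T_i = \1 e_{j_i}^\top$ for some $j_i \in \{1,\dots,n\}$, and substituting yields $\tilde E = \1(e_{j_1}^\top - e_{j_2}^\top)$, so $E = \epsilon \1(e_{j_1}^\top - e_{j_2}^\top)$.  The non-triviality $j_1 \ne j_2$ is forced because otherwise $\tilde E = 0$ and equality in the desired bound collapses to the trivial case $\epsilon\sqrt n = 0$.  The reverse direction (``if'') should reduce to a direct spectral computation: the matrix $\epsilon\1(e_{j_1}^\top - e_{j_2}^\top)$ is rank one, so its unique nonzero singular value is $\|\epsilon\1\|_2 \cdot \|e_{j_1} - e_{j_2}\|_2$, which I would compute and compare to the bound.

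The main obstacle I expect is the equality case, because the triangle inequality for the operator norm is generally \emph{strict} even when both summand norms are extremal: equality in $\sigma_1(A - B) = \sigma_1(A) + \sigma_1(B)$ requires a common unit vector witnessing both extremal singular values with the correct sign alignment.  So the argument that traces equality back from $\sigma_1(E) = 2\epsilon\sqrt n$ to the extremal form of the $\tilde T_i$ needs care.  One alternative I would try, if the triangle-inequality route proves too lossy, is to bound $\sigma_1(\tilde E)$ directly through the Frobenius norm, observing that each row of $\tilde E$ is a difference of two probability vectors and then analyzing when equality holds in $\sigma_1 \le \|\cdot\|_\F$ (rank one) together with the row-by-row bound.
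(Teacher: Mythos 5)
Your proof of the main inequality is exactly the paper's argument: invoke Lemma~\ref{lemma:blowup} to write $\tilde E = \tilde T_1 - \tilde T_2$ with $\tilde T_1,\tilde T_2$ stochastic, apply subadditivity of the operator $2$-norm, and bound each summand by $\sqrt n$ via Lemma~\ref{2norm}. That part is correct and complete.

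Your unease about the equality case is well founded, and the computation you propose for the ``if'' direction actually settles the matter \emph{against} the stated characterization. The matrix $E = \epsilon\1(e_{j_1}^\top - e_{j_2}^\top)$ with $j_1\ne j_2$ is rank one, so its only nonzero singular value is $\epsilon\,\|\1\|_2\,\|e_{j_1}-e_{j_2}\|_2 = \epsilon\sqrt n\cdot\sqrt2 = \epsilon\sqrt{2n}$ (equivalently, $EE^\top = 2\epsilon^2 J$, whose spectral radius is $2\epsilon^2 n$). Since $\epsilon\sqrt{2n} < 2\epsilon\sqrt n$ for all $n\ge 1$ and $\epsilon>0$, the candidate matrix does \emph{not} attain the bound, so the ``if'' direction cannot be proved --- it is false as stated, and the paper's own proof, which asserts the converse ``readily,'' is in error on this point. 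The ``only if'' direction that you and the paper both sketch is sound as a necessity argument (equality forces $\sigma_1(\tilde T_i)=\sqrt n$, hence $\tilde T_i = \1 e_{j_i}^\top$), but combining it with the computation above shows that equality in $\sigma_1(E)\le 2\epsilon\sqrt n$ is in fact never attained except in the degenerate case $E=0$; your remark that equality in the triangle inequality requires sign-aligned extremal vectors is precisely the obstruction. None of this affects the rest of the paper, since only the inequality (not the equality condition) is used in Theorem~\ref{small_sing}, but you should not expect to be able to close this part of the lemma as stated.
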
 
			\begin{proof}
				Write $\tilde{E} =  \tilde{T}_1-\tilde{T}_2,$ where $ \tilde{T}_1, \tilde{T}_2$ are as in Lemma \ref{lemma:blowup}. Then $\sigma_1(E) = \epsilon \sigma_1(\tilde{E}) \le \epsilon (\sigma_1(\tilde{T}_1) +\sigma_1(\tilde{T}_2)  ) \le 2 \epsilon \sqrt{n},$ the last inequality following from Lemma \ref{2norm}. 
				
				If $\sigma_1(E) = 2 \epsilon \sqrt{n},$ then again by Lemma \ref{2norm}, there are indices  $j_1, j_2$ such that $\tilde{T}_1 = \1 e_{j_1}^{\top}, \tilde{T}_2 = \1 e_{j_2}^{\top},$ so that $E = \epsilon \1(e_{j_1}^{\top}-e_{j_2}^{\top}).$ Conversely if $E = \epsilon \1(e_{j_1}^{\top}-e_{j_2}^{\top}),$ we find readily that 
				$\sigma_1(E) = 2 \epsilon \sqrt{n}.$
			\end{proof}

Our main result in this section shows that if we have a stochastic matrix that is a direct sum of $k$ irreducible stochastic matrices, then for any nearby stochastic matrix, the corresponding Laplacian matrix must have $k$ small singular values (one of which is necessarily $0$).

\begin{theorem}
	\label{small_sing}
	Suppose that $S$ is a stochastic matrix of order $n$ that is a direct sum of $k \ge 2$ irreducible stochastic matrices. Suppose that $T=S+E$ is another stochastic matrix, and write $E = \epsilon \tilde{E},$ where $\epsilon = ||E||_\infty / 2$. Then $I-T$ has $k$ singular values that are bounded above by $2 \epsilon \sqrt{n}.$ 
\end{theorem}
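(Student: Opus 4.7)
The strategy is to combine the structural fact that $I-S$ has exactly $k$ zero singular values with a standard Weyl-type perturbation bound for singular values, and then invoke Lemma~\ref{lemma:2normdiff} to control $\sigma_1(E)$.

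First, since $S$ is a direct sum of $k$ irreducible stochastic matrices, the Perron--Frobenius theorem guarantees that each summand has $1$ as a simple eigenvalue. Consequently $I-S$ is a direct sum of $k$ singular Laplacian matrices, each with a one-dimensional null space, so $\dim\ker(I-S) = k$. Equivalently,
\[\sigma_{n-k+1}(I-S) = \sigma_{n-k+2}(I-S) = \cdots = \sigma_n(I-S) = 0.\]

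Next, I would invoke Weyl's perturbation bound for singular values, which states that $|\sigma_i(A) - \sigma_i(B)| \le ||A-B||_2$ for all $i$. Applied to $A = I-T$ and $B = I-S$, for which $A - B = -E$, this gives
\[\sigma_i(I-T) \le \sigma_i(I-S) + \sigma_1(E) \qquad \text{for } i = 1, \ldots, n.\]
For each of the $k$ indices $i \in \{n-k+1, \ldots, n\}$ the first term on the right vanishes, so $\sigma_i(I-T) \le \sigma_1(E)$. Lemma~\ref{lemma:2normdiff} then supplies the bound $\sigma_1(E) \le 2\epsilon\sqrt{n}$, and the conclusion follows.

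The argument is essentially assembly, so I don't anticipate a significant obstacle. The only point requiring care is the identification of the $k$-dimensional null space of $I-S$ (via Perron--Frobenius applied to each irreducible block), followed by aligning the indexing so that the perturbation bound is applied to precisely the $k$ smallest singular values of $I-S$. As an alternative to Weyl's inequality, one could equally well use the min-max characterization $\sigma_{n-k+1}(I-T) = \min_{\dim V = k}\, \max_{x \in V,\, ||x||=1} ||(I-T)x||$, taking $V = \ker(I-S)$ and observing that $||(I-T)x|| = ||Ex|| \le \sigma_1(E)$ for unit $x \in V$; this gives the same bound for the $k$ smallest singular values.
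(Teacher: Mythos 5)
Your proposal is correct and rests on the same two ingredients as the paper's proof: the $k$-dimensional kernel of $I-S$ spanned by the (normalized) block indicator vectors, and the bound $\sigma_1(E)\le 2\epsilon\sqrt{n}$ from Lemma~\ref{lemma:2normdiff}. Packaging the conclusion via Weyl's singular-value perturbation inequality rather than by exhibiting the $k$ orthonormal vectors explicitly is only a cosmetic difference, and indeed the ``alternative'' min-max argument you sketch at the end is precisely the proof given in the paper.
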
 
\begin{proof}
	Without loss of generality, suppose that the leading $m \times m$ principal submatrix of $S$ is stochastic, and let $x_1=\frac{1}{\sqrt{m}} \left[\begin{array}{c}\1_m\\ 0_{n-m}\end{array}\right].$ Observe that $||(I-T)x_1||_2^2 =||(I-S-E)x_1||_2^2 = ||Ex_1||_2^2 \le \sigma_1(E)^2 \le 4 \epsilon^2 n,$ the last inequality by Lemma~\ref{lemma:2normdiff}. We find similarly that there are $k$ orthonormal vectors $x_1, \ldots, x_k$ such that $||(I-T)x_j||_2^2  \le 4 \epsilon^2 n, j=1, \ldots, k.$ The conclusion follows. 
\end{proof}


\begin{remark}\rm{
The best possible blowup factor in Lemma~\ref{lemma:blowup} is given by the \emph{Minkowski norm} (see, e.g.\ \cite[Chapter~2]{koldobsky2005fourier}) of the convex body $K_n := \{E \in \R^{n \times n} : E$ is the difference of two $n \times n$ sub-stochastic matrices$\}$.  This norm on $\R^{n \times n}$ by definition gives the maximum blowup of a matrix $E$ that still allows it to be written as the difference of two (sub)stochastic matrices.  It can be shown that if $E$ is the difference of two stochastic matrices, then its Minkowski norm is upper bounded by $||E||_\infty / 2$; hence, it yields bounds that are at least as good as those given in Lemma~\ref{lemma:2normdiff} and Theorem~\ref{small_sing}.  However, it is unclear at present whether using the Minkowski norm leads to any significantly improved guarantees in those results, so we state them in terms of $||\cdot||_\infty$ for the sake of exposition.}
\end{remark}

The following result provides a theoretical underpinning for the stopping criterion in step 2 of Algorithm \ref{alg}. 

\begin{theorem} \label{thm:frob_sig} Let $T$ be an $n \times n$ stochastic matrix, and let $\sigma$ denote the second smallest singular value of $I-T$. Let $S$ be the transition matrix of a  completely decoupled Markov chain on $n$ states. Then $||T-S||_{\F} \ge \sigma.$ 
\end{theorem}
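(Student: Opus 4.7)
The plan is to use the Courant--Fischer (min--max) characterization of singular values together with the fact that $\ker(I-S)$ has dimension at least $2$ whenever $S$ is completely decoupled. First I would observe that since $T$ is stochastic, $(I-T)\1=\0$, so the smallest singular value of $I-T$ is $0$, and $\sigma = \sigma_{n-1}(I-T)$. Applying Courant--Fischer to the symmetric matrix $(I-T)^\top(I-T)$, whose eigenvalues are the squared singular values of $I-T$, gives
\[\sigma^2 \;=\; \min_{\substack{V\subseteq\R^n\\ \dim V = 2}} \;\; \max_{\substack{x\in V\\ ||x||_2 = 1}} \, ||(I-T)x||_2^2 .\]

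Next I would exploit the structure of $S$. Because $S$ is a direct sum of $k\ge 2$ stochastic matrices, the indicator vectors of those $k$ blocks each satisfy $Sx=x$, and they are linearly independent, so $\dim\ker(I-S)\ge k\ge 2$. In particular one can pick a two-dimensional subspace $V\subseteq\ker(I-S)$ and use this specific $V$ as a feasible choice in the outer minimum above, producing an upper bound on $\sigma^2$. For any unit-norm $x\in V$ we have $(I-S)x=\0$, and hence
\[(I-T)x \;=\; (I-T)x - (I-S)x \;=\; (S-T)x,\]
which yields
\[||(I-T)x||_2 \;=\; ||(S-T)x||_2 \;\le\; ||S-T||_2 \;\le\; ||S-T||_{\F},\]
using the standard fact that the spectral norm is dominated by the Frobenius norm. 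Taking the maximum over unit $x \in V$ and using this $V$ in the outer minimum then gives $\sigma \le ||S-T||_{\F}$, as desired.

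The argument is essentially variational, so there is no substantial obstacle; the only care required is (i) to identify $\sigma$ as $\sigma_{n-1}(I-T)$ and set up the min--max with the correct subspace dimension of $2$, and (ii) to confirm the nullity bound $\dim\ker(I-S)\ge 2$ from the direct-sum structure of $S$. An alternative route would be to invoke Weyl's perturbation inequality for singular values in the form $\sigma_{n-1}(I-T)\le \sigma_{n-1}(I-S)+\sigma_1(T-S)$, where $\sigma_{n-1}(I-S)=0$ for the same nullity reason; this reaches the same conclusion via a slightly different packaging of the same underlying facts.
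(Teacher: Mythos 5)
Your proof is correct, and it reaches the conclusion by a noticeably more economical route than the paper's. Both arguments start from the same variational idea: bound $\sigma=\sigma_{n-1}(I-T)$ from above by testing the min--max characterization against a two-dimensional subspace of $\ker(I-S)$ (the paper's test vector $z$, together with $\1$, spans exactly such a subspace, since both block indicator vectors lie in $\ker(I-S)$). Where you diverge is in how the test is converted into a bound involving $S-T$: you use the identity $(I-T)x=(S-T)x$ for $x\in\ker(I-S)$ followed by $\|(S-T)x\|_2\le\|S-T\|_2\le\|S-T\|_{\F}$, which in fact establishes the stronger inequality $\sigma\le\|T-S\|_2$. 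The paper instead computes $(I-T)z$ explicitly in terms of the off-diagonal block row sums $T_{12}\1$ and $T_{21}\1$, and then lower-bounds $\|T-S\|_{\F}^2$ block by block using the optimality of the dangling node fix (Proposition~\ref{prop:dang}) together with Cauchy--Schwarz. That version is longer, but it makes visible exactly which entries of $T-S$ carry the Frobenius mass and ties the estimate to the DNF machinery used elsewhere in the paper; your version is shorter, more general in that it never needs the two-block reduction or the DNF, and both chains of inequalities are tight on the example following the theorem (there $T-S$ has rank one, so its spectral and Frobenius norms coincide). Your alternative packaging via Weyl's inequality for singular values, using $\sigma_{n-1}(I-S)=0$, is equally valid. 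The only point of hygiene is that ``direct sum'' should be read up to a simultaneous permutation of rows and columns, as in the paper's ``without loss of generality''; this does not affect the bound $\dim\ker(I-S)\ge 2$ on which your argument rests.
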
 
\begin{proof}
	Without loss of generality we assume that $S=S_1 \oplus S_2,$ where $S_1,S_2$ are stochastic matrices of orders $k, n-k,$ respectively (here we admit the possibility that $S_1 $ and $S_2$ may themselves be direct sums). Partition $T$ conformally with $S$ as $T=\left[\begin{array}{c|c}T_{11}&T_{12}\\ \hline T_{21}&T_{22}\end{array} \right].$ 
	
	Consider the vector $z = \frac{1}{\sqrt{nk(n-k)}}\left[\begin{array}{c}(n-k)\1 \\ \hline -k \1\end{array}\right],$ 
		where $z$ is partitioned conformally with $T$. Observe that $||z||_2=1, z^{\top}\1=0,$ and  $(I-T)z=\sqrt{\frac{n}{k(n-k)}}\left[\begin{array}{c}T_{12}\1 \\ \hline -T_{21} \1\end{array}\right],$ the last following from the fact that $(I-T_{11})\1 = T_{12}\1, (I-T_{22})\1=T_{21}\1.$ Hence we find that $$  \frac{n}{k(n-k)}(||T_{12}\1||_2^2 +||T_{21}\1||_2^2  )=||(I-T)z||_2^2 \ge \sigma^2 .$$

	Next we observe that $||T-S||_{\F}^2 = ||T_{11}-S_1||_{\F}^2 + ||T_{22}-S_2||_{\F}^2 + ||T_{12}||_{\F}^2 + ||T_{21}||_{\F}^2.   $ Letting $\tilde T_{11}, \tilde T_{22}$ denote the dangling node fixes for $T_{11}, T_{22}$ respectively, it follows from Proposition \ref{prop:dang} below that  
 	$||T_{11}-S_1||_{\F}^2 \ge ||T_{11}-\tilde T_{11}||_{\F}^2 = ||\frac{1}{k}T_{12}J||_{\F}^2 $ and  
	$||T_{22}-S_2||_{\F}^2 \ge ||T_{2}-\tilde T_{22}||_{\F}^2 = ||\frac{1}{n-k}T_{21}J||_{\F}^2 .$ 
	Further, we find from the Cauchy--Schwarz inequality that $||T_{12}||_{\F}^2 \ge \frac{1}{n-k}||T_{12}\1||_2^2$ and $||T_{21}||_{\F}^2 \ge \frac{1}{k}||T_{21}\1||_2^2.$ Assembling these observations, we have 
	\begin{eqnarray*}
	&&||T-S||_{\F}^2 \ge \\
	 &&||\frac{1}{k}T_{12}J||_{\F}^2+ ||\frac{1}{n-k}T_{21}J||_{\F}^2 + \frac{1}{n-k}||T_{12}\1||_2^2 + \frac{1}{k}||T_{21}\1||_2^2 = \\
	&&\left(\frac{1}{k} + \frac{1}{n-k}\right)(||T_{12}\1||_2^2 + ||T_{21}\1||_2^2 ) \ge \\	
&& \sigma^2,
	\end{eqnarray*}
	 as desired. 
\end{proof}

\begin{example}{\rm{
	Suppose that $k \in \naturals$ and that $a \in (0,\frac{1}{2}),$ and consider the stochastic matrix $T$ of order $2k$ given by $T=\left[\begin{array}{c|c} \frac{1-a}{k} J & \frac{a}{k}J \\ \hline \frac{a}{k}J & \frac{1-a}{k} J
	\end{array}\right],$ where the diagonal blocks are $k \times k.$ It is straightforward to determine that the singular values of $I-T$ are $1$ (with multiplicity $2k-2$), $2a$, and $0$. 
Next, consider the matrix $S=\left[\begin{array}{c|c} \frac{1}{k} J &0 \\ \hline 0 & \frac{1}{k} J
\end{array}\right],$ which is the transition matrix of a completely decoupled Markov chain. Then $T-S =  \left[\begin{array}{c|c} -\frac{a}{k} J &\frac{a}{k} J \\ \hline \frac{a}{k} J & -\frac{a}{k} J
\end{array}\right],$ which has Frobenius norm equal to $2a$. Thus we see from this example that for the inequality in Theorem \ref{thm:frob_sig} it is possible for equality to hold. }}
\end{example}

\subsection{Small singular value implies clustering}\label{subsec:smallsing=>clustering}


In this subsection we show that for an irreducible stochastic matrix $T,$ if $I-T$  has a small positive singular value, then the corresponding left singular vector can be used to generate: a) a partitioning of $T$, and b) weight vectors associated with the partition such that the resulting coupling matrix~\cite[Definition~4.6]{fritzsche2008svd} is guaranteed to have at least one large diagonal entry.  The following theorem states this formally:

\begin{theorem}\label{thm:small_sing} 
	Suppose that $T$ is a stochastic matrix of order $n$ with $1$ as a simple eigenvalue. Consider the singular value decomposition of $I-T, I-T=U \Sigma V^\top.$ Let $u,v$ be left and right singular vectors (respectively) corresponding to the singular value $\sigma >0.$ Suppose further that 
	$u^{\top}=\left[\begin{array}{c|c|c}u_1^{\top} & -u_2^{\top}& 0^{\top} \end{array} \right],$ where $u_1 \in \R^\ell, u_2 \in \R^{m-\ell},$ and both are positive vectors. Then  $$\max_{j = 1, 2}\omega_{|u|}(T; j, j) = \max \left \{ \frac{u_1^{\top}T_{11}\1}{u_1^{\top}\1} ,\frac{u_2^{\top}T_{22}\1}{u_2^{\top}\1} \right \} \ge 1 - \sigma \sqrt{m} \ge  1 - \sigma \sqrt{n}.$$
\end{theorem}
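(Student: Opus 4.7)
The plan is to exploit the SVD identity $u^{\top}(I-T) = \sigma v^{\top}$ in a localized form, by testing it against the indicator vector of each of the two positive/negative blocks of $u$. Partition $T$ conformally with $u$ as a $3 \times 3$ block matrix $T = [T_{ij}]_{i,j=1,2,3}$, and let $\chi_1 \in \R^n$ be the vector with $1$'s in positions $1, \ldots, \ell$ and $0$'s elsewhere. A direct block expansion gives
\[
u^{\top}(I-T)\chi_1 \;=\; u_1^{\top}\1 - u_1^{\top}T_{11}\1 + u_2^{\top}T_{21}\1 \;=\; \sigma v^{\top}\chi_1.
\]
Since $u_2 \ge 0$ and $T_{21}\ge 0$, the term $u_2^{\top}T_{21}\1$ is nonnegative, and by Cauchy--Schwarz $|v^{\top}\chi_1| \le \|v\|_2 \|\chi_1\|_2 = \sqrt{\ell}$. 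Rearranging and estimating yields
\[
u_1^{\top}T_{11}\1 \;\ge\; u_1^{\top}\1 - \sigma\sqrt{\ell},
\]
so dividing by $u_1^{\top}\1 > 0$ produces $\omega_{|u|}(T;1,1) \ge 1 - \sigma\sqrt{\ell}/(u_1^{\top}\1)$. The symmetric calculation obtained by testing against the indicator $\chi_2$ of the second block (where $u^{\top}\chi_2 = -u_2^{\top}\1$, and the favorable cross-term is now $u_1^{\top}T_{12}\1$) analogously gives $\omega_{|u|}(T;2,2) \ge 1 - \sigma\sqrt{m-\ell}/(u_2^{\top}\1)$.

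To finish I would combine two elementary observations: (i) for any nonnegative vector $w$, $w^{\top}\1 = \|w\|_1 \ge \|w\|_2$; and (ii) since the third block of $u$ is zero and $\|u\|_2 = 1$, one has $\|u_1\|_2^2 + \|u_2\|_2^2 = 1$. A one-line case split then suffices. If $\|u_1\|_2^2 \ge \ell/m$, then $u_1^{\top}\1 \ge \|u_1\|_2 \ge \sqrt{\ell/m}$, which forces the first bound to be at least $1 - \sigma\sqrt{m}$; otherwise $\|u_2\|_2^2 > (m-\ell)/m$, and the analogous reasoning applied to the second bound does the job. Taking the maximum over $j = 1, 2$ and using $\sqrt{m} \le \sqrt{n}$ completes the proof.

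The only non-routine point is sign bookkeeping: because $u$ has $-u_2$ in its middle block and we are working with $I - T$ rather than $T$, the cross term $u_2^{\top}T_{21}\1$ ends up appearing with a $+$ sign, which is exactly what allows it to be dropped in the lower bound. Once that is verified, the rest is a short application of Cauchy--Schwarz together with the $\ell_1$--$\ell_2$ norm inequality and the unit-norm constraint on $u$.
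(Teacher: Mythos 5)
Your proposal is correct, and its core mechanism is the same as the paper's: expand $u^{\top}(I-T)=\sigma v^{\top}$ blockwise, drop the nonnegative cross term ($u_2^{\top}T_{21}\1$, resp.\ $u_1^{\top}T_{12}\1$), and apply Cauchy--Schwarz to bound $v^{\top}\chi_j$ by $\sqrt{\ell}$ or $\sqrt{m-\ell}$, arriving at $\omega_{|u|}(T;1,1)\ge 1-\sigma\sqrt{\ell}/(u_1^{\top}\1)$ and $\omega_{|u|}(T;2,2)\ge 1-\sigma\sqrt{m-\ell}/(u_2^{\top}\1)$. Where you genuinely diverge is the final step. The paper establishes $\max\bigl\{u_1^{\top}\1/\sqrt{\ell},\,u_2^{\top}\1/\sqrt{m-\ell}\bigr\}\ge 1/\sqrt{m}$ via a separate optimization lemma (Lemma~\ref{max_lem}), whose proof uses an exchange argument to reduce to the case where $x$ and $y$ each have at most one positive entry. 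You instead observe that $\|u_1\|_2^2+\|u_2\|_2^2=1$ forces $\|u_1\|_2^2\ge\ell/m$ or $\|u_2\|_2^2\ge(m-\ell)/m$, and then use $w^{\top}\1=\|w\|_1\ge\|w\|_2$ for nonnegative $w$ to conclude. This is a strictly more elementary route to the same inequality: it avoids the variational lemma entirely and makes transparent exactly which feature of $u$ (unit norm split across the two blocks) drives the $\sqrt{m}$ factor. The trade-off is that the paper's lemma identifies the extremal configuration and shows the constant $1/\sqrt{m}$ is sharp as a min--max value, information your pigeonhole argument does not recover; but for proving the theorem as stated, your version is cleaner.
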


To prove Theorem~\ref{thm:small_sing}, we will need the following technical lemma:

\begin{lemma}\label{max_lem}
	Suppose that $\ell, m \in \naturals$ with $1 \le \ell \le m-1.$ Then
	\begin{eqnarray*}
	& \min& \left \{ \max \left \{ \frac{x^{\top}\1}{\sqrt{\ell}}, \frac{y^{\top} \1}{\sqrt{m-\ell}} \right \} \Bigg| x \in \R^\ell, y \in \R^{m-\ell}, x, y \ge 0, x^{\top}x+y^{\top}y=1 \right \}\\ 
	 &=&\frac{1}{\sqrt{m}}.
	 \end{eqnarray*}
\end{lemma}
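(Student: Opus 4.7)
The plan is to prove both inequalities (``$\geq$'' and ``$\leq$'') directly, relying on two simple ingredients: (i) that $\max(a^2,b^2)$ dominates any convex combination of $a^2$ and $b^2$, and (ii) that for a \emph{nonnegative} vector $x$ one has $(x^\top \1)^2 \geq \|x\|_2^2$ (because squaring $\sum x_i$ adds only nonnegative cross terms to $\sum x_i^2$).

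For the lower bound, let $a = x^\top\1/\sqrt{\ell}$ and $b = y^\top\1/\sqrt{m-\ell}$. I would weight the two squared quantities by $\ell/m$ and $(m-\ell)/m$ to get
\[
\max(a,b)^2 \;\geq\; \tfrac{\ell}{m}\,a^2 + \tfrac{m-\ell}{m}\,b^2 \;=\; \frac{(x^\top\1)^2 + (y^\top\1)^2}{m}.
\]
Applying the nonnegativity inequality $(x^\top\1)^2 \geq \|x\|_2^2$ (and similarly for $y$) then yields
\[
\max(a,b)^2 \;\geq\; \frac{\|x\|_2^2 + \|y\|_2^2}{m} \;=\; \frac{1}{m},
\]
which gives the ``$\geq$'' direction. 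Alternatively, one can argue by contradiction: if both $a$ and $b$ were strictly smaller than $1/\sqrt m$, then $\|x\|_2 \leq x^\top\1 < \sqrt{\ell/m}$ and likewise $\|y\|_2 < \sqrt{(m-\ell)/m}$, forcing $\|x\|_2^2 + \|y\|_2^2 < 1$.

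For the matching upper bound, I would exhibit minimizers that realize equality in both of the above inequalities simultaneously. The equality case in $(x^\top\1)^2 \geq \|x\|_2^2$ for $x\geq 0$ forces $x$ to have at most one nonzero entry, so I would take $x = \sqrt{\ell/m}\,e_1 \in \R^{\ell}$ and $y = \sqrt{(m-\ell)/m}\,e_1 \in \R^{m-\ell}$; a direct check gives $\|x\|^2 + \|y\|^2 = 1$, $a = b = 1/\sqrt{m}$, confirming that the infimum is attained.

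There is no real obstacle here; the only subtlety is choosing the convex combination of $a^2,b^2$ with the ``correct'' weights $\ell/m$ and $(m-\ell)/m$ so that the cancellation with the factors of $\sqrt{\ell}$ and $\sqrt{m-\ell}$ in the denominators is clean. With that weighting, the argument is a two-line calculation plus a one-line construction of the extremizer.
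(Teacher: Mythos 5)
Your proof is correct, and it takes a genuinely different route from the paper's. The paper argues by reduction: it shows that any $x$ with two positive entries can be replaced by a vector of the same $2$-norm but strictly smaller sum (a mass-concentration move), so the minimum is attained when each of $x,y$ has at most one positive entry; this collapses the problem to a one-parameter optimization $\min_{\alpha\in[0,1]}\max\{\alpha/\sqrt{\ell},\sqrt{1-\alpha^2}/\sqrt{m-\ell}\}$, which is then evaluated. You instead prove the lower bound in one shot: since $\max(a,b)^2$ dominates the convex combination $\tfrac{\ell}{m}a^2+\tfrac{m-\ell}{m}b^2=\tfrac{1}{m}\bigl((x^\top\1)^2+(y^\top\1)^2\bigr)$, and $(x^\top\1)^2\ge \|x\|_2^2$ for nonnegative $x$, the constraint $\|x\|_2^2+\|y\|_2^2=1$ immediately gives $\max(a,b)\ge 1/\sqrt{m}$; you then exhibit the extremizer $x=\sqrt{\ell/m}\,e_1$, $y=\sqrt{(m-\ell)/m}\,e_1$ to match. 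Both arguments are sound and identify the same minimizers, but yours sidesteps the reduction step (and the implicit appeal to attainment of the minimum on the constraint set) and makes the two sources of slack --- the gap in the convex-combination inequality and the gap in $(x^\top\1)^2\ge\|x\|_2^2$ --- explicit, so the equality analysis falls out for free. The paper's reduction, on the other hand, has the minor virtue of explaining structurally \emph{why} the worst case concentrates all mass on a single coordinate in each block.
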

\begin{proof}
	Suppose that $x \in \R^\ell$ with $e_1^{\top}x=x_1>0, e_2^{\top}x= x_2 >0.$ Let $\tilde{x} =\sqrt{\frac{x^{\top}x}{x^{\top}x+ 2x_1x_2}}(x+ x_2(e_1-e_2)).$ Then $\tilde{x}^{\top}\tilde{x}= x^{\top}x$ and $\tilde{x}^{\top}\1 < x^{\top}\1.$ It follows that in order to attain our desired minimum, we can assume without loss of generality that each of $x,y$ has at most one positive entry. Hence for some $\alpha \in [0,1]$ we have $x=\alpha e_i, y=\sqrt{1-\alpha^2} e_j$ for some indices $i,j.$ It now follows that
	\begin{eqnarray*}
	&& \min \left \{ \max \left \{ \frac{x^{\top}\1}{\sqrt{\ell}}, \frac{y^{\top} \1}{\sqrt{m-\ell}} \right \} \Bigg| x \in \R^\ell, y \in \R^{m-\ell}, x, y \ge 0, x^{\top}x+y^{\top}y=1 \right \} = \\
	&&
	\min \left \{ \max \left \{ \frac{\alpha}{\sqrt{\ell}}, \frac{\sqrt{1-\alpha^2}}{\sqrt{m-\ell}} \right \} \Bigg| \alpha \in [0,1]  \right \}.
		\end{eqnarray*}
	 This last is readily seen to equal $\frac{1}{\sqrt{m}}.$
\end{proof}

\begin{proof}[Proof of Theorem~\ref{thm:small_sing}]
	Before beginning the proof, we note  that the subvector of zeros in the partitioning of $u$ maybe absent. In that case, the theorem and proof go  through with a partitioning into just two subsets, and $m=n.$

   Partition  $T$ and $v$  conformally with  $u$, as   
\begin{equation}\label{3part}
T=\left[\begin{array}{c|c|c} 
T_{11}& T_{12}&T_{13}\\ \hline T_{21}& T_{22}&T_{23}\\ \hline T_{31}& T_{32}&T_{33}
\end{array}\right], v=\left[\begin{array}{c} v_1\\ v_2 \\ v_3 \end{array} \right]. 
\end{equation} 
Since $u^{\top} (I-T) = \sigma v^{\top},$ we find that $u_1^{\top}(I-T_{11})+ u_2^{\top}T_{21} = \sigma v_1^{\top}.$ Hence, $u_1^{\top}(I-T_{11}) \le \sigma v_1^{\top}, $ and we deduce that 
$u_1^{\top} \1 - u_1^{\top} T_{11}\1 \le \sigma v_1^{\top}\1,$ which yields $\frac{u_1^{\top} T_{11}\1}{u_1^{\top} \1} \ge 1 - \sigma \frac{v_1^{\top} \1}{u_1^{\top} \1}.$ Similarly we find that  $\frac{u_2^{\top} T_{22}\1}{u_2^{\top} \1} \ge 1 - \sigma \frac{v_2^{\top} \1}{u_2^{\top} \1}.$  
Since $v_1 \in \R^\ell, v_2 \in \R^{m-\ell},$ it follows from the Cauchy--Schwarz 
 inequality that  
 $\frac{u_1^{\top}T_{11}\1}{u_1^{\top}\1} \ge 1 - \sigma \frac{\sqrt{\ell}}{u_1^{\top}\1}$ and   $\frac{u_2^{\top}T_{22}\1}{u_2^{\top}\1} \ge  1 - \sigma \frac{\sqrt{m-\ell}}{u_2^{\top}\1}.$ From  Lemma \ref{max_lem}, we find  that  $\max \left \{ \frac{u_1^{\top}\1}{\sqrt{\ell}},  \frac{u_2^{\top}\1}{\sqrt{m-\ell}}   \right \} \ge \frac{1}{\sqrt{m}},$ and hence  
$\min \left \{\frac{\sqrt{\ell}}{u_1^{\top}\1}  , \frac{\sqrt{m-\ell}}{u_2^{\top}\1}  \right  \} \le \sqrt{m}.$ We deduce that $\max \left \{ \frac{u_1^{\top}T_{11}\1}{u_1^{\top}\1} ,\frac{u_2^{\top}T_{22}\1}{u_2^{\top}\1} \right \} \ge 1 - \sigma \sqrt{m} \ge  1 - \sigma \sqrt{n},$ as desired. 
\end{proof}

\begin{remark}{\rm{
In the context of Theorem \ref{thm:small_sing}, we may produce an associated coupling matrix of order $n-m+2$ as follows. Partition $T$ by taking the first $\ell$ indices as one subset $S_1$, the next $m-\ell$ indices as a second subset $S_2$, and the remaining  $n-m$ indices as singletons. The weight vector associated with $S_1$ is $\frac{1}{u_1^{\top}\1}u_1,$ the weight vector associated with $S_2$ is $\frac{1}{u_2^{\top}\1}u_2,$ and the weight vectors for the singleton subsets are all $1$. The corresponding coupling matrix is then equal to
$$C=\left[\begin{array}{ccc}\frac{u_1^{\top}  T_{11}\1}{u_1^{\top}\1}& \frac{u_1^{\top}  T_{12}\1}{u_1^{\top}\1}& \frac{1}{u_1^{\top}\1} u_1^{\top}T_{13}\\
\frac{u_2^{\top}  T_{21}\1}{u_2^{\top}\1}& \frac{u_2^{\top}  T_{22}\1}{u_2^{\top}\1}& \frac{1}{u_2^{\top}\1} u_2^{\top}T_{23} \\ 
 T_{31}\1& T_{32}\1&T_{33}
\end{array}\right].
$$
Theorem \ref{thm:small_sing} then guarantees that at least one of the first two diagonal entries of $C$ is bounded below by $1 - \sigma \sqrt{n}.$ In the special case that $u$ has no zero entries, our coupling matrix is $2 \times 2$ and equal to 
$$\left[\begin{array}{cc}\frac{u_1^{\top}  T_{11}\1}{u_1^{\top}\1}& \frac{u_1^{\top}  T_{12}\1}{u_1^{\top}\1}\\
\frac{u_2^{\top}  T_{21}\1}{u_2^{\top}\1}& \frac{u_2^{\top}  T_{22}\1}{u_2^{\top}\1} 
\end{array}\right], 
$$
with the same lower bound on the maximum of the  diagonal entries. }}
\end{remark}

The next example shows that while one of the diagonal entries in the coupling matrix is guaranteed to exceed $ 1 - \sigma \sqrt{n},$ it can be the case that the other diagonal entry in the coupling matrix is less than $ 1 - \sigma \sqrt{n}.$

\begin{example}\label{ex:smalldiag}{\rm{
Consider the following stochastic matrix of order $n \ge 4$: $$	
T = \left[ \begin{array}{c|c|c} 
1-\epsilon & \epsilon & 0^{\top}\\ \hline 0&1-\delta&\frac{\delta}{n-2} \1^{\top} \\ 
\hline x\1&y\1&(1-x-y)I
\end{array}\right].$$
 Here we consider the case that $0< \delta \ll \epsilon \ll x < y$, and  $x+y<1$.  It can be shown that the smallest nonzero singular value of $I - T$ is $\sigma_{n-1} = 
\frac{\epsilon (x+y)\sqrt{n}}{\sqrt{(n-1)(x^2+y^2)+2xy}} + O(\epsilon^2)$, with corresponding left singular vector $$\left[ \begin{array}{c} 1\\
-\delta(nx(x+y)-\sigma_{n-1}^2  )\\ 
-[\delta^2(\frac{x+(n-1)y}{n-2}) + (y-x)(\sigma_{n-1}^2 - \delta^2(\frac{n-1}{n-2})) ]\1
\end{array}\right].$$
Hence, the $(2, 2)$ entry of the coupling matrix is $\mu = 1-x+O(\epsilon^2).$ On the other hand, $$1-\sqrt{n} \sigma_{n-1} = 1 - \frac{\epsilon (x+y)n}{\sqrt{(n-1)(x^2+y^2)+2xy}} + O(\epsilon^2).$$ Since  $\epsilon$ is small, 
$1-x < 1 - \frac{\epsilon (x+y)n}{\sqrt{(n-1)(x^2+y^2)+2xy}} + O(\epsilon^2),$ so that $\mu < 
1-\sqrt{n} \sigma_{n-1}. $  See Appendix~\ref{sec:smalldiag} for details}}
\end{example}

\begin{remark}{\rm{ Maintaining the notation of Theorem \ref{thm:small_sing}, we now derive lower bounds on the spectral radii of $T_{11}$ and $T_{22}$.  We first remark that, as discussed in~\cite[Appendix~6.1]{CKS}, a large spectral radius of one of these submatrices is evidence of clustering.
Let $x_1, x_2$ denote Perron vectors of $T_{11}$ and $T_{22}$ respectively, normalised so that $u_1^{\top}x_1=1, u_2^{\top}x_2=1.$ Denote the spectral radii of $T_{11} $ and $T_{22}$ by $\rho_1, \rho_2,$ respectively. Referring to the proof of Theorem \ref{thm:small_sing}, we have  $u_1^{\top}T_{11} \ge u_1^{\top}-  \sigma v_1^{\top} $ and 
$u_2^{\top}T_{22} \ge u_2^{\top}-  \sigma v_2^{\top}.$ Multiplying on the right by $x_1, x_2$ respectively now yields 
\begin{eqnarray*}
\rho_1 = u_1^{\top}T_{11}x_1 \ge u_1^{\top}x_1-  \sigma v_1^{\top}x_1 = 1- \sigma v_1^{\top}x_1,\\
\rho_2 = u_2^{\top}T_{22}x_2 \ge u_2^{\top}x_2-  \sigma v_2^{\top}x_2 = 1- \sigma v_2^{\top}x_2. 
\end{eqnarray*}
In particular, if $\sigma$ is sufficiently small and $v_1^{\top}x_1, v_2^{\top}x_2$ are not too large, then $\rho_1$ and $\rho_2$ are close to $1$.
}}
\end{remark}

\subsection{Diagonally dominant coupling matrix implies nearly decoupled}\label{subsec:couplingmatrix}

In this section, we show that if a Markov chain is nearly decoupled in the sense of~\cite[Definition~4.6]{fritzsche2008svd}, then it is nearly decoupled in the sense discussed in Section~\ref{subsec:nearlydecoupled=>smallsing}.  The following lemma shows that the transition matrix of such a Markov chain is a perturbation of a direct sum of \emph{sub-stochastic} matrices.


\begin{lemma}\label{lemma:sub-stochastic}
Let $T$ be nearly decoupled in the sense of~~\cite[Definition~4.6]{fritzsche2008svd}, i.e.\ $T = [T_{kl}]_{k, l = 1}^m$ (after permuting the rows and columns), with $\omega_{\1}(T; k, k) \geq 1 - \delta$ for $k = 1, \ldots, m$ for some $\delta \in (0, 1)$.  Then $T = B + E$, where $B = B_1 \oplus \ldots \oplus B_m$, $B_k$ is sub-stochastic with average row sum $\geq 1 - \delta$, and $E$ is small: specifically,
\begin{enumerate}
\item	$||E||_{\mathrm F} \leq \min\{\sqrt{\delta n}, \delta n\}$.
\item	$||E||_\infty \leq \min\{1, \delta n\}$.
\end{enumerate}
\end{lemma}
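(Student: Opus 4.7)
My plan is to take the block-diagonal part of $T$ itself as $B$: define $B_k := T_{kk}$ and $B := B_1 \oplus \cdots \oplus B_m$. Since $T$ is stochastic, each row sum of $T_{kk}$ is at most $1$, so $B_k$ is sub-stochastic; moreover, its average row sum is exactly $\omega_{\1}(T; k, k) \ge 1 - \delta$ by hypothesis. So the structural part of the lemma is immediate, and everything reduces to bounding the error matrix $E := T - B$, which consists of the off-diagonal blocks $T_{kl}$, $k \neq l$, with zeros on the diagonal blocks. In particular $E \ge 0$, which will let me bound Frobenius by $\ell_\infty$ times the total mass.

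The two scalar quantities I need are the max row sum of $E$ and the total mass $\sum_{i,j} E_{ij}$. First, a row of $E$ whose index lies in block $k$ sums to $1$ minus the corresponding row sum of $T_{kk}$, hence lies in $[0,1]$. Second, writing $n_k$ for the size of block $k$, stochasticity of $T$ gives $\sum_{l} \1^\top T_{kl} \1 = n_k$, so
\[
\sum_{l \neq k} \1^\top T_{kl}\1 \;=\; n_k - \1^\top T_{kk}\1 \;=\; n_k\bigl(1-\omega_\1(T;k,k)\bigr) \;\le\; n_k\delta.
\]
Summing over $k$, the total mass of $E$ is at most $n\delta$. In particular, the row sums of $E$ within block $k$ sum to at most $n_k\delta \le n\delta$, so no single row sum exceeds $n\delta$, which combined with the trivial bound above yields $||E||_\infty \le \min\{1, n\delta\}$. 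That is bound~2.

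For the Frobenius bound, I would combine these two estimates using the pointwise inequality $E_{ij}^2 \le ||E||_\infty \cdot E_{ij}$, which holds because $E \ge 0$ and each entry is bounded by its row sum:
\[
||E||_{\F}^2 \;\le\; ||E||_\infty \sum_{i,j}E_{ij} \;\le\; \min\{1,n\delta\}\cdot n\delta \;=\; \min\{n\delta,\,(n\delta)^2\}.
\]
Taking square roots gives $||E||_{\F} \le \min\{\sqrt{n\delta}, n\delta\}$, which is bound~1.

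I do not foresee a serious obstacle; the argument is essentially bookkeeping from the definition of the coupling matrix and stochasticity of $T$. The one point that requires a little care is the $\min$-form of the Frobenius bound: the naive estimate $||E||_{\F}^2 \le \sum E_{ij} \le n\delta$ gives only $\sqrt{n\delta}$, and the naive $||E||_{\F} \le \sqrt{n}\,||E||_\infty \le \sqrt{n}\cdot n\delta$ is too weak. It is the combination $E_{ij}^2 \le (\max_{i,j} E_{ij})\,E_{ij} \le ||E||_\infty\,E_{ij}$, which exploits both the row-sum bound and the total-mass bound simultaneously, that produces the stronger $\min\{\sqrt{n\delta}, n\delta\}$ form and matches bound~2 when $n\delta$ is small.
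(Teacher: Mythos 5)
Your proof is correct and uses the same decomposition as the paper: $B_k := T_{kk}$, $E$ the off-diagonal blocks, governed by the same two constraints ($E \geq 0$ with row sums at most $1$, and block-row mass $\1^\top E_k \1 \leq \delta n_k$); the $\|\cdot\|_\infty$ bound is obtained exactly as in the paper. Where you genuinely diverge is the Frobenius bound. The paper maximizes $||E_k||_{\F}^2$ block-row by block-row via an explicit extremal configuration (a case analysis on whether $\delta n_k \geq 1$, placing single $1$'s in $\lfloor \delta n_k \rfloor$ rows, and so on), obtaining $||E_k||_{\F}^2 \leq \min\{\delta n_k, \delta^2 n_k^2\}$ and summing, which in passing yields the slightly sharper intermediate bound $||E||_{\F} \leq \delta\sqrt{n_1^2 + \cdots + n_m^2}$ before weakening to $\delta n$. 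Your pointwise inequality $E_{ij}^2 \leq ||E||_\infty E_{ij}$ (valid since $E \geq 0$ and each entry is at most its row sum) reaches the stated bound in one line with no case analysis; the only thing you give up is that unused sharper per-block form, which you could recover by applying the same inequality block-row by block-row with $||E_k||_\infty \leq \min\{1, \delta n_k\}$. Either way the lemma as stated follows.
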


\begin{proof}
Simply let $B_1, \ldots, B_m$ be the diagonal blocks of $T$, and let $E$ be the off--diagonal blocks (with 0s on the diagonal blocks).  I.e., $B_k := T_{kk}$ and $E = T - B$.  The bound on the average row sum of $B_k$ follows easily from the definition of $\omega_{\1}$.

Now let $S_1, \ldots, S_n \subseteq [n]$ be the sets of indices corresponding to $T_{11}, \ldots, T_{mm}$, and let $n_k := |S_k|$.  Then by definition of $\omega_{\1}$, we have
\[\omega_{\1}(B; k, k) = \frac1{n_k}\1_{S_k}^\top T\1_{S_k} \geq 1 - \delta\]
and 
\begin{equation}\label{eq:diagdom}
\sum_{l \neq k}\omega_{\1}(T; k, l) = \frac1{n_k}\sum_{i \in S_k, j \notin S_k}t_{ij} = 1 - \omega_{\1}(B; k, k) \leq \delta.
\end{equation}
Now let $E_k := [T_{kl}]_{l \neq k} \in \R^{n_k \times (n - n_k)}$.  Then $\1^\top E_k\1 \leq \delta n_k$ by~\eqref{eq:diagdom}, and of course $E_k\1 \leq \1$.

\begin{enumerate}
\item	Let us maximize $||E_k||_\F^2$ subject to the above constraints.  First consider the case when $\delta n_k \geq 1$.  $||E_k||_\F^2$ is maximized by putting a single 1 in $\lfloor\delta n_k\rfloor$ rows of $E_k$, setting one entry of an additional row to $\delta n_k - \lfloor\delta n_k\rfloor$ if necessary, and setting all remaining entries to 0.  Thus, we get $||E_k||_\F^2 \leq \delta n_k \leq \delta^2 n_k^2$.  If $\delta n_k \leq 1$, then $||E_k||_\F^2$ is maximized by setting a single entry of $E_k$ to $\delta n_k$ and the rest 0.  In this case we get $||E_k||_\F^2 \leq \delta^2n_k^2 \leq \delta n_k$.  In both cases we get $||E_k||_\F^2 \leq \min\{\delta n_k, \delta^2 n_k^2\}$.

Now, since $||E||_\F^2 = \sum_{k = 1}^m||E_k||_\F^2$ and $n = \sum_{k = 1}^mn_k$, we have
\[||E||_\F^2 \leq \min\{\delta n, \delta^2(n_1^2 + \ldots + n_m^2)\},\]
hence
\[||E||_\F \leq \min\{\sqrt{\delta n}, \delta\sqrt{n_1^2 + \ldots + n_m^2}\} \leq \min\{\sqrt{\delta n}, \delta n\}.\]

\item	Now let us maximize $||E_k||_\infty$ subject to $\1^\top E_k\1 \leq \delta n_k$ and $E_k\1 \leq \1$.  As $E_k \geq 0$ we have $||E_k||_\infty \leq \1^\top E_k\1 \leq \delta n_k$, and as $E_k\1 \leq \1$ we have $||E_k||_\infty \leq 1$.  Hence, $||E_k||_\infty \leq \min\{1, \delta n_k\}$.  As $||E||_\infty = \max_{k = 1}^m ||E_k||_\infty$, we have
\[||E||_\infty \leq \min\left\{1, \delta \cdot \max_{k = 1}^mn_k\right\} \leq \min\{1, \delta n\}.\qedhere\]
\end{enumerate}
\end{proof}

\begin{remark}\rm{
The results in this section require \emph{all} diagonal entries of the coupling matrix $W_\1$ to be large.  This is consistent with the notion of clustering defined in~\cite[Definition~4.6]{fritzsche2008svd}.

}
\end{remark}

The above result has the limitation that the diagonal blocks are sub-stochastic, not stochastic.  The following theorem shows that with slight modification we can make the diagonal blocks stochastic.

\begin{theorem}\label{thm:directsum_stochastic}
Let $T$ be nearly decoupled in the sense of~\cite[Definition~4.6]{fritzsche2008svd}, i.e.\ $T = [T_{kl}]_{k, l = 1}^m$ (after permuting the rows and columns), with $\omega_{\1}(T; k, k) \geq 1 - \delta$ for $k = 1, \ldots, m$ for some $\delta \in (0, 1)$.  Let $S_k$ denote the set of indices corresponding to $T_{kk}$, $n_k := |S_k|$, and let $r_k$ be the minimum row sum of $T_{kk}$.  Then we can write $T = B + E$, where $B$ is a direct sum of stochastic matrices $B_1, \ldots, B_m$, and 
\begin{enumerate}
\item	$||E||_\infty \leq 2 \cdot \min\{1, \delta n\}$.
\item	$||E||_\F \leq \min\{2\delta n, \sqrt{\delta^2n^2 + \delta m}, \sqrt{\delta n + \delta m}\}$.
\end{enumerate}
\end{theorem}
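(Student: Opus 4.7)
The plan is to take the natural choice $B_k := \dnf(T_{kk})$, the dangling node fix of each diagonal block. Since $\dnf$ turns any sub-stochastic matrix into a stochastic one, $B := B_1 \oplus \cdots \oplus B_m$ is a direct sum of stochastic matrices, as required. The perturbation matrix $E := T - B$ then has a very explicit structure: on off-diagonal blocks it agrees with $T$, while on the $k$-th diagonal block it equals $T_{kk} - \dnf(T_{kk}) = -\frac{1}{n_k}(J - T_{kk}J)$. Writing $a_i := 1 - (T_{kk}\1)_i$ for $i \in S_k$, row $i$ of $E[S_k, S_k]$ is just $-\frac{a_i}{n_k}\1^\top$. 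The hypothesis $\omega_{\1}(T; k, k) \geq 1 - \delta$ translates precisely to $\sum_{i \in S_k} a_i \leq \delta n_k$, and trivially $0 \leq a_i \leq 1$. These two inequalities will drive everything.

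For the $\ell_\infty$ bound, I would simply compute the $\ell_1$ norm of row $i \in S_k$ of $|E|$: the diagonal block contributes $\sum_{j \in S_k} \frac{a_i}{n_k} = a_i$, and the off-diagonal portion contributes $\sum_{j \notin S_k} t_{ij} = 1 - (T_{kk}\1)_i = a_i$. Hence each row of $|E|$ sums to $2a_i$, and since $a_i \leq \min\{1, \delta n_k\} \leq \min\{1, \delta n\}$, part (1) follows immediately.

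For the Frobenius bound I would split $\|E\|_\F^2$ into the diagonal contribution $\sum_k \frac{1}{n_k}\sum_{i \in S_k} a_i^2$ and the off-diagonal contribution $\sum_{k} \sum_{i \in S_k}\sum_{j \notin S_k} t_{ij}^2$, and then bound each in two different ways so as to recover all three claimed inequalities. For the diagonal term, using $a_i^2 \leq a_i$ (since $a_i \leq 1$) gives $\sum_k \frac{1}{n_k}\sum_i a_i^2 \leq \sum_k \delta = \delta m$, while using $\sum_{i \in S_k} a_i^2 \leq (\sum_{i \in S_k} a_i)^2 \leq \delta^2 n_k^2$ gives $\leq \delta^2 n$. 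For the off-diagonal term, using $t_{ij} \leq 1$ gives $\sum_{j \notin S_k} t_{ij}^2 \leq a_i$ and hence a total of $\sum_i a_i \leq \delta n$, while using $\sum_{j \notin S_k} t_{ij}^2 \leq a_i^2$ and the same $a_i^2$ estimate as above gives $\leq \delta^2 n^2$ (this is essentially the off-diagonal bound from Lemma~\ref{lemma:sub-stochastic}). Combining diagonal $\leq \delta m$ with off-diagonal $\leq \delta n$ yields $\|E\|_\F \leq \sqrt{\delta n + \delta m}$; combining diagonal $\leq \delta m$ with off-diagonal $\leq \delta^2 n^2$ yields $\sqrt{\delta^2 n^2 + \delta m}$; and combining diagonal $\leq \delta^2 n$ with off-diagonal $\leq \delta^2 n^2$ gives $\|E\|_\F^2 \leq 2\delta^2 n^2$, hence $\|E\|_\F \leq \sqrt{2}\,\delta n \leq 2\delta n$.

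There is no deep obstacle here; the proof is essentially a direct computation once $B$ is chosen to be the block diagonal of dangling node fixes. The only real care needed is in the bookkeeping, namely recognizing that the two elementary estimates $a_i^2 \leq a_i$ and $\sum_{i \in S_k} a_i^2 \leq (\sum a_i)^2$ must be mixed-and-matched across the diagonal and off-diagonal sums to produce each of the three stated Frobenius inequalities, rather than trying to prove each one from a single uniform estimate.
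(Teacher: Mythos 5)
Your proof is correct and takes essentially the same approach as the paper: the same choice $B_k=\dnf(T_{kk})$, the same row-sum computation for the $\ell_\infty$ bound, and the same diagonal/off-diagonal split of $\|E\|_\F^2$ with the per-block estimate $\frac{1}{n_k}\sum_{i\in S_k}a_i^2\le\delta$. The only (minor) divergence is the $2\delta n$ bound, which the paper gets via the triangle inequality together with the optimality of the dangling node fix (Proposition~\ref{prop:dang}) and Lemma~\ref{lemma:sub-stochastic}, whereas your direct computation yields the slightly sharper $\sqrt{2}\,\delta n$.
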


\begin{proof}
For $k = 1, \ldots, m$, define $B_k$ to be 
$\dnf(T_{kk})$ (see Section~\ref{subsec:dnf}), and let $B=B_1\oplus \ldots \oplus B_m.$  Set $E := T - B$ and partition $E = [E_{kl}]_{k, l = 1}^m$ conformally with $T$.  Hence, we have 
\[E_{kl} = \left\{
\begin{array}{ll}
-(B_k - T_{kk})	& \textrm{if } k = l,	\\
T_{kl}			& \textrm{else.}
\end{array}
\right.\]
Let $S_k \subseteq [n]$ be the set of indices corresponding to $B_k$ and $n_k := |S_k|$.  Define $E_k := [E_{k1} \ldots E_{km}] \in \R^{n_k \times n}$.  
\begin{enumerate}
\item	For the bound on $||E||_\infty$, observe that $E$ is the difference of two stochastic matrices, so $E_k\1 = \0$, $E_{kk} \leq 0$, $E_{kl} \geq 0$ for $l \neq k$, $-E_{kk}\1 \leq \1$, and $\sum_{l \neq k}E_{kl}\1 \leq \1$.  Hence,
\[||E_k||_\infty = 2||E_{kk}||_\infty = 2(1 - r_k),\]
where $r_k := \min_{i \in S_k}\e_i^\top T\1_{S_k}$ is the minimum row sum of $T_{kk}$.  
On the other hand, $||E_{kk}||_\infty$ is at most the sum of the absolute values of all its entries.  As $\omega_{\1}(T; k, k) \geq 1 - \delta$, this is at most $\delta n_k$.  Thus, we have 
\[||E_k||_\infty \leq 2 \cdot \min\{1 - r_k, \delta n_k\}.\]

\item	For the bound on $||E||_\F$, first observe that $||T-B||_\F \le ||T-(T_{11}\oplus \ldots \oplus  T_{mm})||_\F+ ||(T_{11}\oplus \ldots \oplus  T_{mm})-B||_\F.$ From Proposition~\ref{prop:dang} below we find that $||(T_{11}\oplus \ldots \oplus  T_{mm})-B||_\F \le ||T-(T_{11}\oplus \ldots \oplus  T_{mm})||_\F,$ so that $||T-B||_\F \le 2||T-(T_{11}\oplus \ldots \oplus  T_{mm})||_\F.$ Applying 
 Lemma~\ref{lemma:sub-stochastic} now yields  
\begin{equation}\label{eq:dnfub}
||T-B||_\F \le 2   \cdot \min\{\delta n, \sqrt{\delta n}\}.
\end{equation}

  On the other hand, observe that the off--diagonal blocks of $E$ are the same as those constructed in the proof of Lemma~\ref{lemma:sub-stochastic}.  Hence, they contribute at most $\min\{\delta n, \sqrt{\delta n}\}$ to $||E||_\F$.  It remains to estimate the contribution from the diagonal blocks of $E$.  Letting $s_i$ be the sum of the $i$th row of $T_{kk}$, by the definition of the dangling node fix, the contribution to $||E||_\F^2$ from $E_{kk}$ is given by 
\begin{eqnarray*}
\frac1{n_k} \cdot ||(I - T_{kk})\1||_2^2	& =	& \frac1{n_k}\sum_{i \in S_k}(1 - s_i)^2	\\
	& \leq	& \frac1{n_k}\sum_{i \in S_k}(1 - s_i)	\textrm{ (since $0 \leq 1 - s_i \leq 1$ for all $i$)}	\\
	& =		& \frac1{n_k}\left(n_k - \sum_{i \in S_k}s_i\right)	\\
	& \leq	& \frac1{n_k}(n_k - (1 - \delta)n_k)	\\
	& =		& \delta.
\end{eqnarray*}
Hence, the total contribution from the diagonal blocks is at most $\delta m$.  Thus, we get 
\[||E||_\F^2 \leq \min\{\delta^2n^2, \delta n\} + \delta m.\]
Combining this with~\eqref{eq:dnfub} we get the desired bound.\qedhere
\end{enumerate}
\end{proof}

\begin{remark}\label{rmk:1vsLIWV}\rm{
Observe that the results in this section pertain to the coupling matrix with respect to the ones vector, $W_\1$, while Theorem~\ref{thm:small_sing} gives a lower bound on the diagonal entries of $W_{|u|}$, where $u$ is a left singular vector corresponding to the second smallest singular value.  At present, we have not been able to prove that such a lower bound yields a perturbation of a completely decoupled Markov chain (in the sense of Theorem~\ref{thm:directsum_stochastic}).
}

\rmk{
\label{rmk:n^-3/2}
Based on Theorem~\ref{small_sing}, in order to get a meaningful bound on the small singular values of $I - T$ we need $||E||_\infty\sqrt n \ll 1$.  Thus, in order to guarantee this we need $\delta = O(n^{-3/2})$ in Theorem~\ref{thm:directsum_stochastic}.  
This seems like a rather strong requirement to have the average rows sums of the diagonal blocks all be at least $1 - O(n^{-3/2})$.  In particular, if $\delta < 1 / n_k$, then $T_{kk}$ can't have any zero rows (otherwise the average row sum could not be $\geq 1 - \delta$).
}

\end{remark}

\subsection{The dangling node fix}\label{subsec:dnf}


In this subsection, we will show that if we exactly recover a cluster, delete, then apply the dangling node fix (Definition~\ref{defn:dnf}), the error does not blow up too much.  First, we show that for any sub-stochastic matrix, the corresponding dangling node fix is the closest stochastic matrix with respect to several standard norms. 

\begin{proposition}\label{prop:dang} 
	Suppose that $T$ is a sub-stochastic matrix of order $n$.  Then for any $n \times n$ stochastic matrix $S$:
\begin{enumerate}[a)]
	\item	$||T-\dnf(T)||_\infty \le ||T-S||_\infty;$ 
	\item	$||T-\dnf(T)||_2 \le ||T-S||_2;$ 
	\item	$||T-\dnf(T)||_{\F} \le ||T-S||_{\F}.$
\end{enumerate}
\end{proposition}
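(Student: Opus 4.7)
The plan is to rewrite $T - \dnf(T)$ in a transparent rank-one form and then observe that, for any stochastic $S$, the matrix $S - T$ satisfies a row-sum constraint that forces each of the three norms on the right-hand side to be at least as large as the corresponding norm of $T - \dnf(T)$.

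First I would unpack the definition: set $r := \1 - T\1$, so that $r \in [0,1]^n$ is the vector of row deficiencies of $T$. Since $TJ = T\1\,\1^\top = (\1 - r)\1^\top$, Definition~\ref{defn:dnf} gives
\[
\dnf(T) - T \;=\; \frac{1}{n}(J - TJ) \;=\; \frac{1}{n}\,r\,\1^\top.
\]
This identifies the ``error'' matrix as the rank-one matrix $\tfrac{1}{n}r\1^\top$, which makes each of its norms an elementary function of $r$: namely
\[
\|T - \dnf(T)\|_\infty = \max_i r_i, \qquad \|T - \dnf(T)\|_2 = \|T - \dnf(T)\|_\F = \frac{\|r\|_2}{\sqrt n}.
\]
(For the 2-norm and Frobenius norm I use $\|u v^\top\|_2 = \|uv^\top\|_\F = \|u\|_2\|v\|_2$ together with $\|\1\|_2 = \sqrt n$.)

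Next I would use the single observation that drives all three inequalities: for any stochastic $S$, the row sums of $S - T$ are exactly $r$, i.e.\ $(S - T)\1 = \1 - T\1 = r$. From this:
\begin{itemize}
\item For part (a), the triangle inequality gives $\sum_j |(S-T)_{ij}| \geq \bigl|\sum_j (S-T)_{ij}\bigr| = r_i$, so $\|S - T\|_\infty \geq \max_i r_i$.
\item For part (b), submultiplicativity gives $\|r\|_2 = \|(S-T)\1\|_2 \leq \|S-T\|_2\,\|\1\|_2 = \sqrt n\,\|S - T\|_2$, hence $\|S-T\|_2 \geq \|r\|_2/\sqrt n$.
\item For part (c), Cauchy--Schwarz applied row-by-row gives $r_i^2 = \bigl(\sum_j (S-T)_{ij}\bigr)^2 \leq n\sum_j (S-T)_{ij}^2$; summing over $i$ yields $\|r\|_2^2 \leq n\,\|S-T\|_\F^2$.
\end{itemize}
In each case the resulting lower bound on $\|S - T\|$ matches the corresponding norm of $T - \dnf(T)$ computed above, which is exactly what is to be proved.

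I do not anticipate a genuine obstacle here: the proof is essentially a one-line reduction to the rank-one identity for $\dnf(T) - T$ and an application of either the triangle inequality or Cauchy--Schwarz. The only point worth flagging is that one must not assume $S - T \geq 0$ (it generally is not), which is why the triangle inequality in (a) and the squaring step in (c) are necessary rather than direct monotonicity arguments.
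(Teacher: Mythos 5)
Your proof is correct and follows essentially the same route as the paper's: both hinge on the rank-one identity $\dnf(T)-T=\tfrac1n(\1-T\1)\1^\top$, compute the three norms of this matrix explicitly, and lower-bound $\|S-T\|$ using the fact that $(S-T)\1=\1-T\1$. The only cosmetic difference is that the paper obtains part (c) from part (b) via $\|\cdot\|_\F\ge\|\cdot\|_2$, whereas you give a direct row-by-row Cauchy--Schwarz argument; the content is the same.
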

\begin{proof}
Let $\tilde T = \dnf(T)$.
a):	Evidently $T-\tilde{T} = \frac{1}{n}(J-TJ),$ which is nonnegative with row sums $1-\sum_{k=1}^n t_{jk}, j=1, \ldots, n.$ In comparison, we note that for each $j=1, \ldots, n, \sum_{k=1}^n |s_{jk}-t_{jk}| \ge  \sum_{k=1}^n (s_{jk}-t_{jk}) = 1-\sum_{k=1}^n t_{jk}.$ It now follows that $  ||T-S||_\infty \ge ||T-\tilde{T}||_\infty . $\\
b) and c): First note that $(T-\tilde{T})(T-\tilde{T})^\top = \frac{1}{n^2}(J-TJ)(J-TJ)^\top = \frac{1}{n} (\1 - T\1)(\1-T\1)^\top.$ We find readily that $||T-\tilde{T}||_2^2 = ||T-\tilde{T}||_{\F}^2 = n-2\1^\top T \1 + \1^\top T^\top T \1.$ 

Next we note that $(T-S)(T-S)^\top = T^\top T -S^\top T-T^\top S + S^\top S.$ Hence $||T-S||_2^2 \ge \frac{1}{n} \1^\top (T^\top T -S^\top T-T^\top S + S^\top S)\1 = n-2\1^\top T \1 + \1^\top T^\top T \1.$ We now deduce that 
$||T-S||_{\F} \ge ||T-S||_2 \ge ||T-\tilde{T}||_2 = ||T-\tilde{T}||_{\F},$ as desired.  

 \end{proof}

The following lemma will allow us to prove that if we have a Markov chain whose transition matrix is a perturbation of that of a completely decoupled Markov chain, then if we partition the perturbed chain conformally and perform the dangling node fix on each diagonal block, then we do not increase the difference in several norms between the perturbed matrix and the decoupled matrix.

\begin{lemma}\label{thm:noblowup}
Let $T_0$ and $T$ be $n \times n$ stochastic matrices.  Suppose that $T_0[S]$ is also stochastic, and let $\tilde T = \dnf(T[S])$.  Then $||\tilde T - T_0[S]||_2 \leq ||T - T_0||_2,$  $||\tilde T - T_0[S]||_\F \leq ||T - T_0||_\F,$ and $||\tilde T - T_0[S]||_\infty \leq ||T - T_0||_\infty.$ 
\end{lemma}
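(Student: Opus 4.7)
The plan is to reduce the lemma to a clean factorization of $\tilde T - T_0[S]$ as $E[S]\cdot P$, where $E := T - T_0$ and $P$ is the orthogonal projection onto $\1^\perp$ in $\R^{|S|}$; once this is in hand, all three norm inequalities will drop out. The starting observation is that since $T_0$ and $T_0[S]$ are both stochastic, every row of $T_0$ indexed by $S$ already has mass $1$ inside $S$, so the off--block $T_0[S,\bar S]$ is zero (here $\bar S := \{1,\dots,n\}\setminus S$). Consequently $E$ has zero row sums, and $E[S,\bar S] = T[S,\bar S] \ge 0$.

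Unpacking the definition of the dangling node fix, for $i,j \in S$ we have
\[ \tilde T_{ij} - (T_0[S])_{ij} \;=\; E_{ij} + \frac{1 - \sum_{k \in S} T_{ik}}{|S|}. \]
Stochasticity of $T$ together with $E[S,\bar S] \ge 0$ gives $1 - \sum_{k \in S} T_{ik} = \sum_{k \notin S} E_{ik} = -\sum_{k \in S} E_{ik}$, so the correction term equals $-\frac{1}{|S|}\sum_{k \in S} E_{ik}$. Substituting yields the key identity
\[ \tilde T - T_0[S] \;=\; E[S]\,P, \qquad P := I_{|S|} - \frac{1}{|S|}\1\1^\top, \]
and $P$, being an orthogonal projection, satisfies $\|P\|_2 = 1$.

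From here the $2$--norm and Frobenius bounds are immediate: $\|\tilde T - T_0[S]\|_2 \le \|E[S]\|_2\|P\|_2 \le \|E\|_2$ and $\|\tilde T - T_0[S]\|_\F \le \|E[S]\|_\F\|P\|_2 \le \|E\|_\F$, using that a principal submatrix has no larger $2$-- or Frobenius norm than the ambient matrix. For the $\infty$--norm I would argue row by row: the identity says that row $i$ of $\tilde T - T_0[S]$ is row $i$ of $E[S]$ centered about its mean, so its $\ell_1$--norm is at most $\sum_{j \in S}|E_{ij}| + \bigl|\sum_{j \in S} E_{ij}\bigr|$. By $E\1 = 0$ and $E[S,\bar S] \ge 0$, the second term equals $\sum_{j \notin S} E_{ij} = \sum_{j \notin S}|E_{ij}|$, and adding gives $\sum_j |E_{ij}|$. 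Maximizing over $i \in S$ then yields $\|\tilde T - T_0[S]\|_\infty \le \|E\|_\infty$.

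I expect the main obstacle to be spotting the factorization $\tilde T - T_0[S] = E[S]\,P$ itself. The more naive triangle-inequality route through Proposition~\ref{prop:dang} only yields the three inequalities with an unwanted factor of $2$, so the sharpness here genuinely depends on observing that the correction introduced by the dangling node fix lines up exactly with the centering operator $P$; this alignment is what the stochasticity assumption on $T_0[S]$ is buying us.
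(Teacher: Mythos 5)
Your proof is correct and follows essentially the same route as the paper: the paper's proof rests on exactly the factorization $\tilde T - T_0[S] = E[S]\bigl(I - \tfrac{1}{|S|}J\bigr)$ that you identify, and its $\infty$-norm argument is the same row-by-row computation. The only cosmetic difference is that for the $2$- and Frobenius norms the paper dominates all singular values of $E[S]\bigl(I-\tfrac{1}{|S|}J\bigr)$ via Weyl's inequality, whereas you invoke $\|AP\|\le\|A\|\,\|P\|_2$ with $\|P\|_2=1$; both are valid.
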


\begin{proof}
Let $E = T - T_0$.  By Definition~\ref{defn:dnf} we get $\tilde T = T_0 + E[S]\left(I - \frac1{|S|}J\right)$.  Thus, let us define
\begin{equation*}
\tilde E := E[S]\left(I - \frac1{|S|}J\right).
\end{equation*}
Then we have
\[\tilde E\tilde E^\top = E[S]\left(I - \frac1{|S|}J\right)E[S]^\top = E[S]E[S]^\top - \frac1{|S|}E[S]JE[S]^\top.\]
As $-\frac1{|S|}E[S]JE[S]^\top \preceq 0$, Weyl's inequalities give
\[\lambda_i(\tilde E\tilde E^\top) = \sigma_i(\tilde E)^2 \leq \lambda_i(E[S]E[S]^\top) = \sigma_i(E[S])^2\]
for $i = 1, \ldots, n$.  Hence, by interlacing we have 
\[||\tilde E||_2 = \sigma_1(\tilde E) \leq \sigma_1(E[S]) \leq \sigma_1(E) = ||E||_2,\]
and moreover
\[||\tilde E||_\F^2 = \sum_{i = 1}^n\sigma_i(\tilde E)^2 \leq \sum_{i = 1}^n\sigma_i(E[S])^2 = ||E[S]||_\F^2 \leq ||E||_\F^2.\]
The last inequality follows from the fact that the Frobenius norm of a matrix is at least the Frobenius norm of any submatrix. 

Finally, we note that $\tilde{E}=E[S] - E[S]\frac1{|S|}J = E[S] + \frac1{|S|}E[S,S^c]\1 \1^{\top}.$ Hence, for any index $k$ with $1 \le k \le |S|,$ we have 
$||e_k^{\top} \tilde{E}||_1 \le ||e_k^{\top} E[S]||_1 + \frac1{|S|}||e_k^{\top}E[S,S^c]\1 \1^{\top}||_1 = ||e_k^{\top} E[S]||_1 + |e_k^{\top}E[S,S^c]\1| \le ||e_k^{\top} E[S]||_1 + ||E[S,S^c]||_1 = ||e_k^{\top}E||_1.$ It now follows readily that $ ||\tilde E||_\infty \le ||E||_\infty.$  
\end{proof}

Lemma~\ref{thm:noblowup} then yields the following as a direct corollary:

\begin{theorem}\label{cor:noblowup}
Let $T_i$ be a stochastic matrix of order $n_i$ with index set $S_i$ for $i = 1, \ldots, k$, where $n_1 + \ldots + n_k = n$, and let $T = \bigoplus_{i = 1}^kT_i + xE$.  Assume that $T$ is stochastic.  Let $\emptyset \subset \mathcal I \subset [k]$ and $S := \bigcup_{i \in \mathcal I}S_i$.  Then $\dnf(T[S]) = \bigoplus_{i \in \mathcal I}T_i + x\tilde E$, where $||\tilde E||_2 \leq ||E||_2,$ $||\tilde E||_{\F} \leq ||E||_{\F},$ and $||\tilde E||_\infty \leq ||E||_\infty$.
\end{theorem}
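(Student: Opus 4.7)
The plan is a direct reduction to Lemma \ref{thm:noblowup}. I would set $T_0 := \bigoplus_{i=1}^k T_i$, which is stochastic as a direct sum of stochastic matrices, and observe that the hypothesis gives $T = T_0 + xE$. Because $S = \bigcup_{i \in \mathcal I} S_i$ is a union of complete block index sets of $T_0$, the principal submatrix $T_0[S]$ coincides with $\bigoplus_{i \in \mathcal I} T_i$, and is therefore itself stochastic. This verifies the hypotheses of Lemma \ref{thm:noblowup} for the pair $(T_0, T)$ and the subset $S$.

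Applying that lemma yields, for $\bullet \in \{2, \F, \infty\}$,
\[
\|\dnf(T[S]) - T_0[S]\|_\bullet \;\le\; \|T - T_0\|_\bullet \;=\; |x| \cdot \|E\|_\bullet.
\]
The identity $\dnf(T[S]) = \bigoplus_{i \in \mathcal I} T_i + x \tilde E = T_0[S] + x \tilde E$ in the statement implicitly defines $\tilde E$: when $x \ne 0$, set $\tilde E := \tfrac{1}{x}(\dnf(T[S]) - T_0[S])$; when $x = 0$, the lemma forces $\dnf(T[S]) = T_0[S]$ so any choice (say $\tilde E := 0$) works. Dividing the displayed bound by $|x|$ in the $x \ne 0$ case then gives $\|\tilde E\|_\bullet \le \|E\|_\bullet$ in each of the three norms, as required. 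There is essentially no obstacle; the only point to check carefully is that $S$ consists of full blocks, so that $T_0[S]$ is stochastic and the lemma applies — and this is immediate from the definition of $S$ as a union of the $S_i$.
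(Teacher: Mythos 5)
Your proposal is correct and is essentially the paper's own proof, which simply applies Lemma~\ref{thm:noblowup}; your choice $T_0 = \bigoplus_{i=1}^k T_i$ (so that $T_0[S] = \bigoplus_{i \in \mathcal I} T_i$ is stochastic) is the right reading of the hypotheses, and your handling of the scalar $x$ and the degenerate case $x=0$ just makes explicit what the paper leaves implicit.
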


\begin{proof}
Apply Lemma~\ref{thm:noblowup} with $T_0 = \bigoplus_{i \in \mathcal I}T_i$.
\end{proof}

\begin{remark}\rm{
	Here we consider the behaviour of the dangling node fix when it is iterated on principal submatrices of a stochastic matrix. To be concrete, suppose that we have a stochastic matrix $T$ partitioned as $$T=\left[\begin{array}{c|c|c} T_{11} &T_{12}&T_{13}\\ \hline T_{21} &T_{22}&T_{23}\\ \hline T_{31} & T_{32}&T_{33}\end{array} 	
	\right],$$ 
	where the diagonal blocks are of sizes $k, \ell,$ and $m,$ respectively. Applying the dangling node fix to $T_{11}$ is easily seen to yield the $k \times k$ stochastic matrix $T_{11}+\frac{1}{k}(T_{12}\1+T_{13}\1)\1^{\top}.$ 
	
	Next we consider what happens when we i) apply the dangling node fix to the leading $(k+\ell)\times(k+\ell)$ submatrix of $T$ to generate a stochastic matrix $S$, then ii) apply the dangling node fix to the leading $k \times k$ submatrix of $S$. For i), we find that 
	\begin{eqnarray*}
	S &=& \left[\begin{array}{c|c} T_{11} &T_{12}\\ \hline T_{21} &T_{22}\end{array} 	
	\right] + \frac{1}{k+\ell} \left[\begin{array}{c}T_{13}\1 \\ \hline T_{23}\1 \end{array}\right]\1^{\top} \\
	&=& \left[\begin{array}{c|c} T_{11}+ \frac{1}{k+\ell}T_{13}\1 \1^{\top}  &T_{12}+ \frac{1}{k+\ell}T_{13}\1 \1^{\top}\\ \hline T_{21}+ \frac{1}{k+\ell}T_{23}\1 \1^{\top} &T_{22}+ \frac{1}{k+\ell}T_{23}\1 \1^{\top}\end{array} 	
	\right].
\end{eqnarray*}
	
	Since the leading $k \times k$ principal submatrix of $S$ is 
	$T_{11}+ \frac{1}{k+\ell}T_{13}\1 \1^{\top},$ applying the corresponding dangling node fix in ii) yields 
	$$T_{11}+ \frac{1}{k+\ell}T_{13}\1 \1^{\top} + \frac{1}{k}\left(T_{12}\1 + \frac{\ell}{k+\ell}T_{13}\1\right)\1^{\top} = T_{11}+\frac{1}{k}(T_{12}\1+T_{13}\1)\1^{\top}.$$  
	  
	 Thus we find that the dangling node fix has the appealing property that when it is  iteratively applied to nested principal submatrices,  say on index sets $S_0$ and $S_1$ with $S_1 \subset S_0,$ the effect is the same as having applied the dangling node fix to the original submatrix corresponding to the index set $S_1$. 
	  }
\end{remark}

\section{Numerical results}\label{sec:numerical}

In this section we observe our algorithm's performance on various datasets, both real and simulated. All of our tests were run in MATLAB\textsuperscript{\textregistered} Version~9.8.0.1323502~(R2020a) on a Mac~Pro\textsuperscript{\textregistered} 5,1 with 24 6-core Intel\textsuperscript{\textregistered} Xeon\textsuperscript{\textregistered} CPU E5645 2.40GHz processors, running Ubuntu\textsuperscript{\textregistered}~20.04.1.  The relative machine precision was $\texttt{eps} = 2.2204 \times 10^{-16}$.  All of our code is available to download at \url{https://github.com/smpcole/clustered-markov}.

Images were produced in Python\textsuperscript{\textregistered} Version~3.8.10 using Matplotlib Version~3.1.2. All images are vector (rather than raster) images; thus, we invite the reader to zoom in without encountering pixelated images.
\subsection{Airport network} \label{air}

Here we report the results of running Algorithm~\ref{alg} on a data set arising from a network of airports. 
The data set was downloaded from \url{https://toreopsahl.com/datasets/}, and is based on  data available  from the Complex Networks Collaboratory  
\url{https://sites.google.com/site/cxnets/usairtransportationnetwork}. The 
 network represents traffic between $500$ commercial airports in the United States; vertices correspond to the individual airports, and the weight of an edge between two vertices is the number of seats available on flights between the corresponding airports. This results in a weighted undirected graph. From the weighted adjacency matrix of that graph, we normalised each row by dividing by its sum, thus creating a stochastic matrix $T$. Note that $T$ is the transition matrix of the Markov chain arising from a \emph{random walk} on the graph representing the network.

Using the tolerance $\tau =0.15$, the algorithm yields the following results. There are seven clusters, of sizes  $37, 5, 27, 24, 17, 54$ and  $336,$  and the associated coupling matrix is 
$$W_u(T) = 
\left[\begin{array}{ccccccc} 
    0.6857 &   0.0000 &        0 &   0.0217&         0&    0.0245 &   0.2681\\
0.0019    &0.9981      &   0       &  0        & 0        & 0       &  0\\
0         &0 &   0.8832  &  0.1153 &   0.0015    &     0&         0\\
0.0070     &    0   & 0.0014  &  0.9881   & 0.0006   & 0.0006 &   0.0023\\
0         &0 &   0.0020 &   0.1642 &   0.8338&         0       &  0\\
0.1814     &    0       &  0 &   0.0517      &   0    &0.7645   & 0.0024\\
0.0860      &   0        & 0&    0.0075       &  0   & 0.0002    &0.9064
\end{array}\right]
$$
(here $u$ is the left-iterative weight vector computed by Algorithm \ref{alg}). 

We also computed the spectral radii of the principal submatrices of $T$ corresponding to the clusters, and these are  $0.6647, 0.9676, 0.8725, 0.8419, 0.8200, 0.7829$ and $ 0.8874,$ respectively. Observe that these spectral radii are well--correlated with the diagonal entries of $W_u(T)$. 

For comparison purposes, we computed the coupling matrix that arises when the weight vector for each cluster is an all--ones vector, with the following result: 
$$W_{\1}(T)= \left[\begin{array}{ccccccc} 
   0.7826&    0.0000     &    0 &   0.0088     &    0&    0.0191&    0.1894\\
0.0044    &0.9956       &  0     &    0       &  0       &  0     &    0\\
0         &0 &   0.9501 &   0.0485 &   0.0014 &        0   &      0\\
0.0381     &    0&    0.0137   & 0.9139 &   0.0022  &  0.0023&    0.0298\\
0        & 0   & 0.0044    &0.1039   & 0.8917     &    0       &  0\\
0.6353    &     0   &      0&    0.0146 &        0 &   0.3206  &  0.0295\\ 
0.0353     &    0  &       0 &   0.0036&         0  &  0.0016 &   0.9595
\end{array}\right]. 
$$
The most striking difference between $W_u(T)$ and  $W_{\1}(T)$ is in the sixth rows. This may be explained by the fact that the subvector of $u$ corresponding to  the sixth cluster is not close being a scalar multiple of an all--ones vector. In particular, that subvector, normalised so that its entries sum to $1$, has two large entries ($0.2079 $ and $0.1671$) 
 $15$ entries with values ranging between $
 0.0147$ and $0.0724$, and $37$ entries with values ranging between $8790.0015$ and $0.0032$.   We also observe that the diagonal entries of $W_{\1}(T)$ are not as well--correlated with the spectral radii of the corresponding principal submatrices of $T$ as the diagonal entries of $W_u(T)$ are. 
 
 Figure \ref{fig:airport} illustrates the sequence of iterates that is produced by Algorithm \ref{alg}.

 \begin{figure}
 \centering
 	\includegraphics[scale=.33]{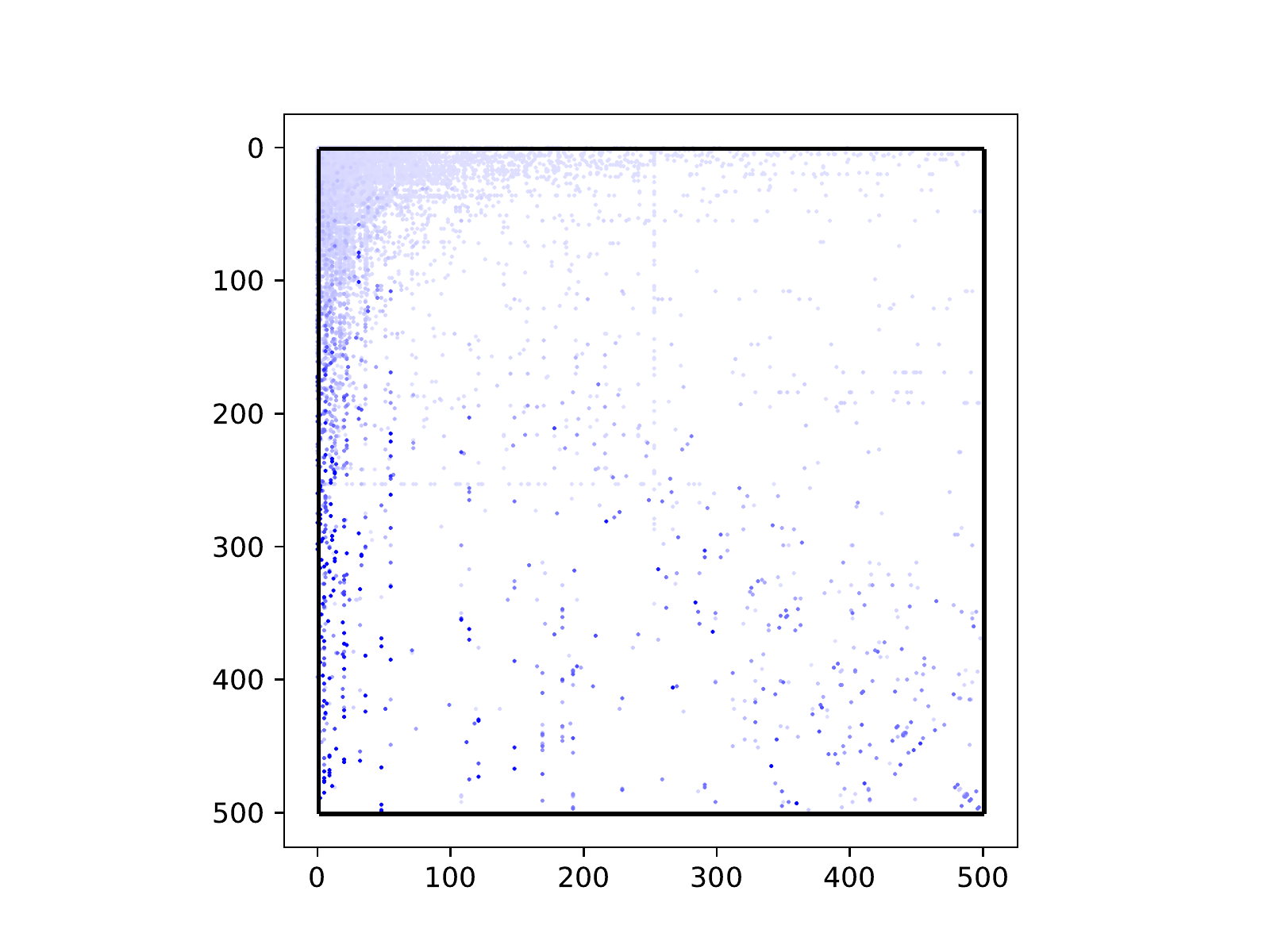}
 	\includegraphics[scale=.33]{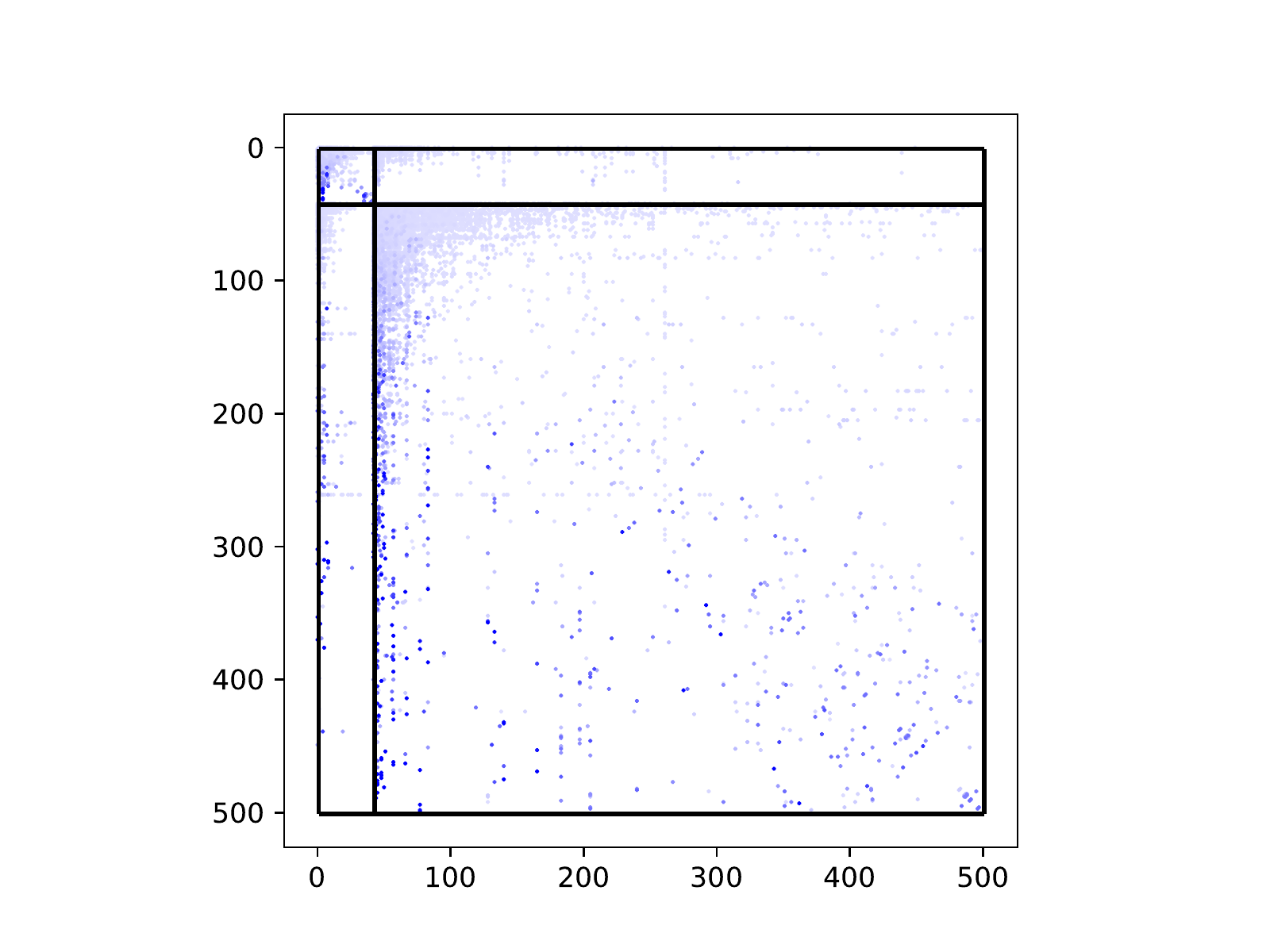}
 	\includegraphics[scale=.33]{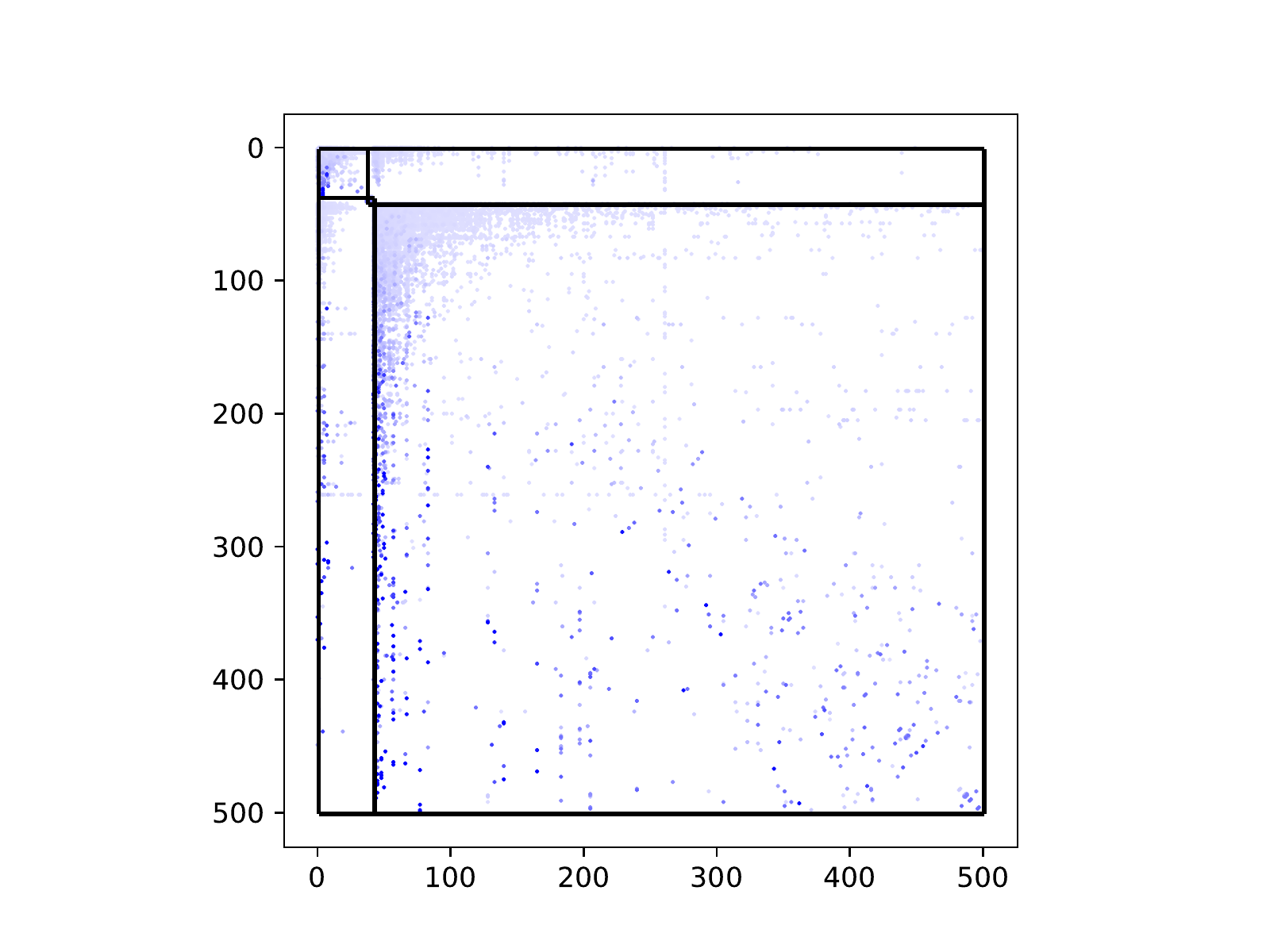}
 		\includegraphics[scale=.33]{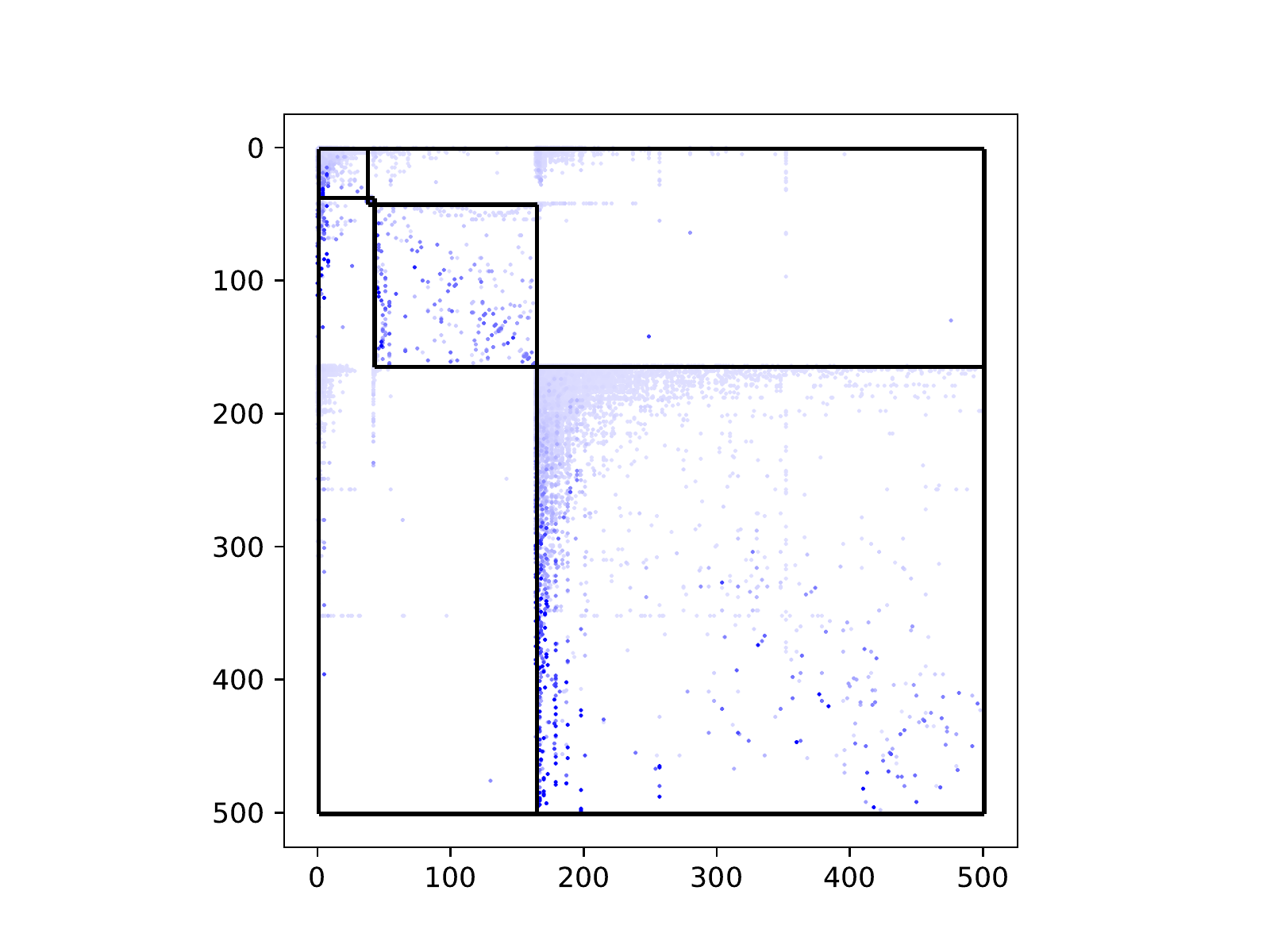}
 			\includegraphics[scale=.33]{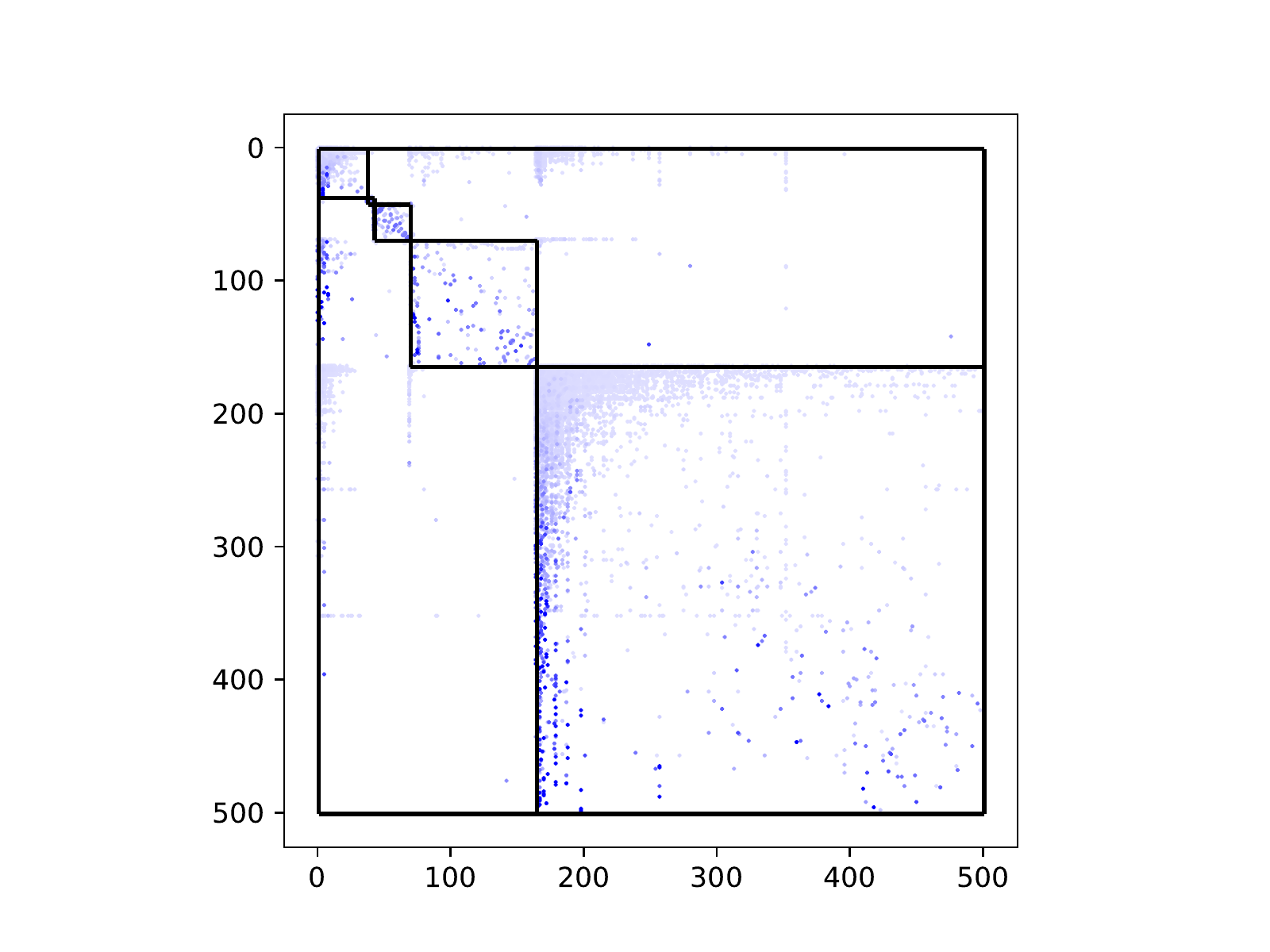}
 				\includegraphics[scale=.33]{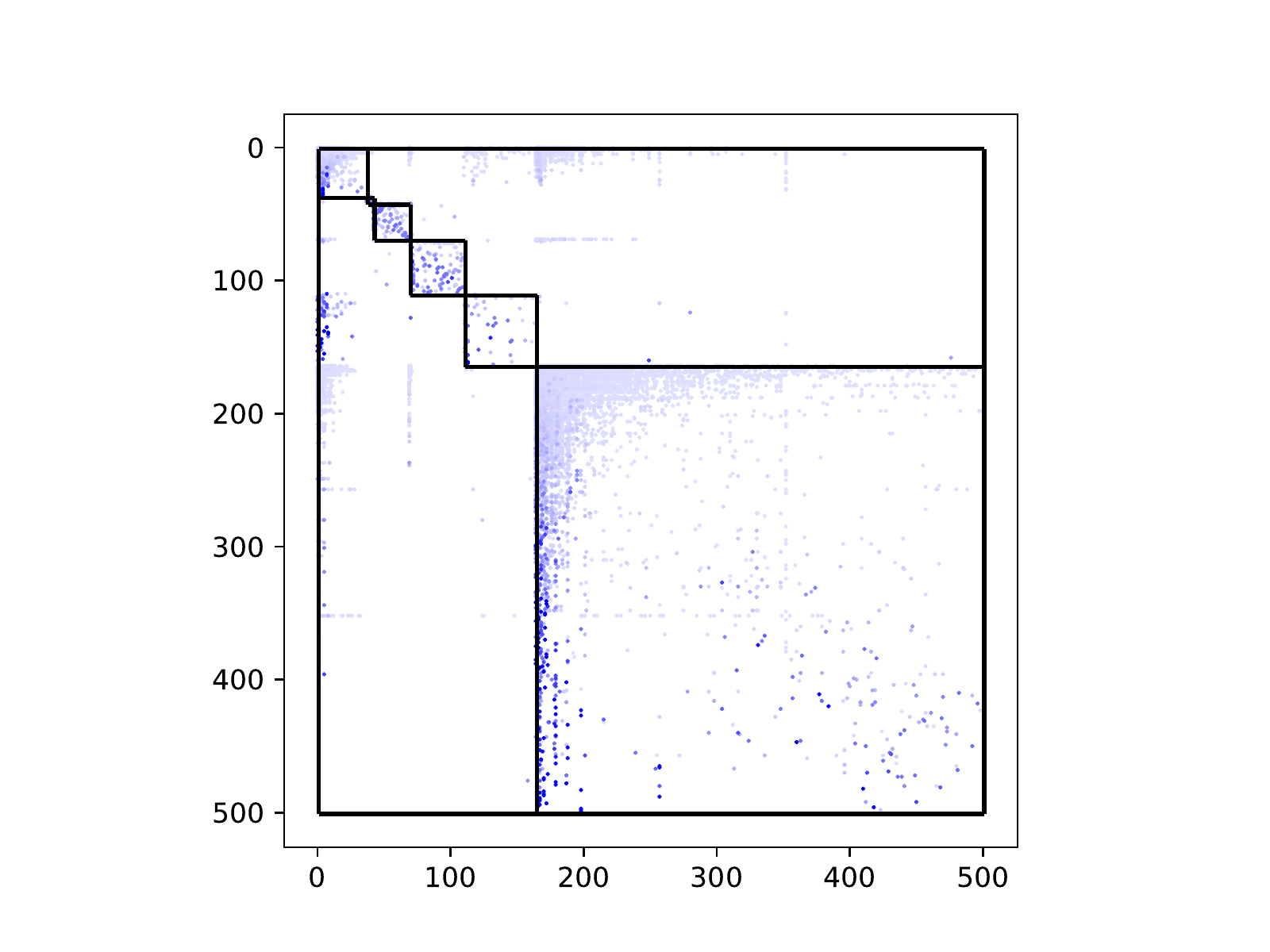}
 					\includegraphics[scale=.33]{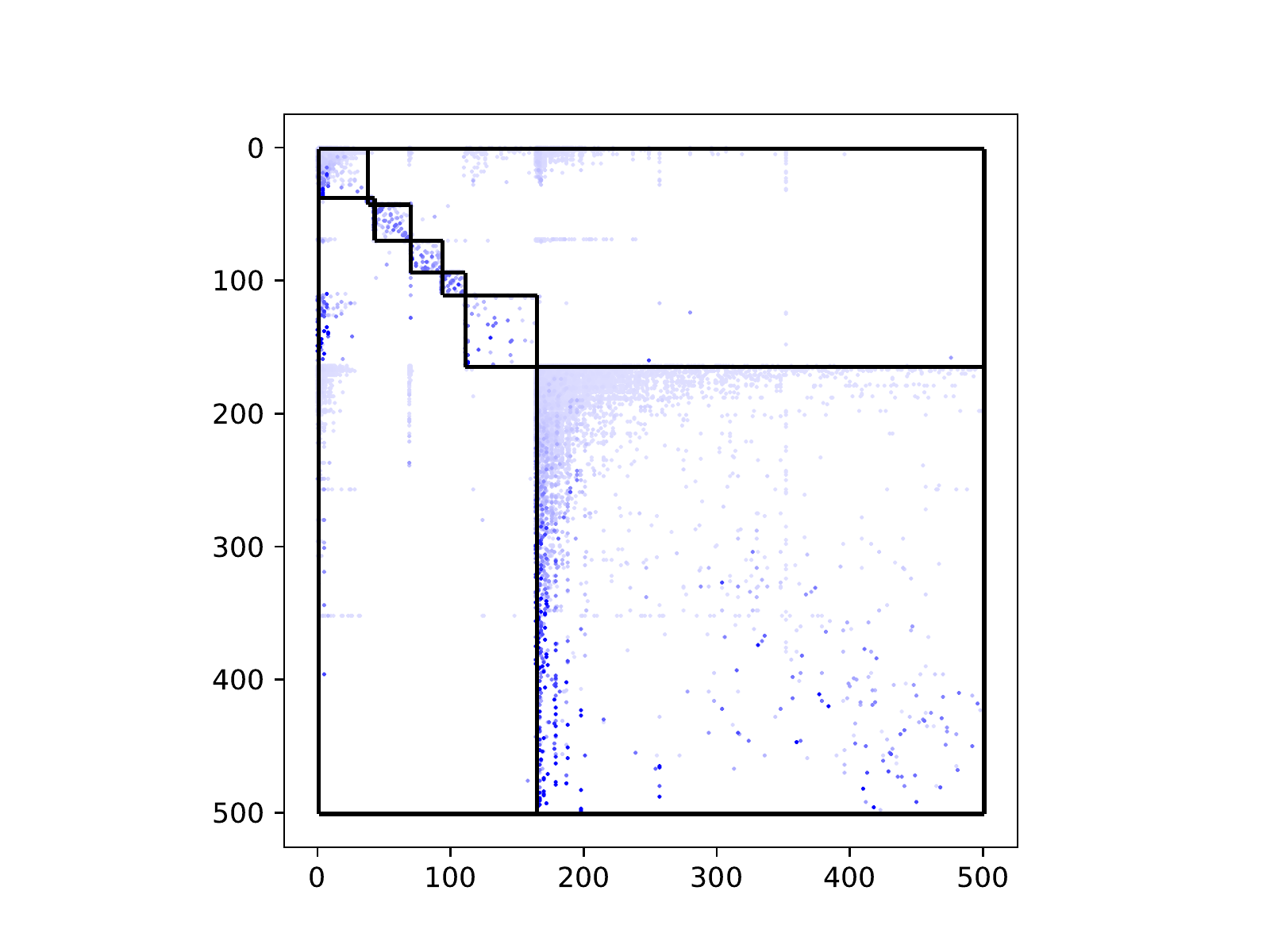}
 	\caption{The results of Algorithm~\ref{alg} applied to the stochastic matrix arising from the network of airports in Section \ref{air}. Darker points correspond   to higher transition probabilities.}\label{fig:airport}. 
 \end{figure}

\subsection{Two--mode networks}\label{sec:twomode}

In this section we observe our algorithm's performance on data sets arising from \emph{two-mode} networks.  A two-mode network can be modeled by a \emph{bipartite graph} $G$---that is, the nodes (vertices) come from two disjoint partite sets (perhaps representing two distinct types of objects, e.g.\ authors and publications), and only pairs of nodes from opposite sets are adjacent.  A $m \times n$ $(0,1)$ matrix $D$ can be used to represent such a network as follows.  Rows represent vertices of the first type (e.g.\ authors), and columns represent vertices of the second type (e.g.\ publications); there is a $1$ in the $(i, j)$ position if the two vertices are adjacent (e.g.\ if author $i$ was an author of publication $j$).  We can then construct a symmetric $(m + n) \times (m + n)$ matrix $A = \left[\begin{array}{c|c} 0&D \\ \hline D^{\top} &0 \end{array}\right],$ then normalise $A$ by dividing each row by its corresponding row sum, to generate a stochastic matrix $T$ which can be used as input to Algorithm~\ref{alg}.  Note that $T$ is the transition matrix of the Markov chain corresponding to a \emph{random walk} on $G$.

Our algorithm appears to perform quite well on Markov chains arising from two-mode networks.  Moreover, it has the potential to be a powerful tool in such applications, as it simultaneously identifies clusters in both partite sets based solely on the connections between them.  One potential real-world application is the problem of \emph{market segmentation} (see, e.g.\ \cite{wedel2012market}), in which one aims to find clusters in a two-mode network of customers and products they purchased.  Once clusters have been identified, an advertiser can suggest products to a customer based on the purchases of other customers in the same cluster.

\ex{\label{ex:deepsouth}

As a warmup, consider a two-mode social network consisting of individuals and activities.  We can represent it with a $(0, 1)$ matrix in which rows represent individuals and columns represent the activities in which they are involved (e.g.\ the events that they attend or the organisations to which they belong), with a $1$ in the $(i,j)$ position if individual $i$ is involved with activity $j$, and a $0$ there otherwise. A classic example with $18$ individuals and $14$ events \cite{davis} yields the following $(0,1)$ matrix: 
$$D=\left[\begin{array}{cccccccccccccc}
     1 &    1  &    1&      1 &     1 &     1   &   0 &     1  &    1 &    0&    0&     0 &   0 &   0 \\
1  &    1&      1     & 0     & 1     & 1     & 1     & 1     & 0     & 0     & 0     & 0   & 0   & 0\\
0&      1     & 1     & 1     & 1     & 1     & 1     & 1     & 1     & 0     & 0     & 0   & 0   & 0\\
1     & 0     & 1     & 1     & 1     & 1     & 1     & 1     & 0     & 0     & 0     & 0   & 0   & 0\\
0     & 0     & 1     & 1     & 1     & 0     & 1     & 0     & 0     & 0     & 0     & 0   & 0   & 0\\
0     & 0     & 1     & 0     & 1     & 1     & 0     & 1     & 0     & 0     & 0     & 0   & 0   & 0\\
0     & 0     & 0     & 0     & 1     & 1     & 1     & 1     & 0     & 0     & 0     & 0   & 0   & 0\\
0     & 0     & 0     & 0     & 0     & 1     & 0     & 1     & 1     & 0     & 0     & 0   & 0   & 0\\
0     & 0     & 0     & 0     & 1     & 0     & 1     & 1     & 1     & 0     & 0     & 0   & 0   & 0\\
0     & 0     & 0     & 0     & 0     & 0     & 1     & 1     & 1     & 0     & 0     & 1   & 0   & 0\\
0     & 0     & 0     & 0     & 0     & 0     & 0     & 1     & 1     & 1     & 0     & 1   & 0   & 0\\
0     & 0     & 0     & 0     & 0     & 0     & 0     & 1     & 1     & 1     & 0  &    1   & 1   & 1\\
0  &    0     & 0     & 0     & 0     & 0     & 1     & 1     & 1     & 1     & 0     & 1   & 1   & 1\\
0     & 0     & 0     & 0     & 0     & 1     & 1     & 0     & 1     & 1     & 1     & 1   & 1   & 1\\
0     & 0     & 0     & 0     & 0     & 0     & 1     & 1     & 0     & 1     & 1     & 1   & 0   & 0\\
0     & 0     & 0     & 0     & 0     & 0     & 0     & 1     & 1     & 0     & 0     & 0   & 0   & 0\\
0     & 0     & 0     & 0     & 0     & 0     & 0     & 0     & 1     & 0     & 1     & 0   & 0   & 0\\
0     & 0     & 0     & 0     & 0     & 0     & 0     & 0     & 1     & 0     & 1     & 0   & 0   & 0 
\end{array}\right].$$ 
In order to identify some clusters within this two--mode network, we  generate a $32\times 32$ stochastic matrix $T$, which is the transition matrix of the random walk on the bipartite graph associated with $D$.  


Next we run Algorithm \ref{alg} on $T$ with tolerance $\tau =0.2;$   
The smallest positive singular value of $I-T$ is approximately $0.1965,$  with the following left singular vector: 
$$x=\left[\begin{array}{c}   -0.2392\\
-0.2245\\
-0.1912\\
-0.2252\\
-0.1599\\
-0.1276\\
-0.0814\\
-0.0022\\
-0.0240\\
0.0716\\
0.1351\\
0.2469\\
0.2363\\
0.2744\\
0.1651\\
0.0324\\
0.1179\\
0.1179\\
-0.1382\\
-0.1266\\
-0.2363\\
-0.1698\\
-0.2582\\
-0.1647\\
-0.0439\\
-0.0052\\
0.2584\\
0.2190\\
0.2299\\
0.2344\\
0.1525\\
0.1525\\
\end{array}\right]. 
$$
This generates the following index sets: $S_1=\{1, \ldots, 9, 19, \ldots, 26\},$  $S_2 = \{10, \ldots, 18, 27, \ldots, 32\}.$ Since the smallest positive singular values of $I-\dnf(T[S_1])$ and $I-\dnf(T[S_2])$ are $0.6025$ and  $0.4187$ respectively, the algorithm terminates, and the left-iterative weight vector is $u=|x|.$ 
The corresponding coupling matrix is $W_{u}(T)=  
\begin{bmatrix}    0.9583&    0.0417\\
0.1570   & 0.8430
\end{bmatrix}.$ 

Referring back to the original matrix $D,$ the indices in $S_1$ and $S_2$ suggest that we should partition the rows of $D$ as $\{1, \ldots, 9,\} \cup \{10, \ldots, 18\},$ and the columns as $\{1, \ldots, 8\} \cup \{9, \ldots, 14\}.$ 
This yields the following partitioned matrix:
$$\left[\begin{array}{cccccccc|cccccc}
1 &    1  &    1&      1 &     1 &     1   &   0 &     1  &    1 &    0&    0&     0 &   0 &   0 \\
1  &    1&      1     & 0     & 1     & 1     & 1     & 1     & 0     & 0     & 0     & 0   & 0   & 0\\
0&      1     & 1     & 1     & 1     & 1     & 1     & 1     & 1     & 0     & 0     & 0   & 0   & 0\\
1     & 0     & 1     & 1     & 1     & 1     & 1     & 1     & 0     & 0     & 0     & 0   & 0   & 0\\
0     & 0     & 1     & 1     & 1     & 0     & 1     & 0     & 0     & 0     & 0     & 0   & 0   & 0\\
0     & 0     & 1     & 0     & 1     & 1     & 0     & 1     & 0     & 0     & 0     & 0   & 0   & 0\\
0     & 0     & 0     & 0     & 1     & 1     & 1     & 1     & 0     & 0     & 0     & 0   & 0   & 0\\
0     & 0     & 0     & 0     & 0     & 1     & 0     & 1     & 1     & 0     & 0     & 0   & 0   & 0\\
0     & 0     & 0     & 0     & 1     & 0     & 1     & 1     & 1     & 0     & 0     & 0   & 0   & 0\\ \hline
0     & 0     & 0     & 0     & 0     & 0     & 1     & 1     & 1     & 0     & 0     & 1   & 0   & 0\\
0     & 0     & 0     & 0     & 0     & 0     & 0     & 1     & 1     & 1     & 0     & 1   & 0   & 0\\
0     & 0     & 0     & 0     & 0     & 0     & 0     & 1     & 1     & 1     & 0  &    1   & 1   & 1\\
0  &    0     & 0     & 0     & 0     & 0     & 1     & 1     & 1     & 1     & 0     & 1   & 1   & 1\\
0     & 0     & 0     & 0     & 0     & 1     & 1     & 0     & 1     & 1     & 1     & 1   & 1   & 1\\
0     & 0     & 0     & 0     & 0     & 0     & 1     & 1     & 0     & 1     & 1     & 1   & 0   & 0\\
0     & 0     & 0     & 0     & 0     & 0     & 0     & 1     & 1     & 0     & 0     & 0   & 0   & 0\\
0     & 0     & 0     & 0     & 0     & 0     & 0     & 0     & 1     & 0     & 1     & 0   & 0   & 0\\
0     & 0     & 0     & 0     & 0     & 0     & 0     & 0     & 1     & 0     & 1     & 0   & 0   & 0 
\end{array}\right].$$ 


}

\ex{

In this next example, we analyze a two-mode network of authors and publications based on the bibliography of~\cite{imrich2000product}.  The dataset can be downloaded here: \url{http://vlado.fmf.uni-lj.si/pub/networks/data/2mode/Sandi/Sandi.htm}.  This network consists of 674 nodes---314 authors and 360 publications---with author $i$ adjacent to publication $j$ if and only if \ $i$ is an author of $j$.  The network contains one large connected component of size 253, and 128 connected components of size at most 14; thus, we henceforth restrict our attention to the single large connected component.

As described at the beginning of Section~\ref{sec:twomode}, we construct an $86 \times 167$ matrix $D$ whose rows correspond to the authors and columns to the publications in the restricted network and whose $(i, j)$ entry is 1 if author $i$ is an author of publication $j$, 0 otherwise.  We then construct the $253 \times 253$ symmetric matrix $A = \left[\begin{array}{c|c} 0&D \\ \hline D^{\top} &0 \end{array}\right]$ and normalise it to get a $253 \times 253$ stochastic matrix $T$ accordingly.  

Running Algorithm~\ref{alg} on $T$ with a tolerance of $\tau = .05$ yields 14 clusters, of sizes 8, 17, 6, 16, 15, 7, 12, 33, 13, 28, 7, 52, 27, and 12. The coupling matrices with respect to the left-iterative weight vector and the ones vector are, respectively,
\begin{center}
\scalebox{.625}{
$W_v = \left[
\begin{array}{cccccccccccccc}
0.9777 & 0.02232 & 0 & 0 & 0 & 0 & 0 & 0 & 0 & 0 & 0 & 0 & 0 & 0 \\
0.000897 & 0.955 & 0 & 0.02154 & 0.01077 & 0 & 0 & 0.00322 & 0 & 0 & 0 & 0.008609 & 0 & 0 \\
0 & 0 & 0.9839 & 0.01607 & 0 & 0 & 0 & 0 & 0 & 0 & 0 & 0 & 0 & 0 \\
0 & 0.03701 & 0.02774 & 0.9353 & 0 & 0 & 0 & 0 & 0 & 0 & 0 & 0 & 0 & 0 \\
0 & 0.006324 & 0 & 0 & 0.9937 & 0 & 0 & 0 & 0 & 0 & 0 & 0 & 0 & 0 \\
0 & 0 & 0 & 0 & 0 & 0.9731 & 0.02688 & 0 & 0 & 0 & 0 & 0 & 0 & 0 \\
0 & 0 & 0 & 0 & 0 & 0.01941 & 0.9612 & 0.01941 & 0 & 0 & 0 & 0 & 0 & 0 \\
0 & 0.0164 & 0 & 0 & 0 & 0 & 0.006544 & 0.9665 & 0 & 0.01052 & 0 & 0 & 0 & 0 \\
0 & 0 & 0 & 0 & 0 & 0 & 0 & 0 & 0.9929 & 0.007096 & 0 & 0 & 0 & 0 \\
0 & 0 & 0 & 0 & 0 & 0 & 0 & 0.01491 & 0.01587 & 0.8994 & 0.008809 & 0.06103 & 0 & 0 \\
0 & 0 & 0 & 0 & 0 & 0 & 0 & 0 & 0 & 0.02626 & 0.9737 & 0 & 0 & 0 \\
0 & 0.01182 & 0 & 0 & 0 & 0 & 0 & 0 & 0 & 0.006924 & 0 & 0.9763 & 0.004121 & 0.0008241 \\
0 & 0 & 0 & 0 & 0 & 0 & 0 & 0 & 0 & 0 & 0 & 0.01815 & 0.9819 & 0 \\
0 & 0 & 0 & 0 & 0 & 0 & 0 & 0 & 0 & 0 & 0 & 0.01423 & 0 & 0.9858
\end{array}
\right],$
}
\end{center}
\begin{center}
\scalebox{.625}{
$W_\1 = 
\left[
\begin{array}{cccccccccccccc}
0.9375 & 0.0625 & 0 & 0 & 0 & 0 & 0 & 0 & 0 & 0 & 0 & 0 & 0 & 0 \\
0.01961 & 0.9034 & 0 & 0.01471 & 0.007353 & 0 & 0 & 0.03529 & 0 & 0 & 0 & 0.01961 & 0 & 0 \\
0 & 0 & 0.9167 & 0.08333 & 0 & 0 & 0 & 0 & 0 & 0 & 0 & 0 & 0 & 0 \\
0 & 0.04167 & 0.00625 & 0.9521 & 0 & 0 & 0 & 0 & 0 & 0 & 0 & 0 & 0 & 0 \\
0 & 0.03333 & 0 & 0 & 0.9667 & 0 & 0 & 0 & 0 & 0 & 0 & 0 & 0 & 0 \\
0 & 0 & 0 & 0 & 0 & 0.9286 & 0.07143 & 0 & 0 & 0 & 0 & 0 & 0 & 0 \\
0 & 0 & 0 & 0 & 0 & 0.01042 & 0.9792 & 0.01042 & 0 & 0 & 0 & 0 & 0 & 0 \\
0 & 0.0404 & 0 & 0 & 0 & 0 & 0.0101 & 0.9192 & 0 & 0.0303 & 0 & 0 & 0 & 0 \\
0 & 0 & 0 & 0 & 0 & 0 & 0 & 0 & 0.9487 & 0.05128 & 0 & 0 & 0 & 0 \\
0 & 0 & 0 & 0 & 0 & 0 & 0 & 0.01923 & 0.02976 & 0.9295 & 0.001374 & 0.02015 & 0 & 0 \\
0 & 0 & 0 & 0 & 0 & 0 & 0 & 0 & 0 & 0.07143 & 0.9286 & 0 & 0 & 0 \\
0 & 0.001603 & 0 & 0 & 0 & 0 & 0 & 0 & 0 & 0.05536 & 0 & 0.9378 & 0.004371 & 0.0008741 \\
0 & 0 & 0 & 0 & 0 & 0 & 0 & 0 & 0 & 0 & 0 & 0.07407 & 0.9259 & 0 \\
0 & 0 & 0 & 0 & 0 & 0 & 0 & 0 & 0 & 0 & 0 & 0.04167 & 0 & 0.9583
\end{array}
\right].$
}
\end{center}
Separating the nodes in each cluster into those corresponding to authors and those corresponding to publications, we obtain 14 clusters of authors, of sizes 4, 7, 3, 4, 6, 2, 5, 11, 8, 5, 3, 14, 10, and 4, and 14 clusters of publications, of sizes 4, 10, 3, 12, 9, 5, 7, 22, 5, 23, 4, 38, 17, and 8.  Figure~\ref{fig:authors} shows the matrix $D$ after 0, 1, 2, and 14 iterations of Algorithm~\ref{alg}, with indices permuted so that clusters of authors and publications comprise contiguous blocks of rows and columns (respectively).

\begin{figure}
\centering
\includegraphics[scale=.38]{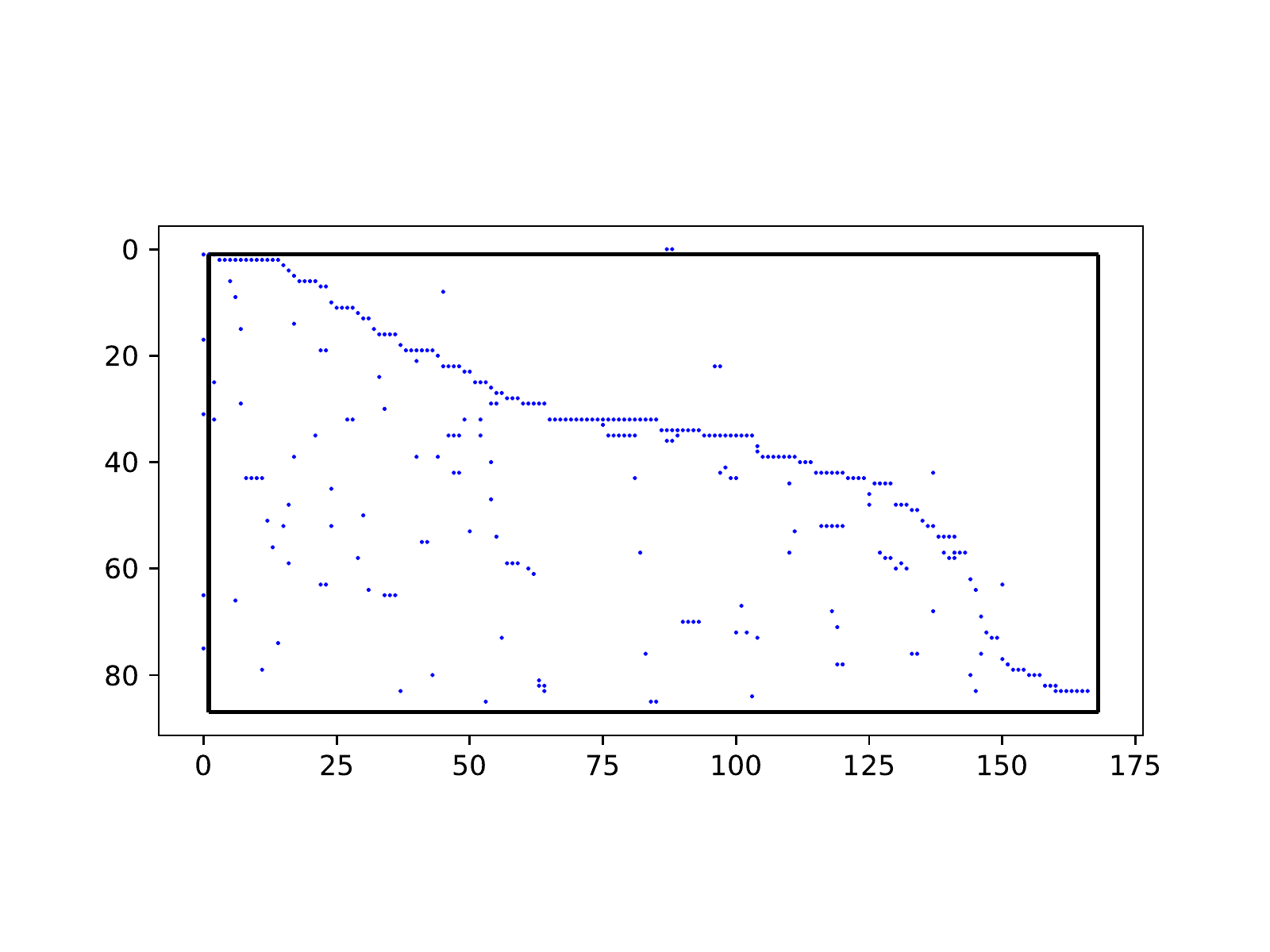}
\includegraphics[scale=.38]{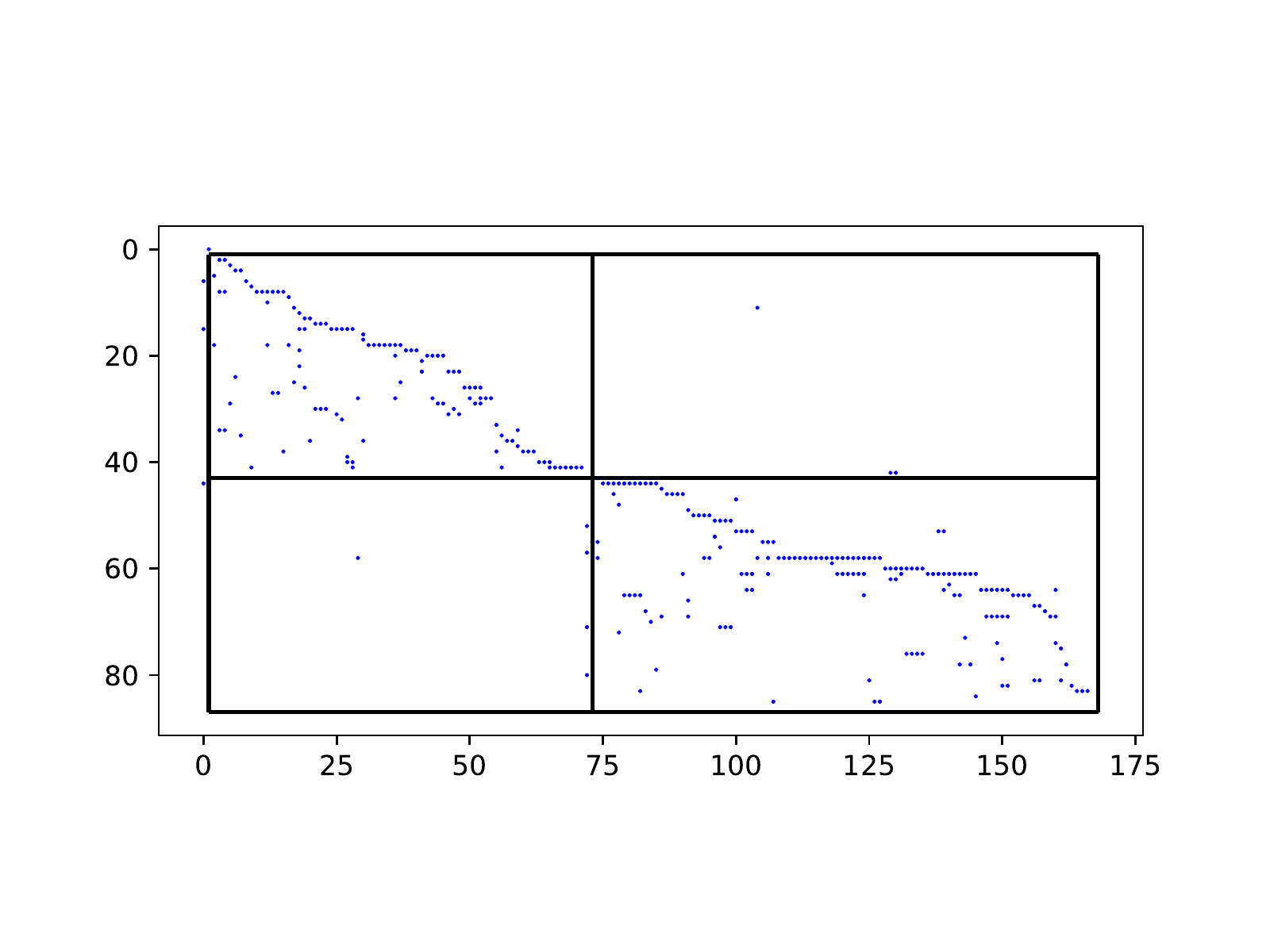}
\includegraphics[scale=.38]{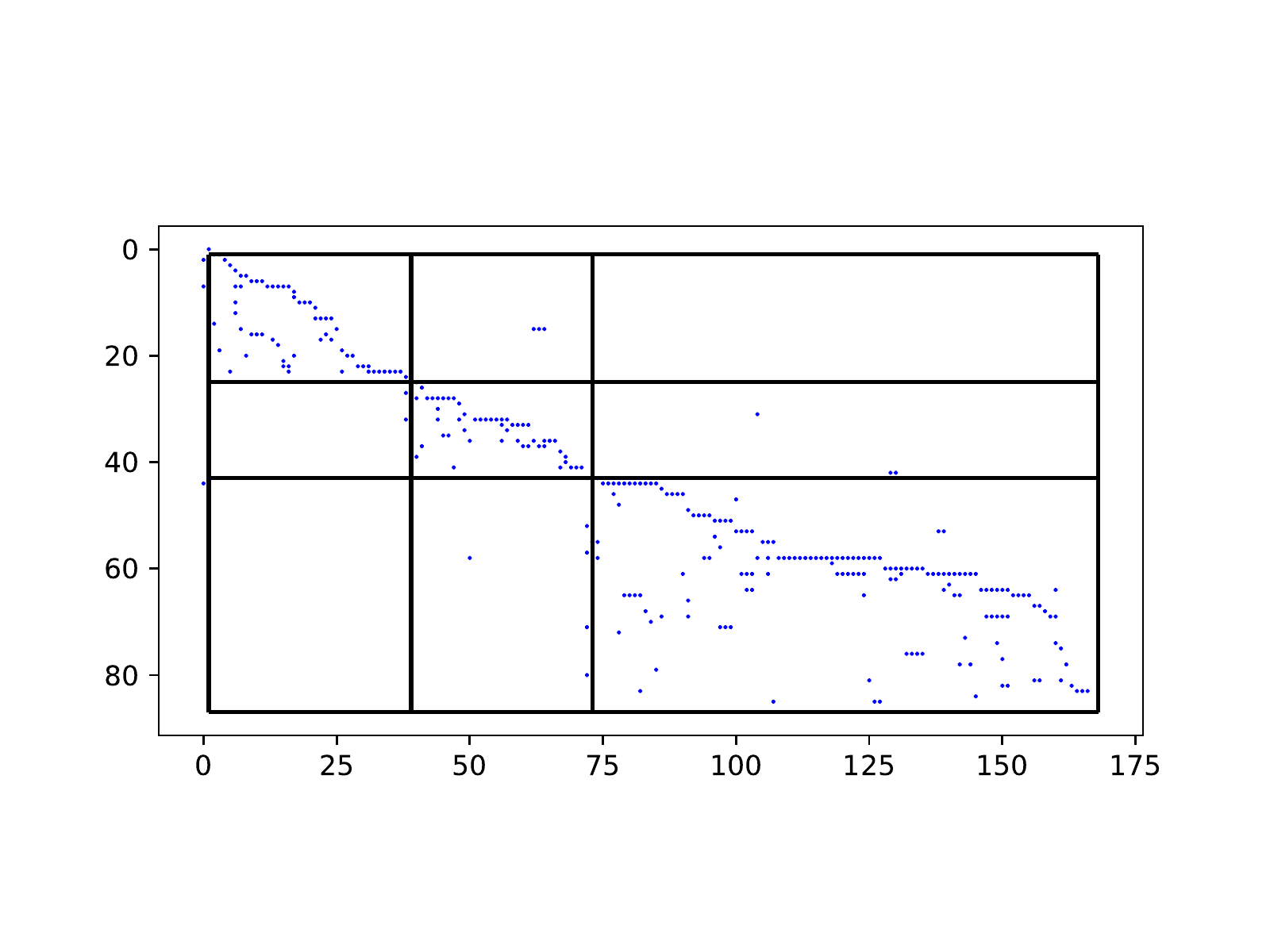}
\includegraphics[scale=.38]{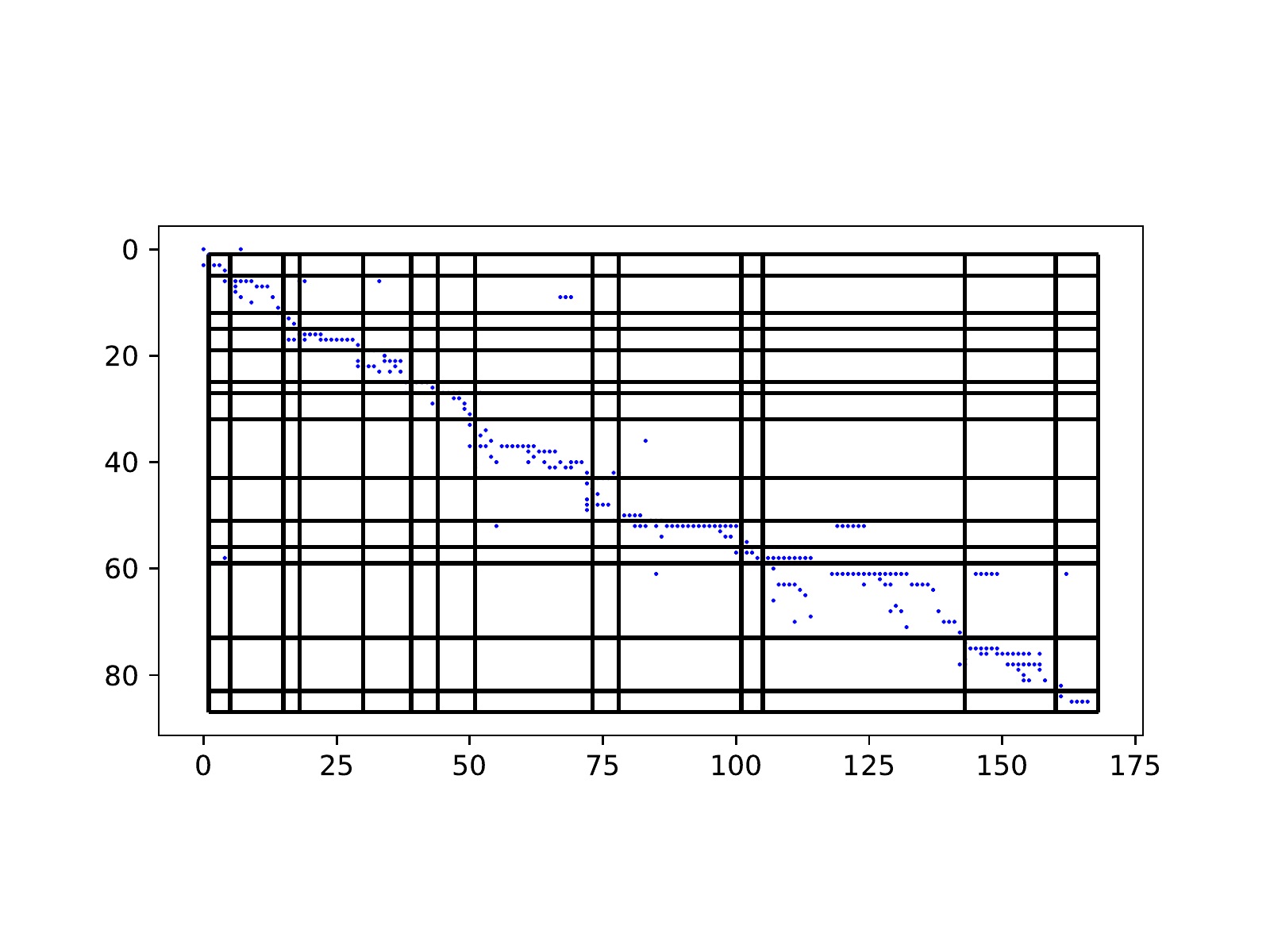}

\caption{The matrix $D \in \{0, 1\}^{86 \times 167}$ representing the authors--publications network after 0, 1, 2, and 14 iterations of Algorithm~\ref{alg}.}\label{fig:authors}
\end{figure}
}

\subsection{Metastable conformations of molecules}

In this section we apply Algorithm~\ref{alg} to the task of identifying metastable conformations of molecules.  We used the n-pentane example presented in~\cite{DHFS,fritzsche2008preprint,fritzsche2008svd}.  We ran our algorithm on the transition matrices generated from n-pentane at two different energy levels (Ph300 and Ph500).  We use the average diagonal entry of the coupling matrix to evaluate the strength of a clustering: a higher average diagonal entry indicates that the coupling matrix is ``more'' diagonally dominant, hence the Markov chain is more strongly clustered.

Our results were as follows.
\begin{enumerate}
\item	Ph300, $\tau = .1$: Our results were comparable to those obtained in~\cite[Section~6.2]{fritzsche2008preprint}.  Our algorithm identified 7 clusters, of sizes 36. 30, 60, 23, 33, 22, and 51 (see Figure~\ref{fig:Ph300}), while in~\cite{fritzsche2008preprint} they obtained 7 clusters, of sizes 46, 24, 36, 20, 42, 47, and 40.  Using the left-iterative weight vector $v$, we get the coupling matrix
\[W_v = 
\begin{bmatrix}
0.9420    & 0.0289    & 0.0003    & 0.0000    & 0.0015    & 0.0197    & 0.0076 \\
0.0310    & 0.9258    & 0.0420    & 0.0000    & 0.0003    & 0.0007    & 0.0002 \\
0.0001    & 0.0030    & 0.9821    & 0.0000    & 0.0017    & 0.0101    & 0.0030 \\
0.0000    & 0.0000    & 0.0000    & 0.9308    & 0.0279    & 0.0002    & 0.0411 \\
0.0013    & 0.0034    & 0.0044    & 0.0145    & 0.9555    & 0.0206    & 0.0003 \\
0.0039    & 0.0044    & 0.0096    & 0.0000    & 0.0042    & 0.9705    & 0.0073 \\
0.0010    & 0.0001    & 0.0027    & 0.0029    & 0.0001    & 0.0115    & 0.9817
\end{bmatrix},
\]
while using the ones vector we get
\[W_\1 = 
\begin{bmatrix}
0.7857	& 0.0869    & 0.0015    & 0.0001    & 0.0067    & 0.0799    & 0.0392 \\
0.2015  & 0.6107    & 0.0792    & 0.0001    & 0.0271    & 0.0739    & 0.0076 \\
0.0045  & 0.0426    & 0.7710    & 0.0012    & 0.0546    & 0.0975    & 0.0286 \\
0.0031  & 0.0009    & 0.0062    & 0.7149    & 0.2175    & 0.0038    & 0.0536 \\
0.0056  & 0.0128    & 0.0216    & 0.0535    & 0.8462    & 0.0593    & 0.0009 \\
0.0274  & 0.0311    & 0.0363    & 0.0001    & 0.0734    & 0.7986    & 0.0331 \\
0.0290  & 0.0004    & 0.0306    & 0.0494    & 0.0020    & 0.0937    & 0.7948
\end{bmatrix}.\]
These coupling matrices have average diagonal entries .9555 and .7603, respectively, while $W_\pi$ and $W_1$ computed in~\cite{fritzsche2008preprint} have average diagonal entries .9524 and .7792, respectively (where $W_\pi$ is the coupling matrix with respect to the stationary distribution of the transition matrix).

\begin{figure}
\centering
\includegraphics[scale=.33]{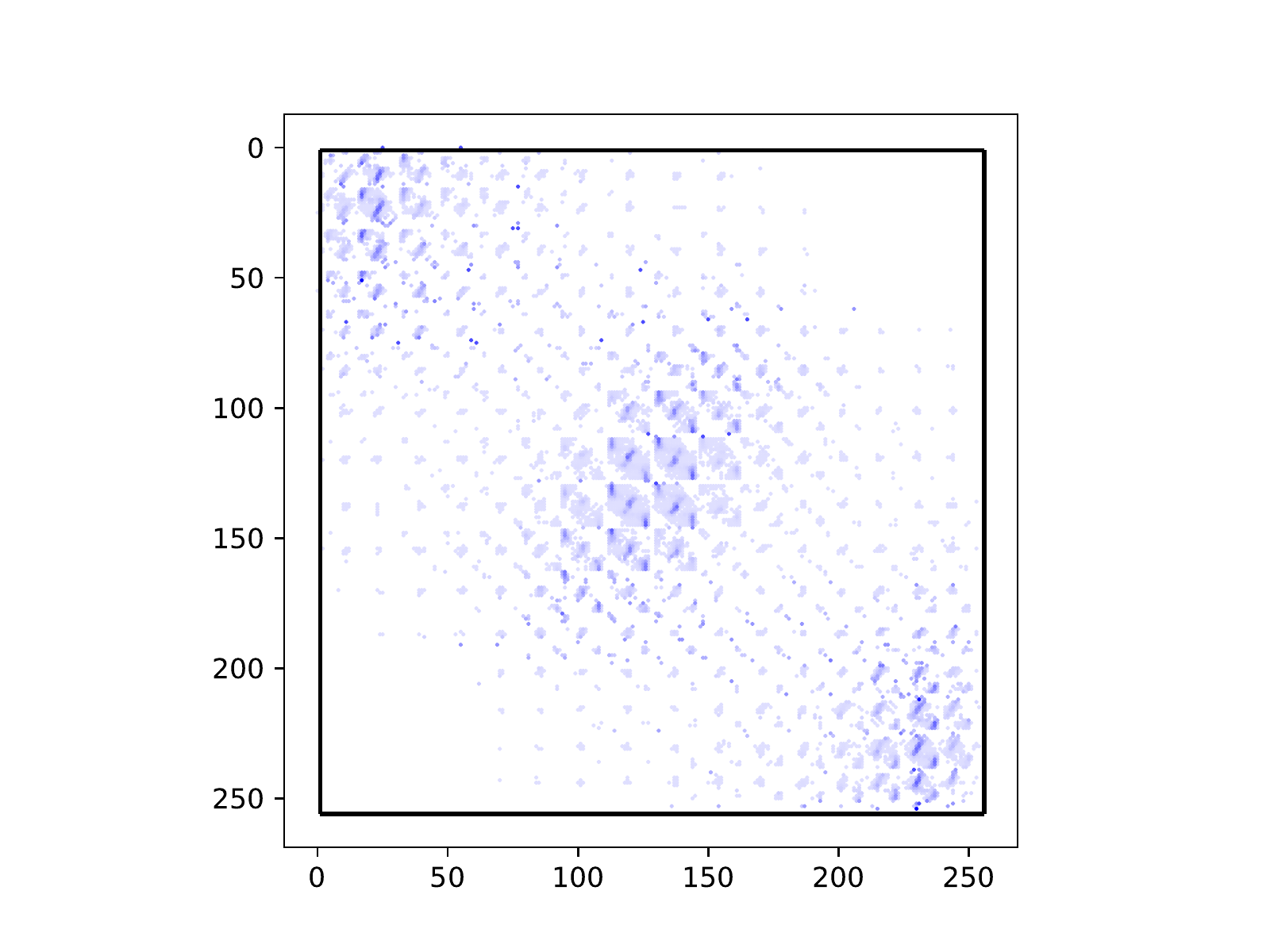}
\includegraphics[scale=.33]{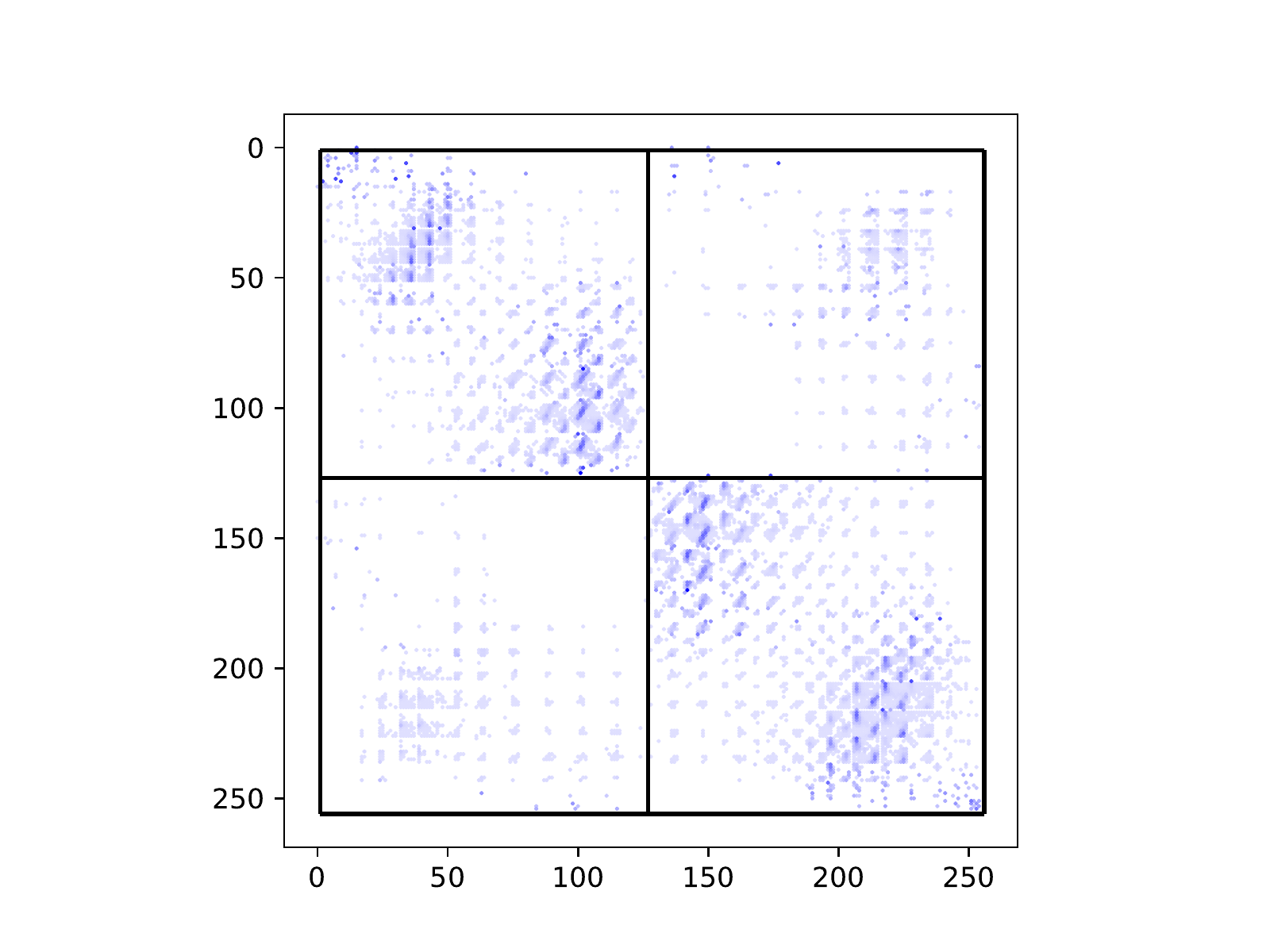}
\includegraphics[scale=.33]{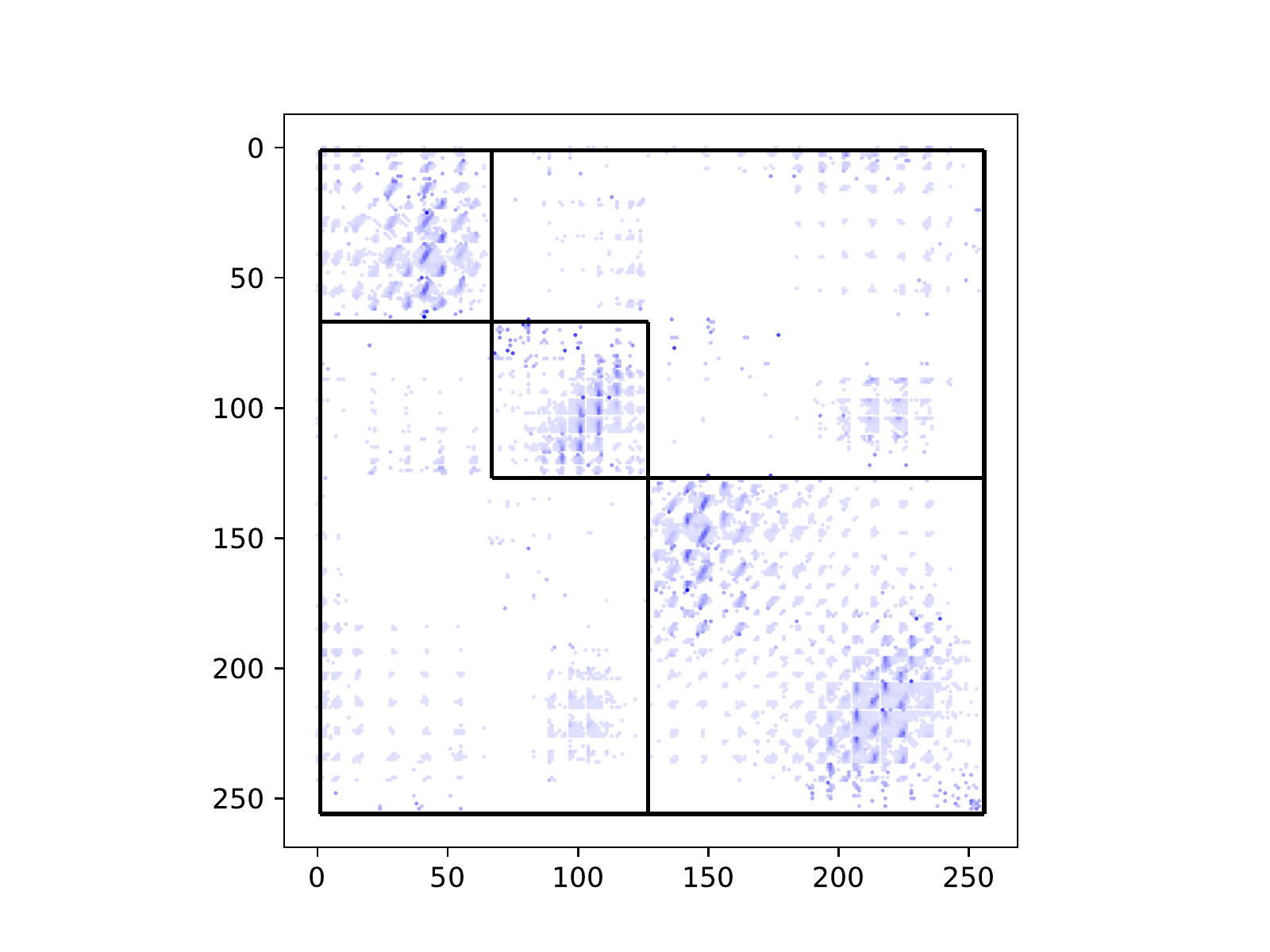}
\includegraphics[scale=.33]{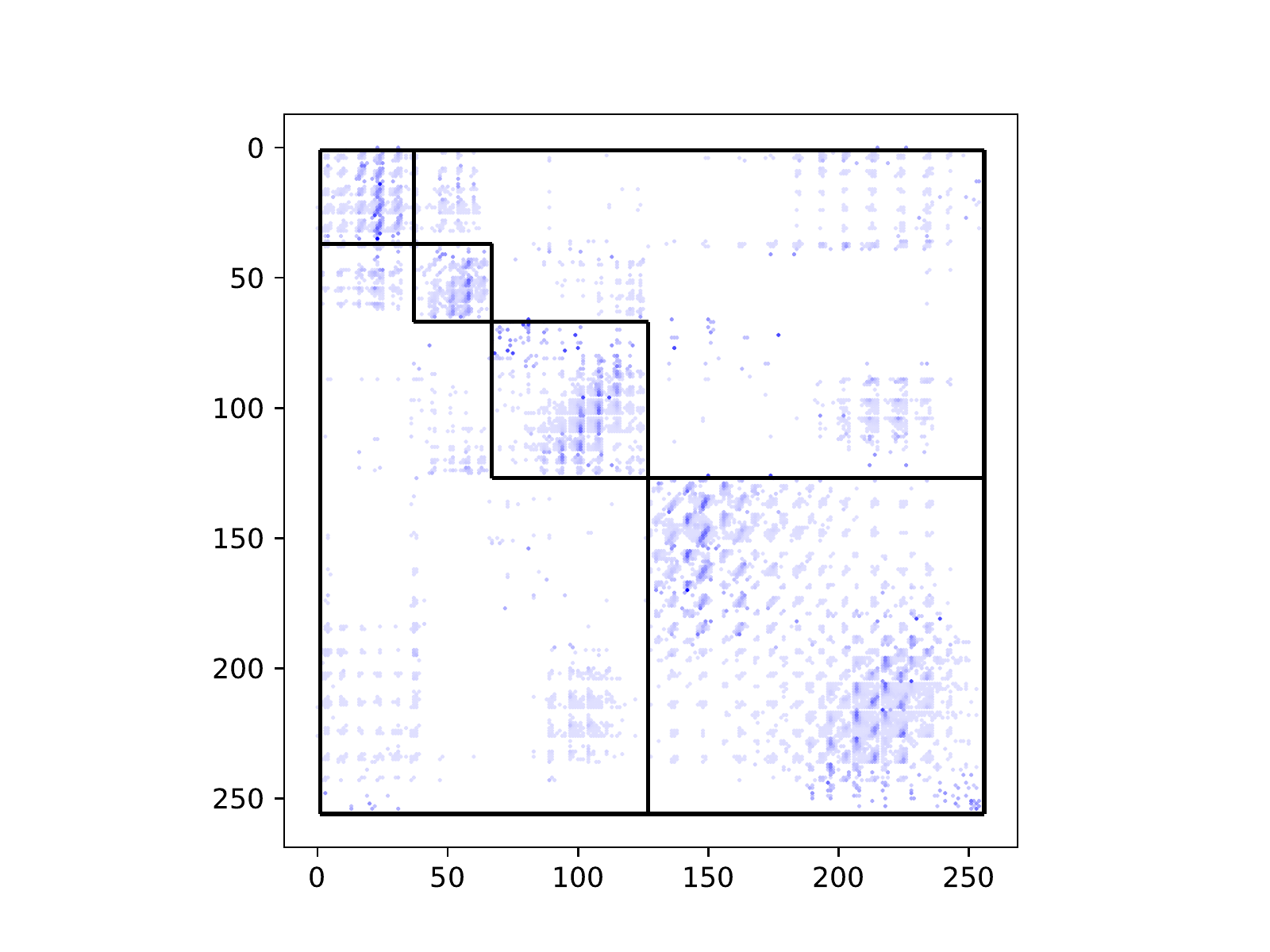}
\includegraphics[scale=.33]{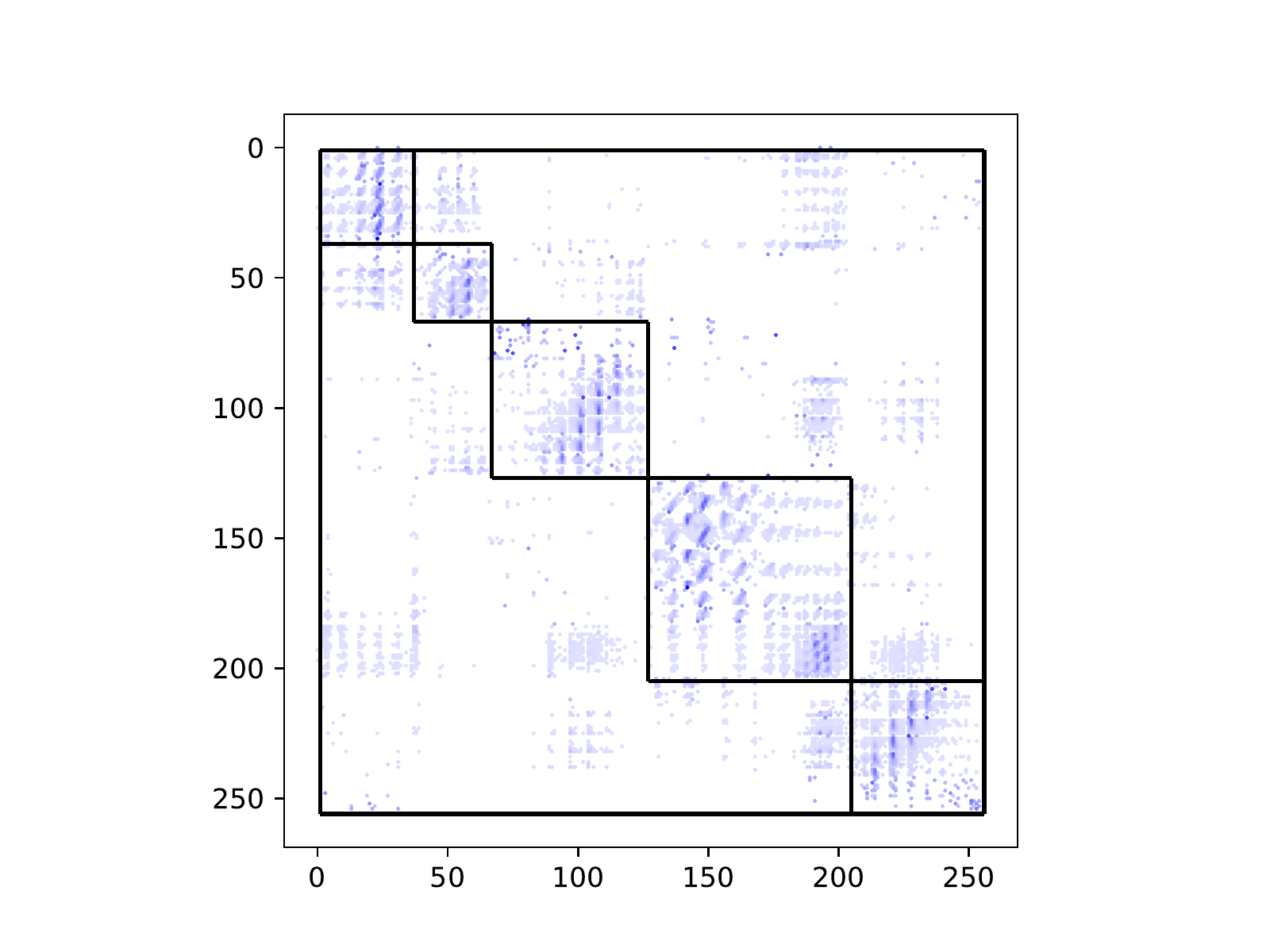}
\includegraphics[scale=.33]{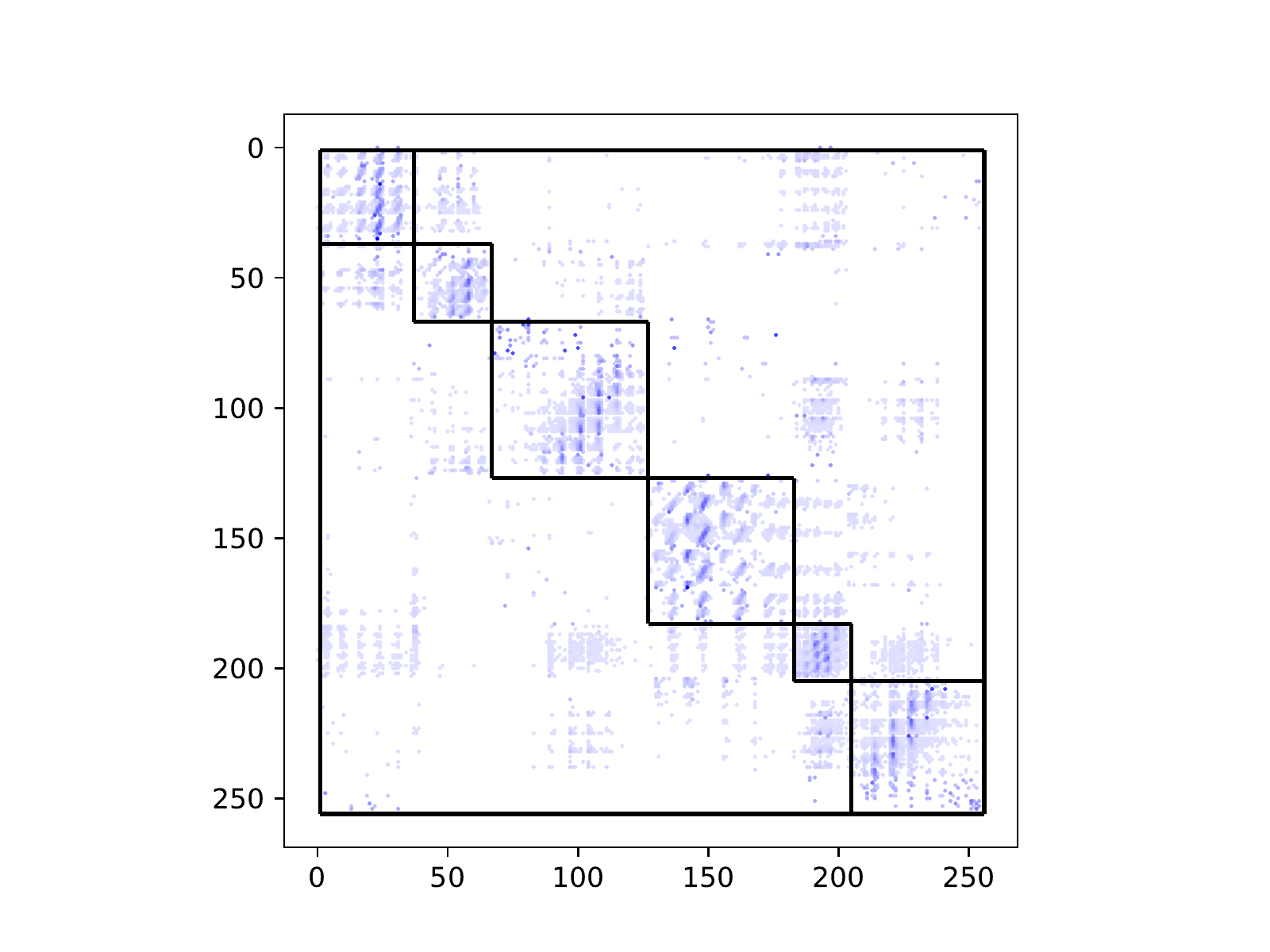}
\includegraphics[scale=.33]{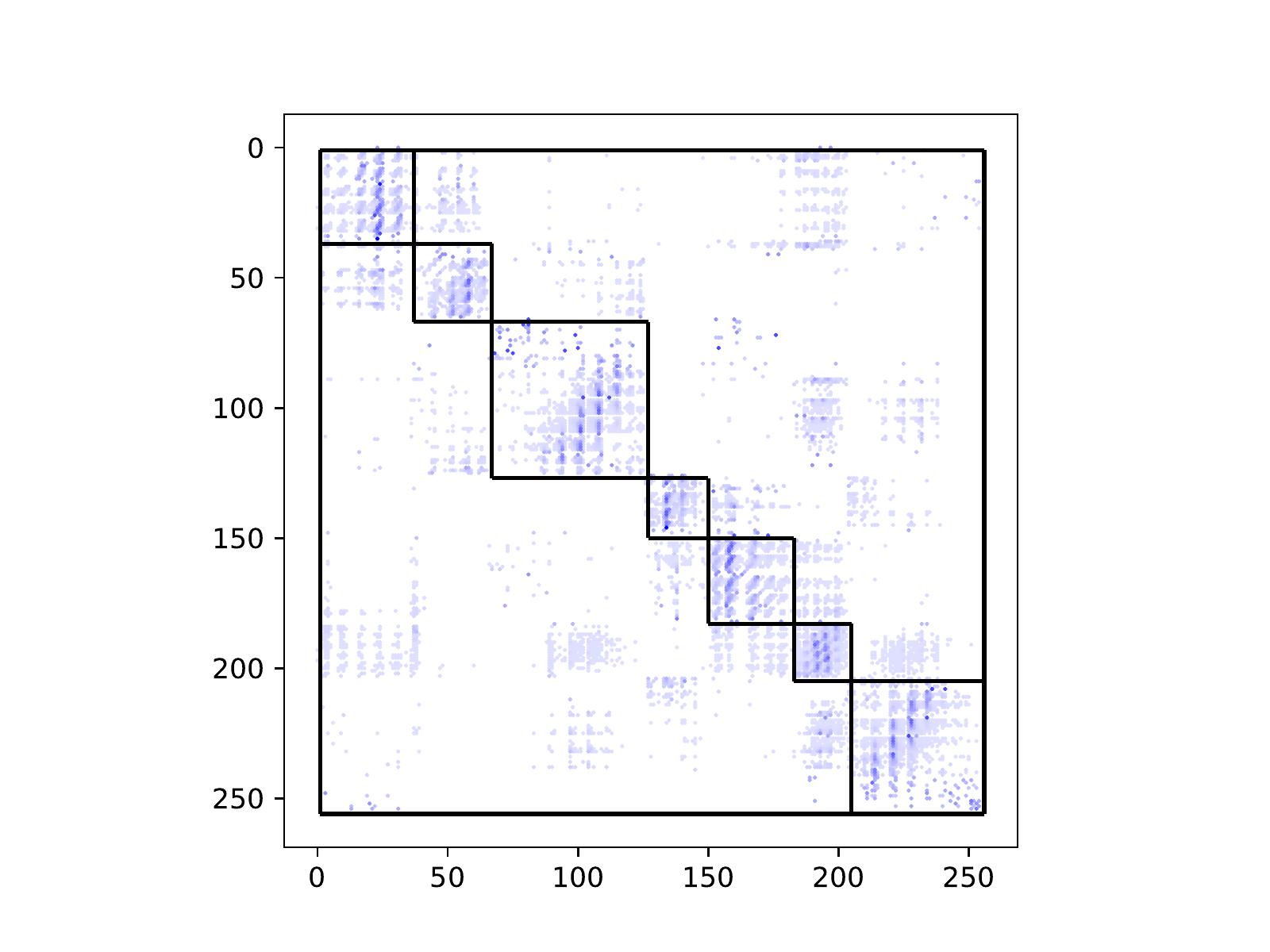}
\caption{Ph300, $\tau = .1$.  Algorithm~\ref{alg} produces 7 clusters, of sizes 36. 30, 60, 23, 33, 22, and 51.  Each image represents the transition matrix of the Markov chain after an iteration of the algorithm.  In each iteration, the states in one of the existing clusters are partitioned into two new clusters, then permuted so that the clusters comprise contiguous blocks of indices.}\label{fig:Ph300}
\end{figure}

\item	Ph500, $\tau = .2$: Our results compare favorably to those obtained in~\cite{fritzsche2008preprint}, using $\1$ in their algorithm's computation.  Our algorithm identified 5 clusters, of sizes 63, 33, 54, 76, and 81 (see Figure~\ref{fig:Ph500}), yielding
\[W_v = 
\begin{bmatrix}
	0.9012    & 0.0212    & 0.0160    & 0.0233    & 0.0383 \\
    0.1218    & 0.7921    & 0.0790    & 0.0045    & 0.0025 \\
    0.0120    & 0.0353    & 0.8708    & 0.0316    & 0.0502 \\
    0.0224    & 0.0017    & 0.0298    & 0.9011    & 0.0451 \\
    0.0268    & 0.0023    & 0.0284    & 0.0182    & 0.9243
\end{bmatrix},\] 
\[W_\1 = 
\begin{bmatrix}
    0.6841    & 0.0862    & 0.0428    & 0.1063    & 0.0806 \\
    0.1378    & 0.6045    & 0.1973    & 0.0311    & 0.0292 \\
    0.0318    & 0.0607    & 0.6789    & 0.0701    & 0.1585 \\
    0.0957    & 0.0042    & 0.0537    & 0.6763    & 0.1701 \\
    0.0477    & 0.0046    & 0.1025    & 0.0902    & 0.7550
\end{bmatrix},
\]
which have average diagonal entries .8779 and .6798, respectively.  In~\cite{fritzsche2008preprint} they produced 5 clusters, of sizes 37, 88, 71, 51, and 60, yielding coupling matrices $W_\pi$ and $W_\1$ with average diagonal entries .7428 and .6239, respectively. Thus, our algorithm significantly outperformed that of~\cite{fritzsche2008preprint} on this example.

\begin{figure}
\centering
\includegraphics[scale=.33]{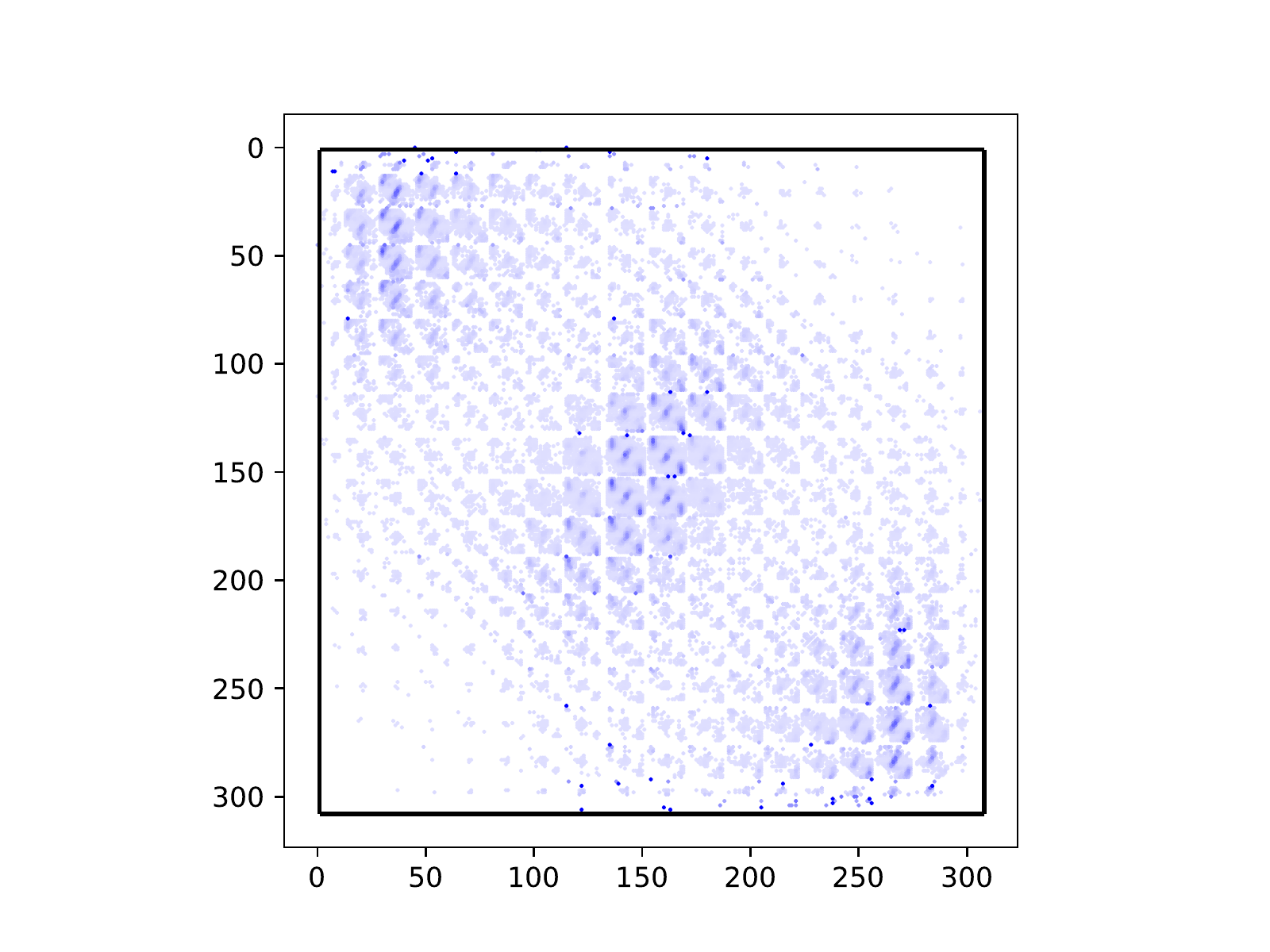}
\includegraphics[scale=.33]{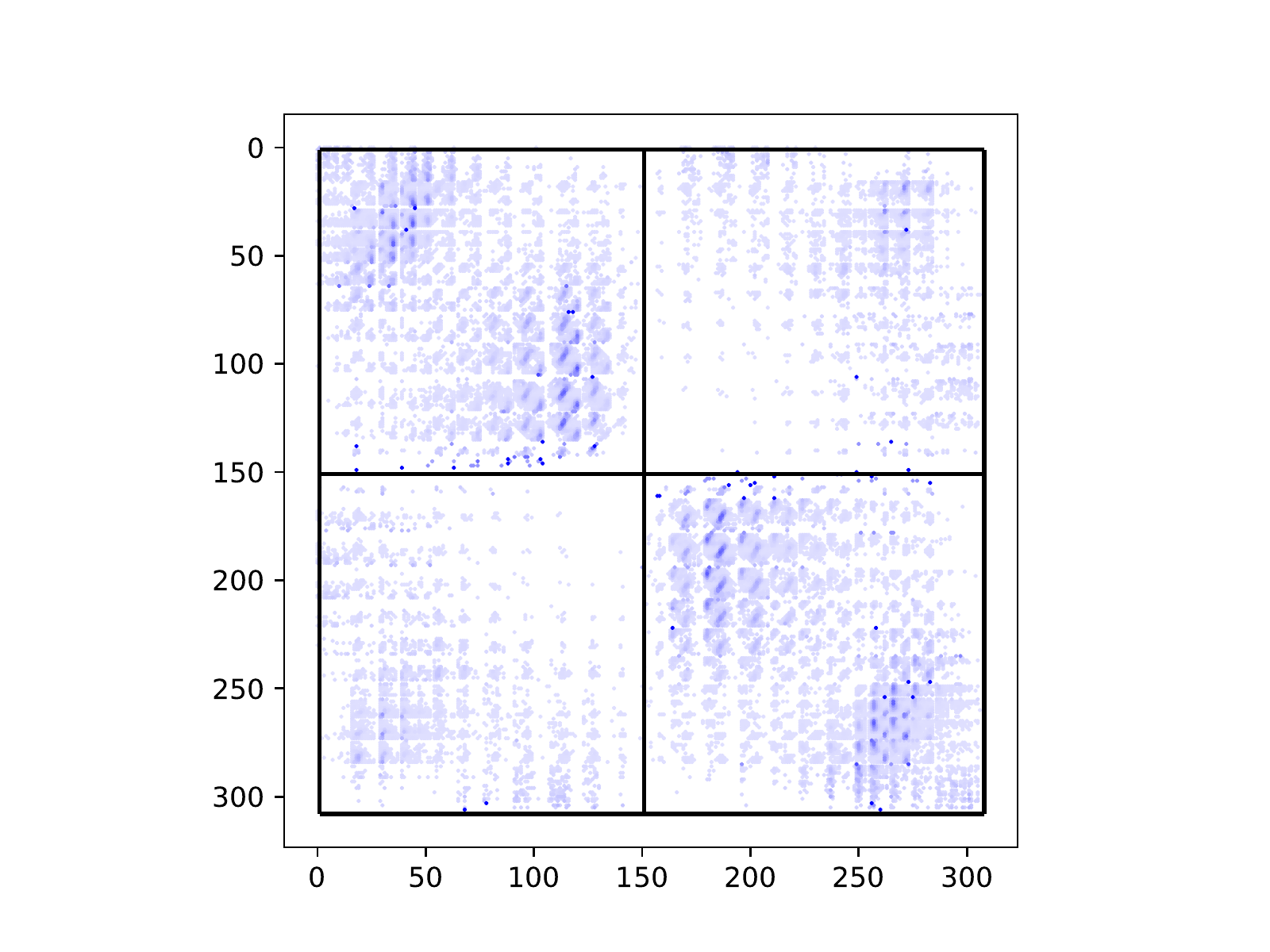}
\includegraphics[scale=.33]{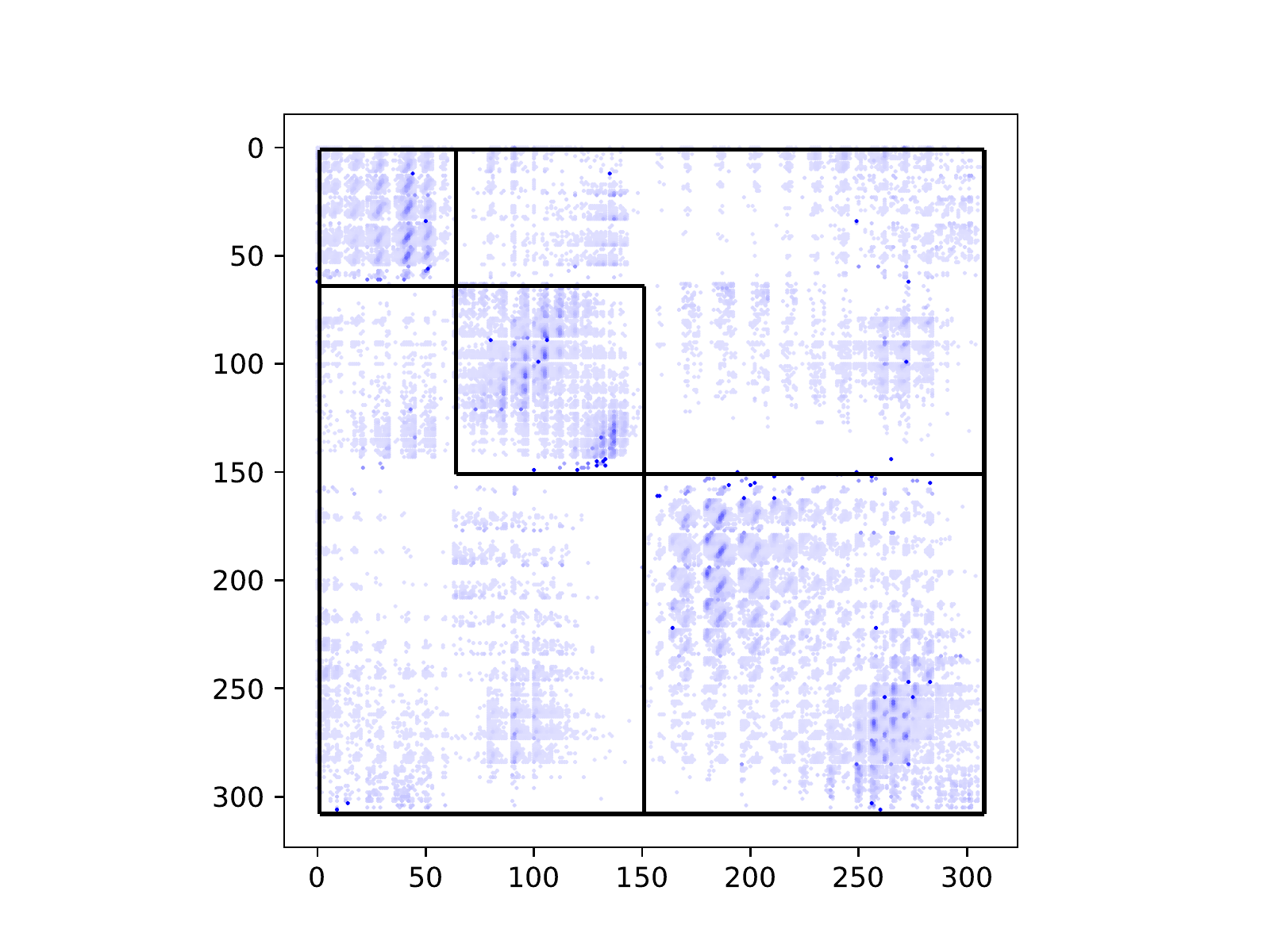}
\includegraphics[scale=.33]{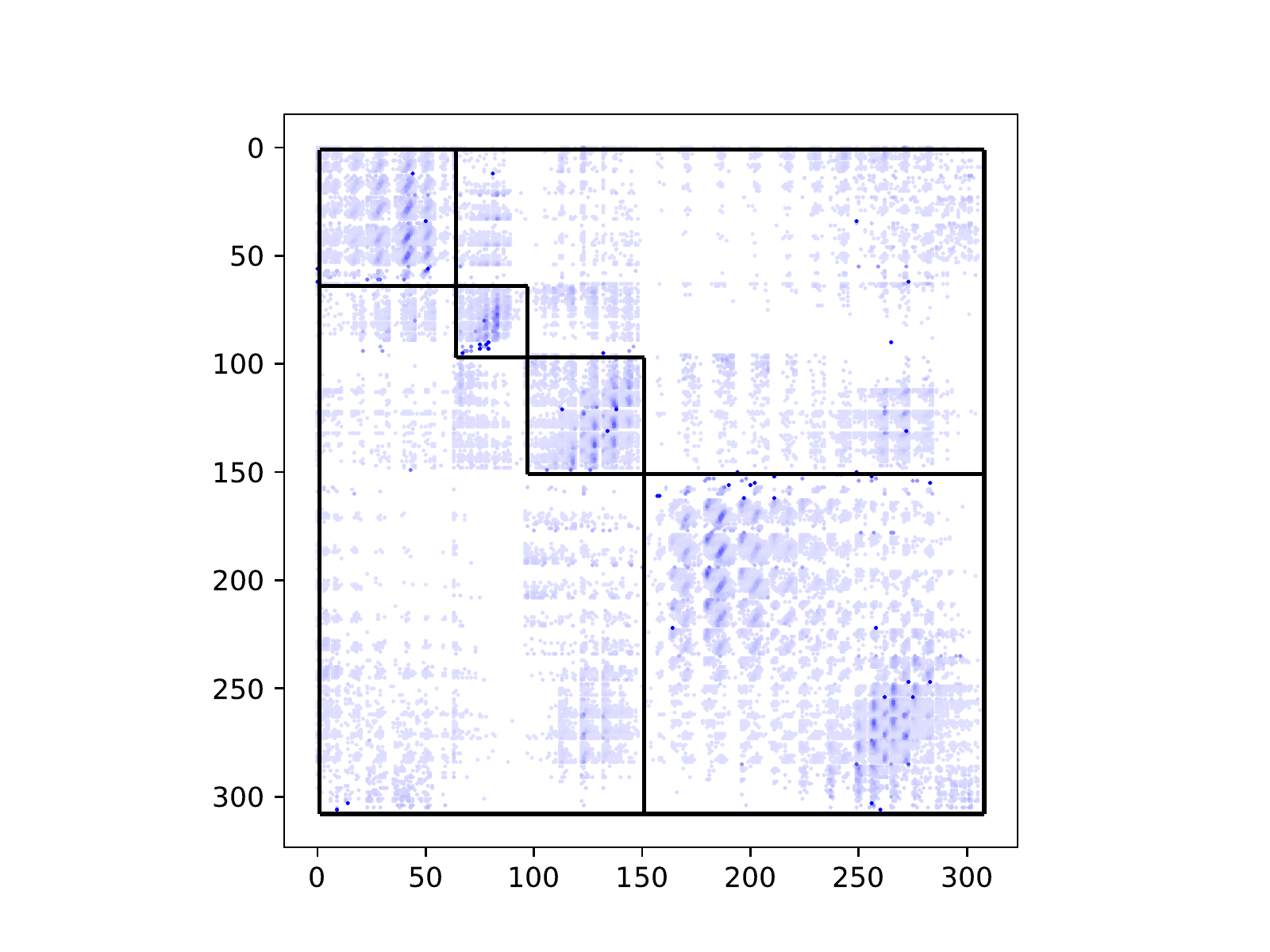}
\includegraphics[scale=.33]{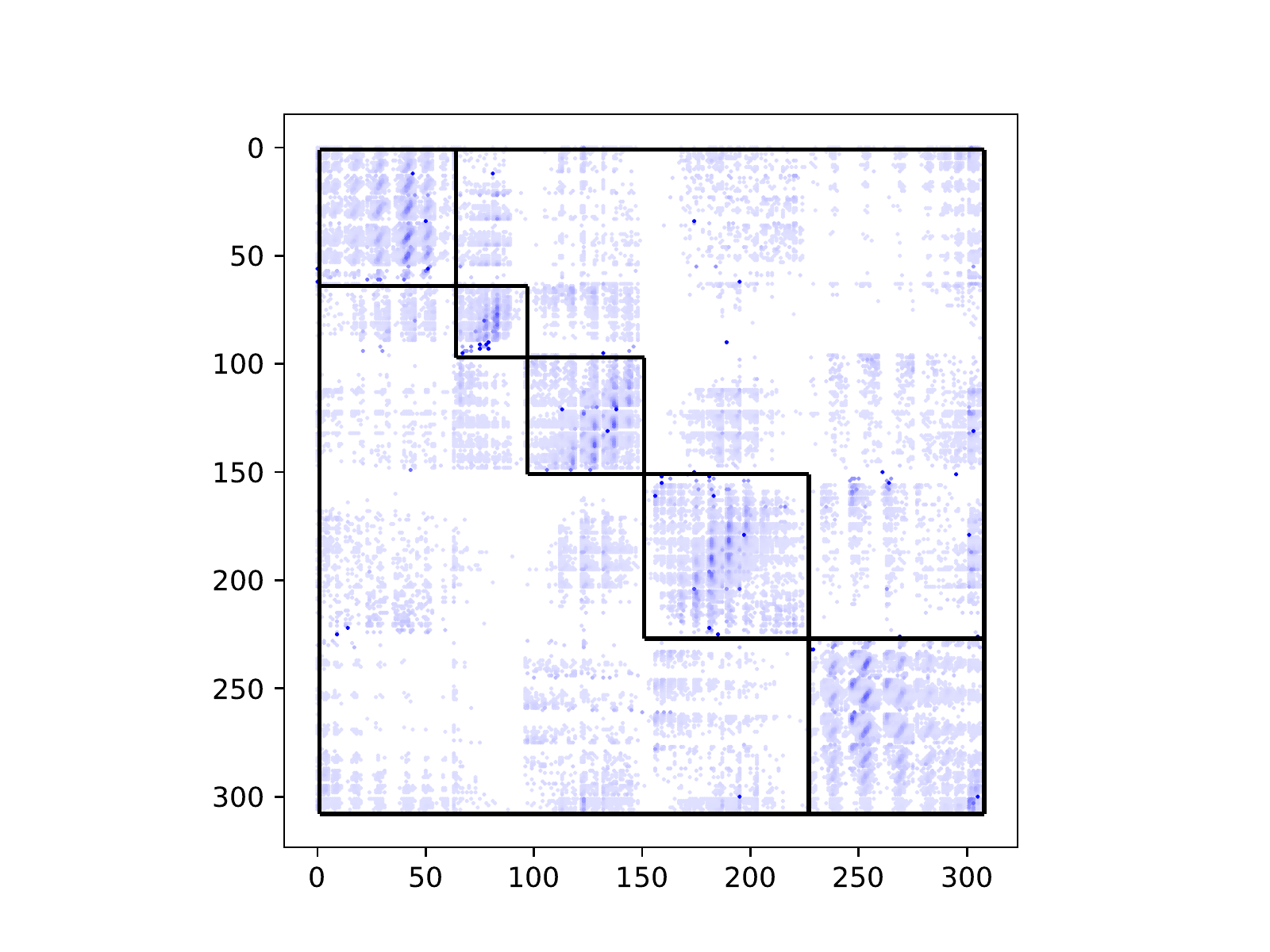}
\caption{Ph500, $\tau = .2$.  We find 5 clusters, of sizes 63, 33, 54, 76, and 81.}\label{fig:Ph500}
\end{figure}

\item	Ph500, $\tau = .3$.  Again, our algorithm outperforms that of~\cite{fritzsche2008preprint}, this time using the stationary vector $\pi$ of the input transition matrix in their algorithm's computations.  We obtained 6 clusters, of sizes 63, 33, 54, 76, 38, and 43 (see Figure~\ref{fig:Ph500_.3}).  These are the same clusters as those produced with $\tau = .2$, except the largest one has been split in two.  This yields the coupling matrices
\[W_v = 
\begin{bmatrix}
    0.9012    & 0.0212    & 0.0160    & 0.0233    & 0.0088    & 0.0296 \\
    0.1218    & 0.7921    & 0.0790    & 0.0045    & 0.0007    & 0.0018 \\
    0.0120    & 0.0353    & 0.8708    & 0.0316    & 0.0180    & 0.0323 \\
    0.0224    & 0.0017    & 0.0298    & 0.9011    & 0.0023    & 0.0428 \\
    0.0308    & 0.0023    & 0.0323    & 0.0089    & 0.7961    & 0.1298 \\
    0.0057    & 0.0005    & 0.0090    & 0.2083    & 0.0883    & 0.6882 
\end{bmatrix}
,\]
\[W_\1 = 
\begin{bmatrix}
    0.6841    & 0.0862    & 0.0428    & 0.1063    & 0.0188    & 0.0618 \\
    0.1378    & 0.6045    & 0.1973    & 0.0311    & 0.0082    & 0.0210 \\
    0.0318    & 0.0607    & 0.6789    & 0.0701    & 0.0622    & 0.0962 \\
    0.0957    & 0.0042    & 0.0537    & 0.6763    & 0.0126    & 0.1576 \\
    0.0404    & 0.0046    & 0.1162    & 0.0136    & 0.6253    & 0.2000 \\
    0.0543    & 0.0045    & 0.0904    & 0.1579    & 0.2265    & 0.4664
\end{bmatrix}
,\]
which have average diagonal entries .8249 and .6226, respectively.  In~\cite{fritzsche2008preprint} they found 6 clusters, of sizes 45, 43, 37, 71, 51, and 60, yielding coupling matrices $W_\pi$ and $W_\1$ with average diagonal entries .7441 and .5856, respectively.

\begin{figure}
\includegraphics[scale=.33]{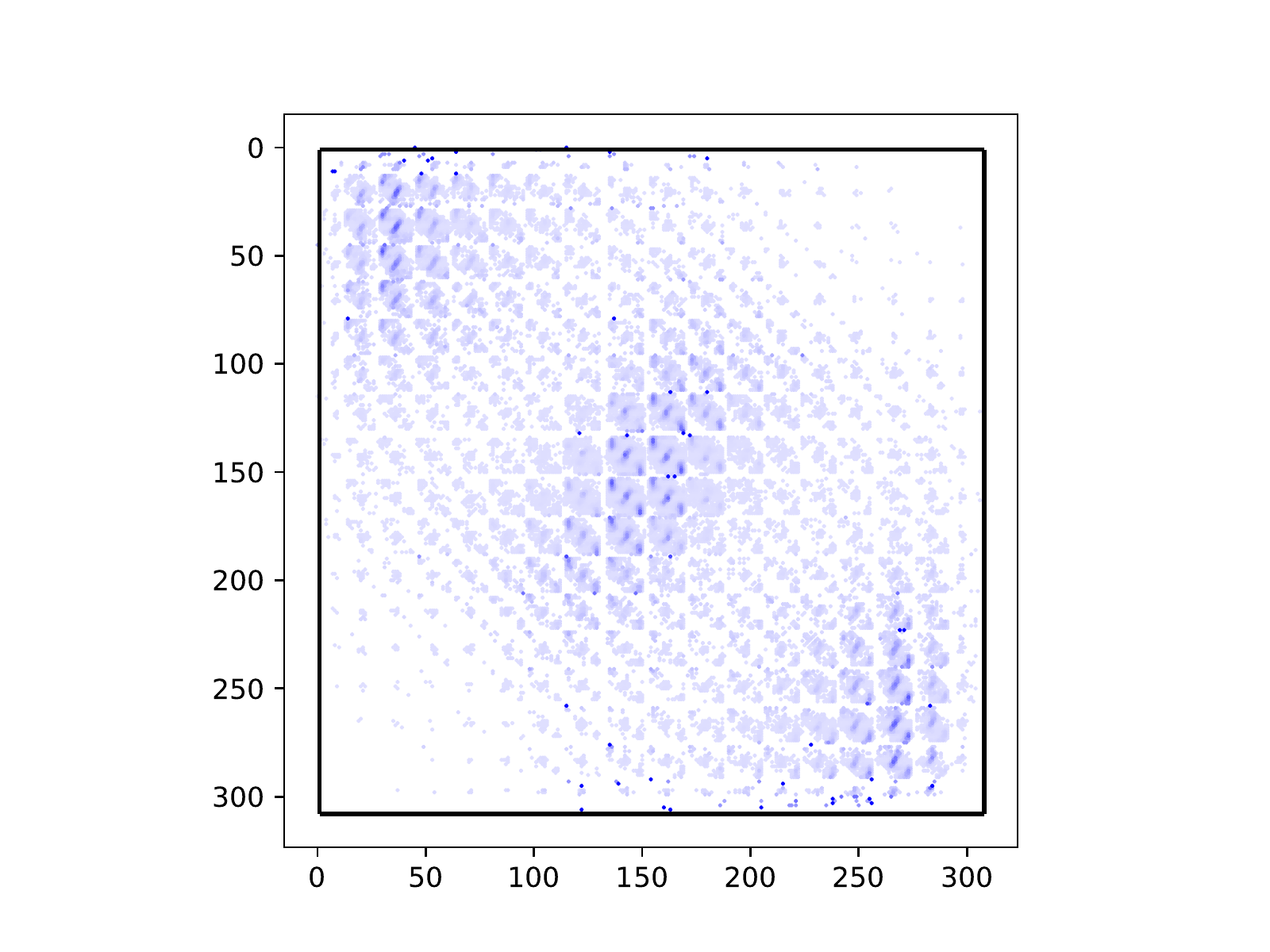}
\includegraphics[scale=.33]{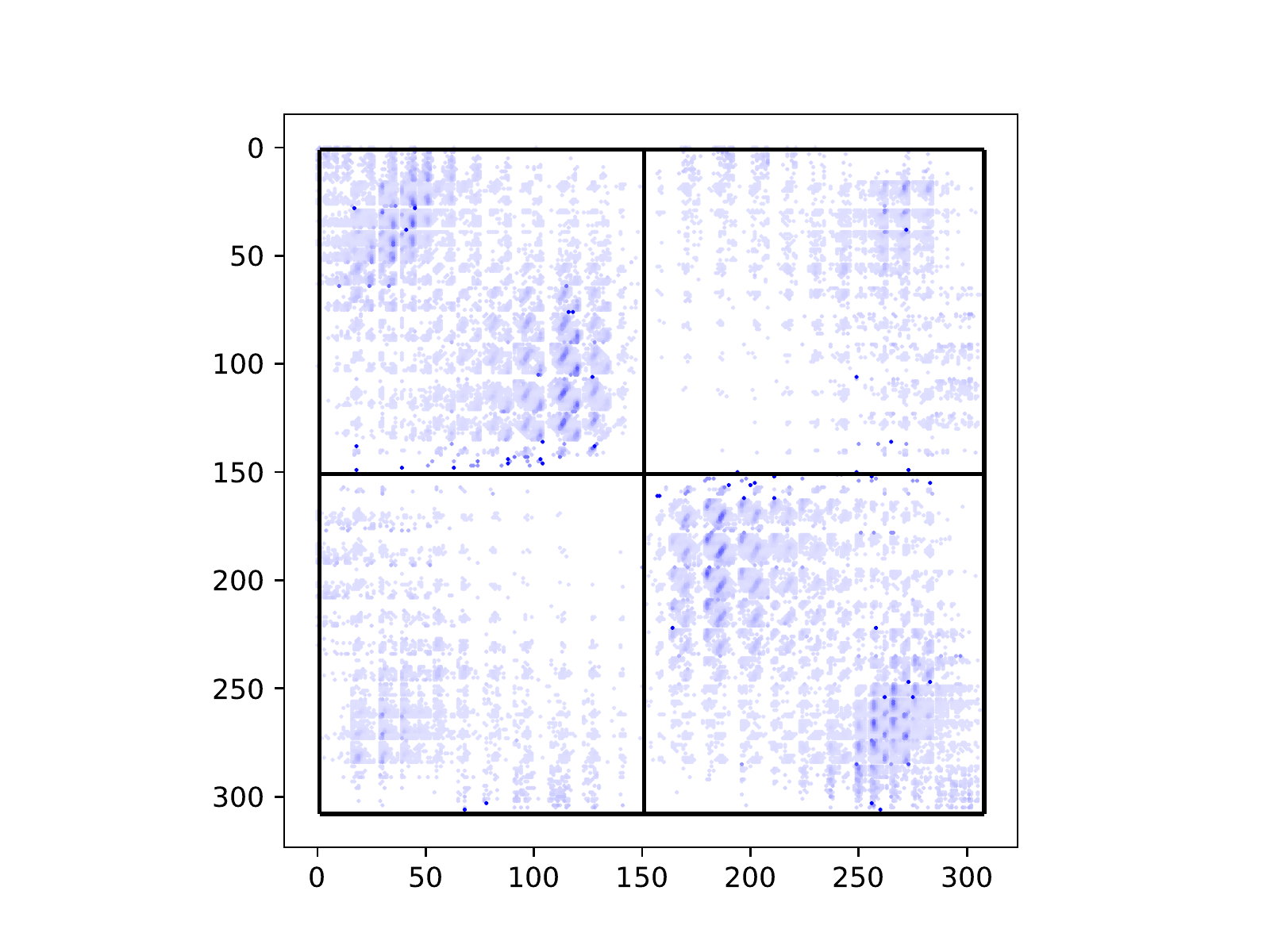}
\includegraphics[scale=.33]{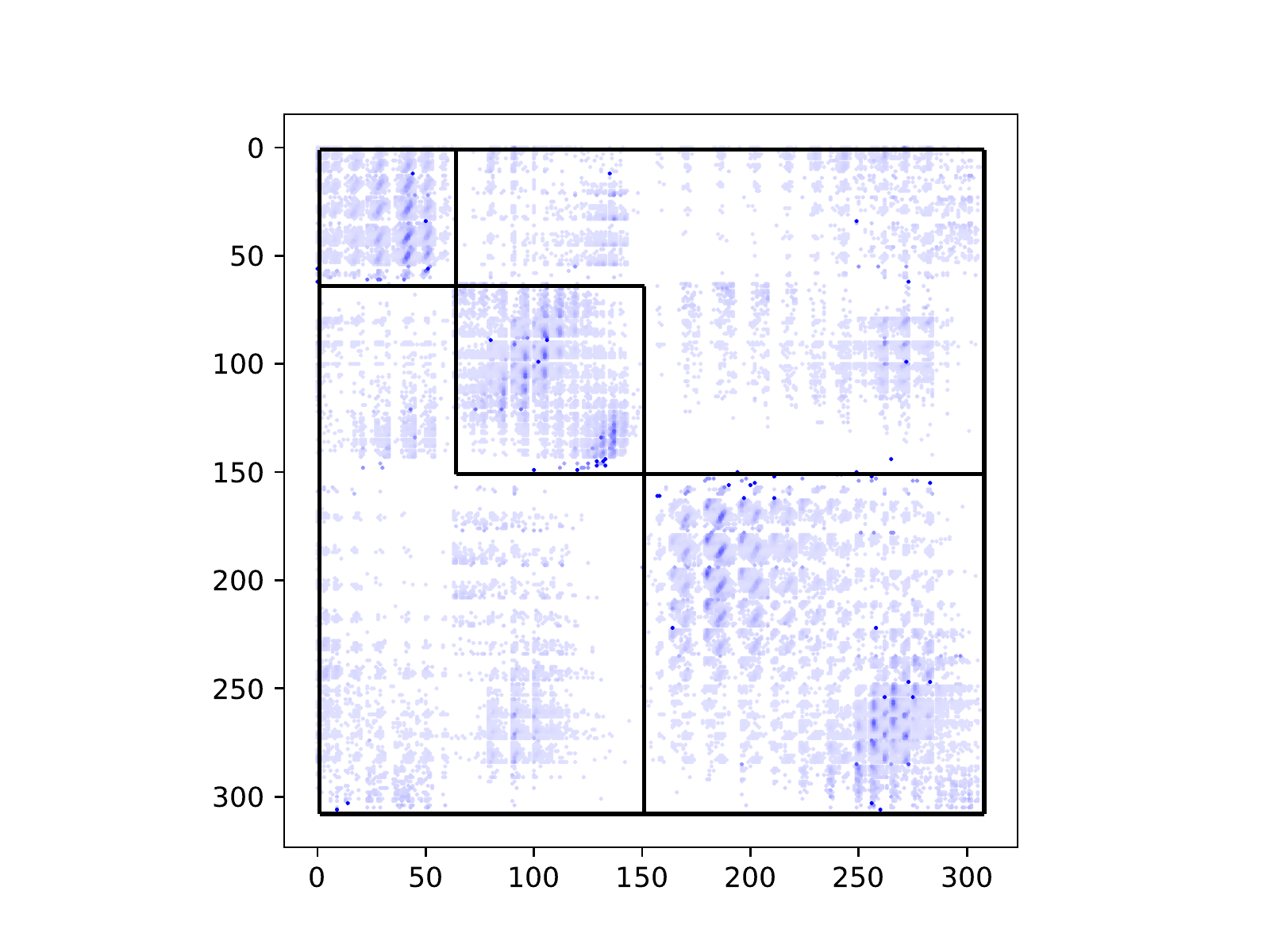}
\includegraphics[scale=.33]{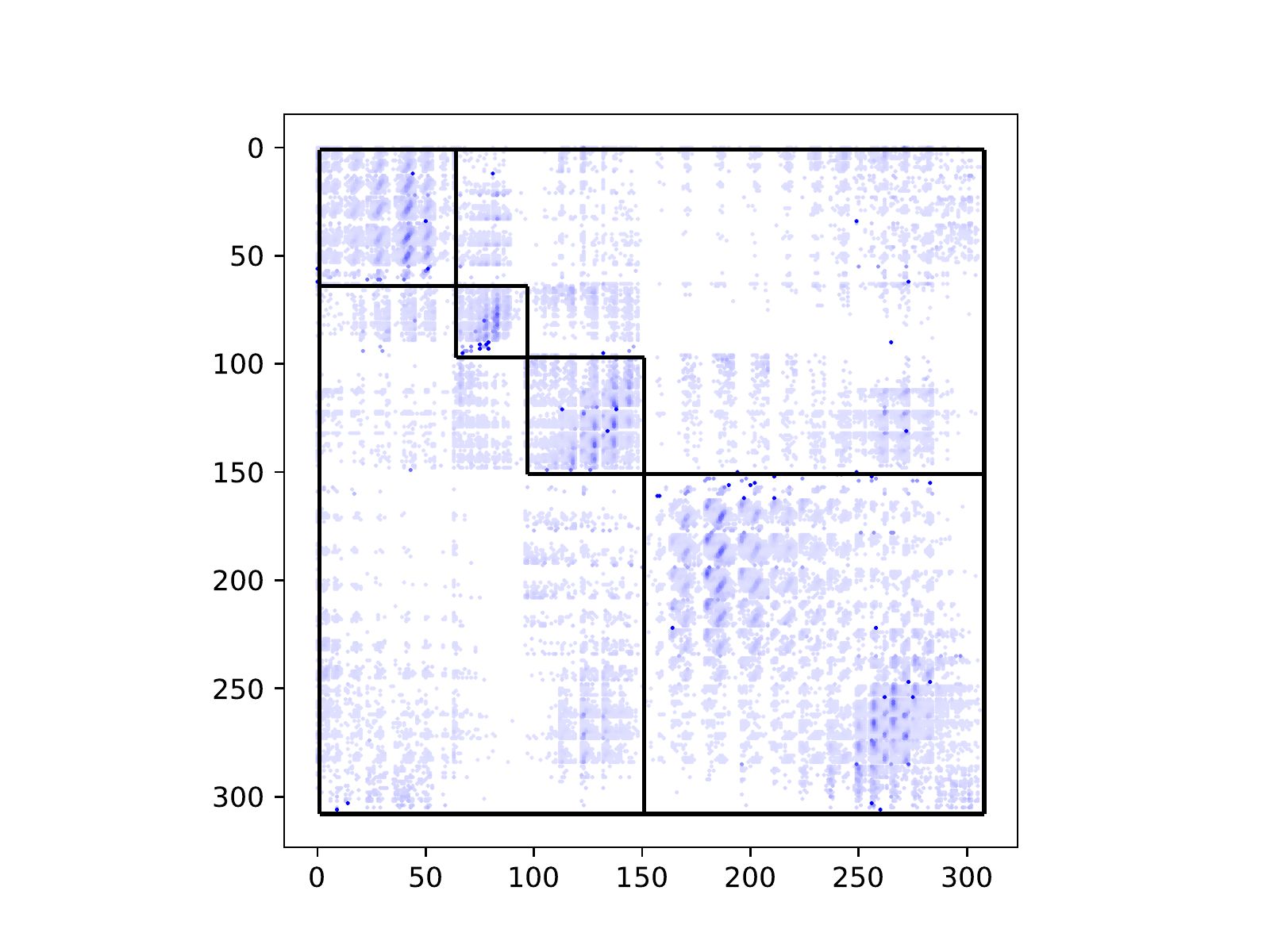}
\includegraphics[scale=.33]{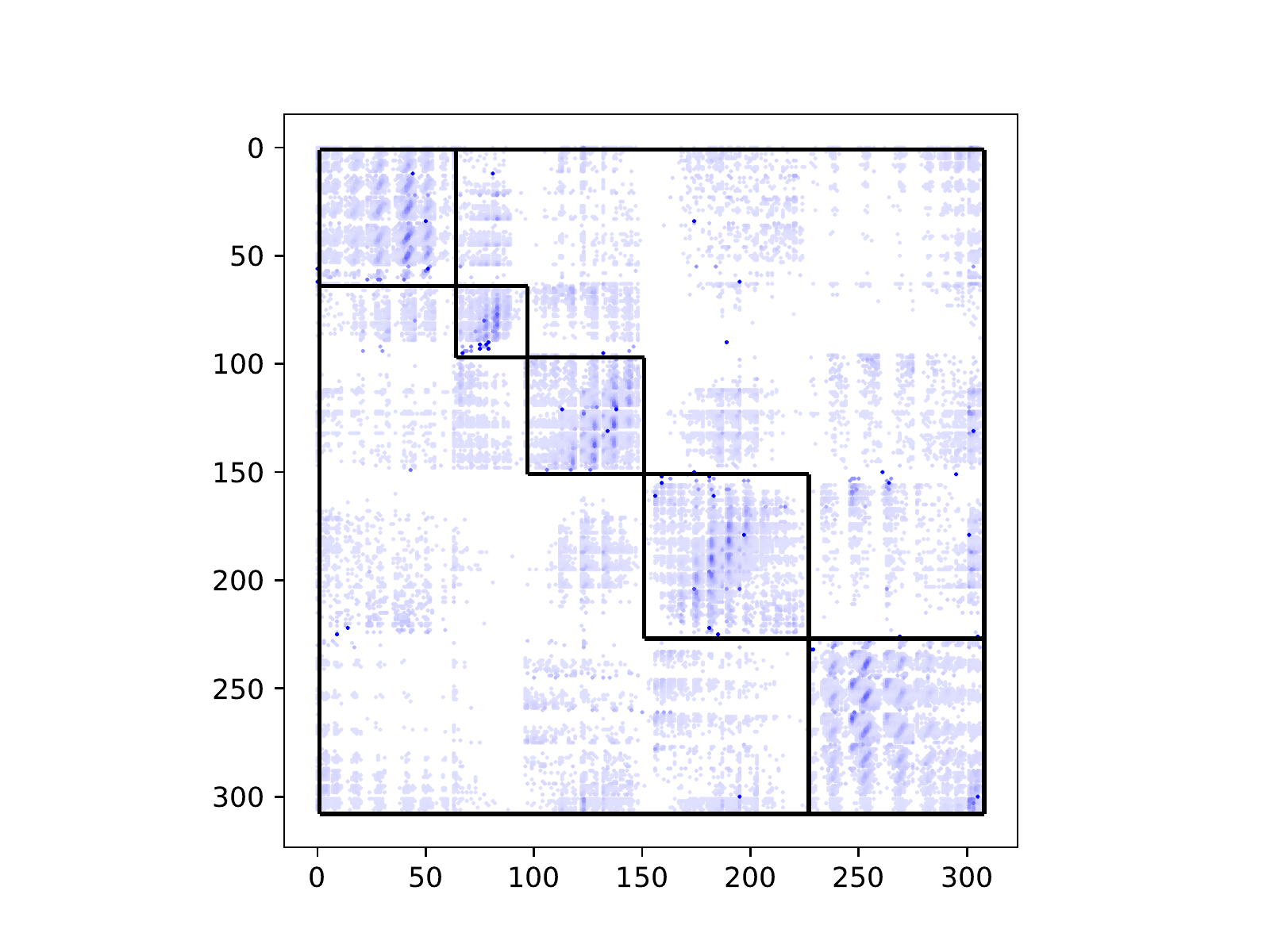}
\includegraphics[scale=.33]{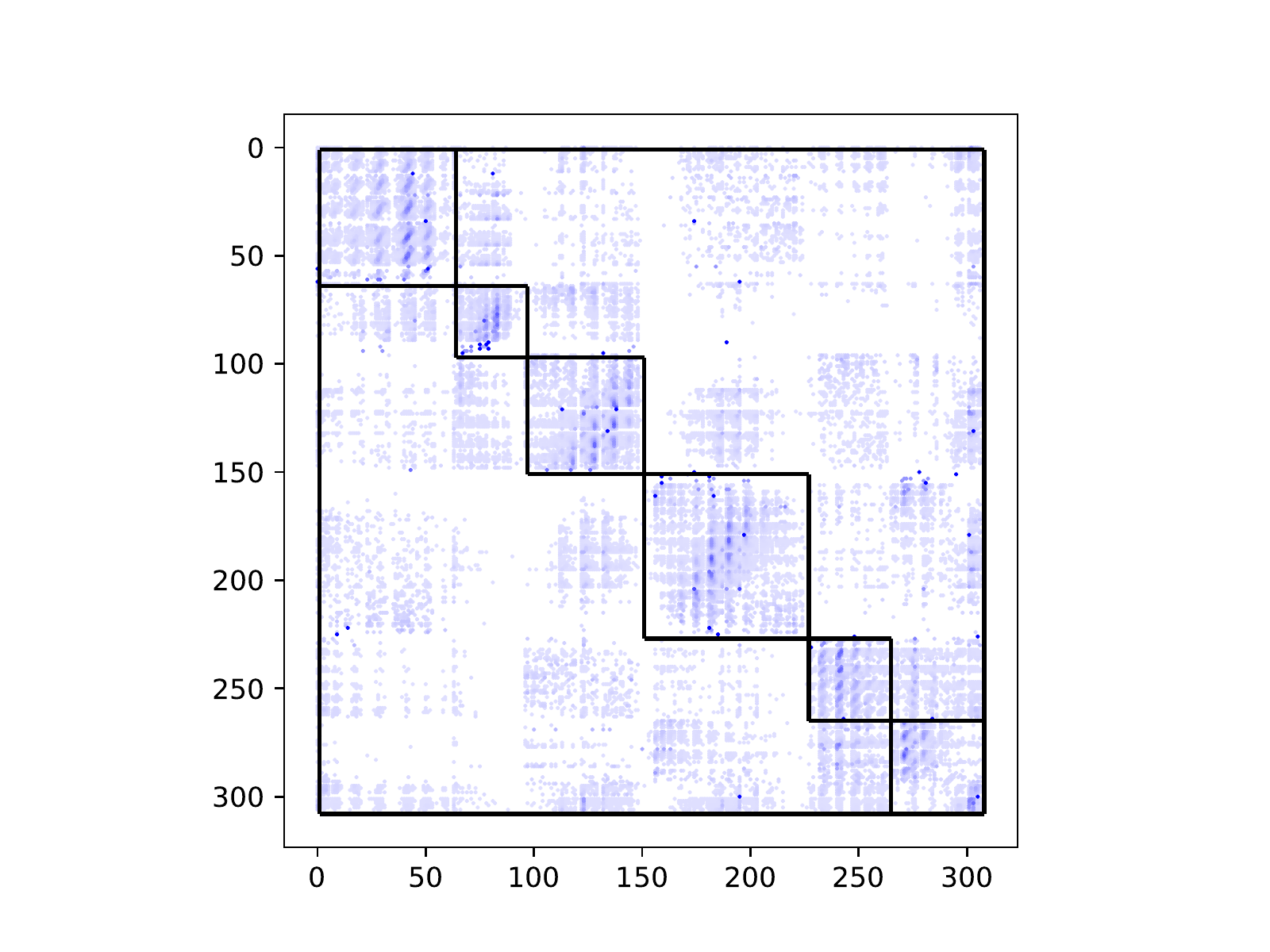}
\caption{Ph500, $\tau = .3$.  We find 6 clusters, of sizes 63, 33, 54, 76, 38, and 43.}\label{fig:Ph500_.3}
\end{figure}

\end{enumerate}

\subsection{A biological neural network} \label{bionet} 
Next we consider a data set for a neural network associated with the 
Caenorhabditis elegans worm; the data set was downloaded from \url{https://toreopsahl.com/datasets/}. The network consists of a weighted directed graph, where vertices represent neurons and the weight of an arc is the number of synapses and/or  gap junctions from one neuron to another. The full network has $306$ vertices, and from that we extracted the largest strongly connected component, which has $257$ vertices. From the weighted adjacency matrix of order $257$, we normalised by dividing each row by its sum so as to produce a stochastic matrix $T$. 

Running 
Algorithm~\ref{alg} with a tolerance of $0.1$ produces three clusters, of sizes $68, 118,$ and $71$. The Perron values of the principal submatrices of $T$ corresponding to the clusters are $0.8660, 0.9808,$ and $ 0.9836,$ respectively.  
The coupling matrix arising from the left-iterative weight vector $u$ computed by Algorithm~\ref{alg} is given by 
$$W_u(T) = 
\left[\begin{array}{ccc}
	0.7551  &  0.1148  &  0.1301\\
	0.0004   & 0.9886 &   0.0111\\
	0.0013    &0.0217&    0.9770
\end{array} \right], 
$$
while 
$$W_{\1}(T)=
\left[\begin{array}{ccc}
	0.5642  &  0.1910   & 0.2448\\
	0.0355   & 0.8033  &  0.1613\\
	0.0732    &0.3079 &   0.6189\\
\end{array} \right].$$
Evidently the diagonal entries of $W_u(T)$ better reflect the list of Perron values than the diagonal entries of $W_{\1}(T)$. Figure \ref{fig:neural} illustrates.

\begin{figure}
	\includegraphics[scale=.33]{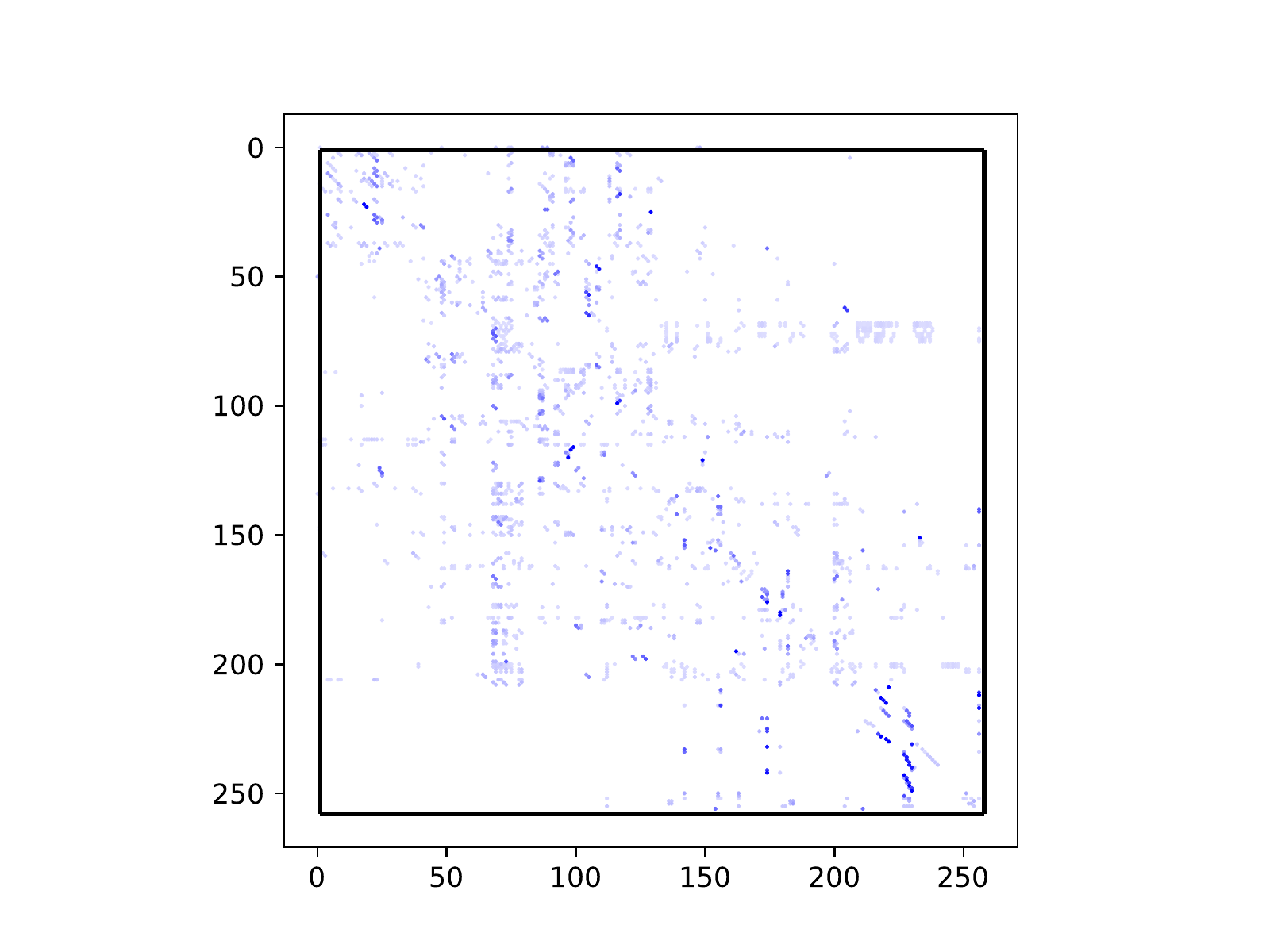}
	\includegraphics[scale=.33]{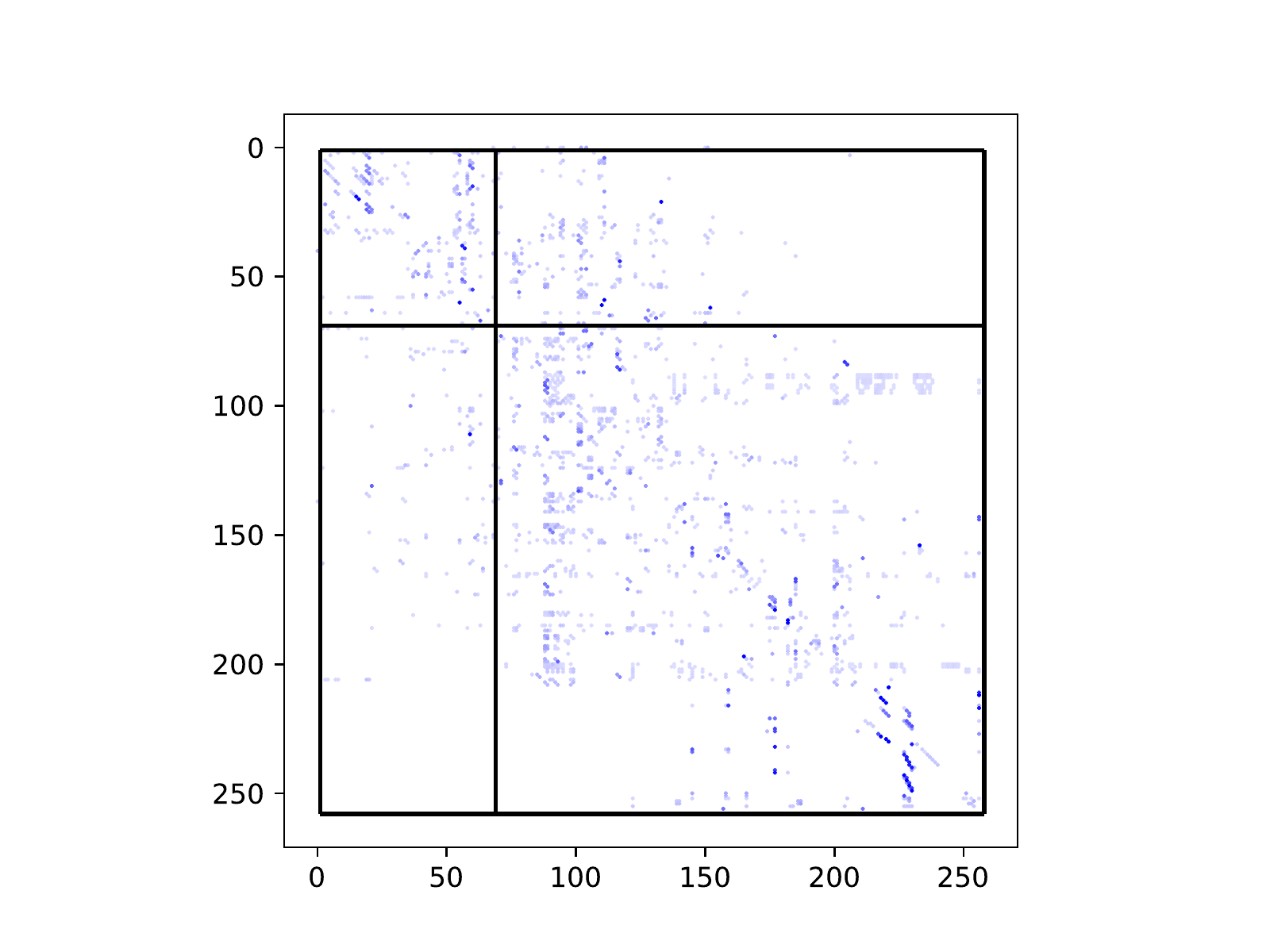}
	\includegraphics[scale=.33]{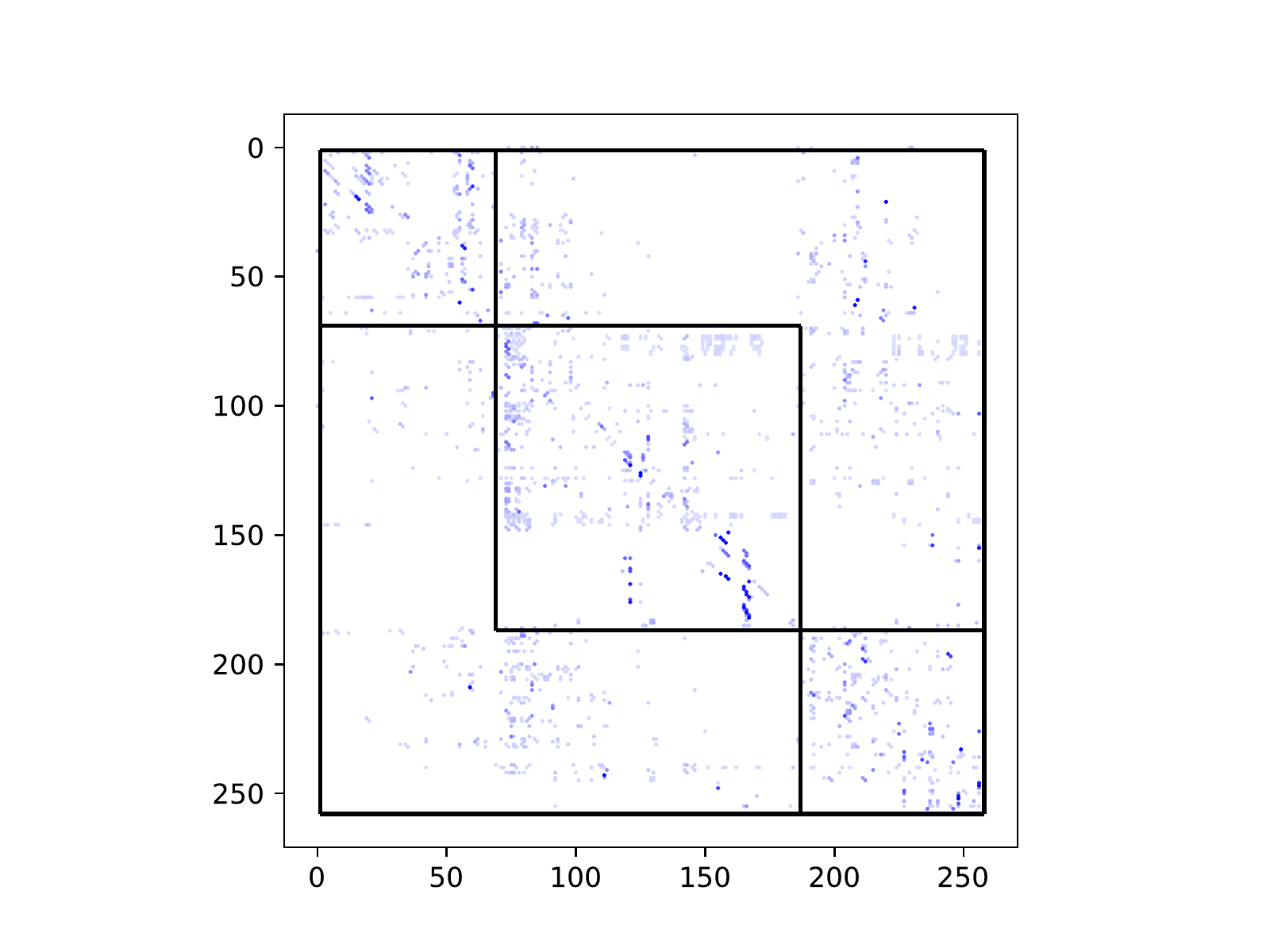}

	\caption{The results of Algorithm~\ref{alg} applied to the stochastic matrix arising from the biological neural net in subsection \ref{bionet}}\label{fig:neural}
\end{figure}

\subsection{Randomly generated Markov chains}\label{sec:random}

In this section we test Algorithm~\ref{alg} with tolerance $\tau = .5, .6$ on stochastic matrices generated randomly from several distributions with embedded cluster structure.  In all of our examples, the index set consists of 4 of sizes 100, 100, 50, and 25, and the $(i, j)$ entry has higher expectation when $i$ and $j$ are in the same cluster than when $i$ and $j$ come from different clusters.  We use the following distributions:
\begin{enumerate}
\item	\textbf{Normalised clustered uniform ensemble (NCUE).}  We generate the entries of a random matrix $X = [x_{ij}]$ independently at random, with $x_{ij} \sim \unif([0, 2p])$ if $i$ and $j$ are in the same cluster and $x_{ij} \sim \unif([0, 2q])$ if $i$ and $j$ are in different clusters, where $p > q$.  We then apply a random permutation to the rows and columns of $X$, and then normalise $X$ so that each row sums to 1.  
\item	\textbf{Normalised clustered Bernoulli ensemble (NCBE).}  We generate the entries of $X = [x_{ij}]$ independently at random, with $x_{ij} \sim \ber(p)$ if $i$ and $j$ are in the same cluster and $x_{ij} \sim \ber(q)$ if $i$ and $j$ are in different clusters, where $p > q$.  We then normalise $X$ so that each row sums to 1.  
\end{enumerate}

We run Algorithm~\ref{alg} on 1000 samples from each of these distributions
and report the following statistics:
\begin{itemize}
\item	\textbf{Average number of clusters identified.}
\item	\textbf{Average average diagonal entry of $W_v$ and $W_\1$} (the coupling matrices with respect to the left-iterative weight vector and $\1$, respectively).
\item	\textbf{Average minimum diagonal entry of $W_v$ and $W_\1$}.
\item	\textbf{Percentage of sample matrices whose clusters were \emph{fully recovered}.}  The clusters are fully recovered if the clusters returned by the algorithm are \emph{exactly} the same as the ground truth clusters used to generate the sample matrices.
\item	\textbf{Average number of errors.}  The number of errors in the set of empirical clusters returned by Algorithm~\ref{alg} is the number of \emph{misclassified} indices.  It is computed by assigning each empirical cluster a corresponding ground truth cluster (adding empty dummy clusters to either set if necessary), then taking $1 / 2$ the sum of the sizes of the symmetric differences of each pair of clusters.  The corresponding ground truth clusters are assigned to the empirical clusters so that this sum is minimized.

More concretely, let $C_1, \ldots, C_k$ be the ground truth clusters and $C_1', \ldots, C_{k'}'$ be the empirical clusters.  If $k \neq k'$, add copies of $\emptyset$ to either the empirical or ground truth clusters so that both collections have $l := \max\{k, k'\}$ clusters.  To each pair $C_i, C_j'$, assign a weight $w(i, j) := |(C_i \setminus C_j') \cup (C_j' \setminus C_i)|$ (the size of the symmetric difference between $C_i$ and $C_j'$).  Then the number of errors is given by 
\begin{equation}\label{eq:err}
\err(C_1, \ldots, C_l; C_1', \ldots, C_l') := \frac12\min_{\pi \in S_l}\sum_{j = 1}^lw(\pi(j), j),
\end{equation}
where $S_l$ denotes the set of permutations on the set $\{1, \ldots, l\}$.  Note that we divide by 2 because otherwise each error is counted twice, and that~\eqref{eq:err} can be computed efficiently by finding a \emph{minimum weight perfect matching} on the complete bipartite graph $K_{l, l}$ with edge weights given by $w(i, j)$; see Figure~\ref{fig:Kll}.
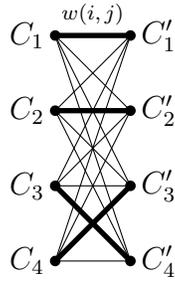
\begin{figure}
\centering
\begin{tikzpicture}
\foreach \y/\py in {1/1, 2/2, 3/4, 4/3} {
	\foreach \x/\an/\lbl in {1/east/C, 2/west/C'}
		\fill (\x, -\y) circle(2pt) node[anchor=\an]{$\lbl_\y$};
	\foreach \z in {1, 2, 3, 4}
		\draw (1, -\y) -- (2, -\z);
	\draw[line width=2pt] (1, -\y) -- (2, -\py);
}
\draw (1.5, -1) node[anchor=south]{\scriptsize$w(i, j)$};
\end{tikzpicture}
\caption{An optimal assignment of empirical clusters to ground truth clusters.  Each empirical cluster $C'_j$ is assigned a unique ground truth cluster $C_{\pi(j)}$ so that $\sum_jw(\pi(j), j)$ is minimized.}\label{fig:Kll}
\end{figure}

\end{itemize}

Our results are summarized in Table~\ref{table:results}, while Figures~\ref{fig:5err}-\ref{fig:37err} illustrate the performance of Algorithm~\ref{alg} on several examples with varying degrees of error.  One can observe the following about our results:

\newcommand{\colwidth}{.5in}
\newcommand{\headsize}{\scriptsize}

\begin{table}
\begin{center}
\scriptsize
\begin{tabular}{|p{.55in}|p{\colwidth}|p{\colwidth}|p{\colwidth}|p{\colwidth}|p{\colwidth}|p{\colwidth}|p{\colwidth}|}
\hline
	& \headsize{Avg.\ \# clusters}	& \headsize{Avg.\ avg.\ diag.\ entry $W_v$}	& \headsize{Avg.\ avg.\ diag.\ entry $W_\1$}		& \headsize{Avg.\ min.\ diag.\ entry $W_v$}	& \headsize{Avg.\ min.\ diag.\ entry $W_\1$}		& \headsize{\% fully recovered}	& \headsize{Avg.\ \# errors}	\\ 
\hline
\headsize{NCUE, $p = .95$, $q = .05$, $\tau = .5$}	& 2.9700	& 0.8583	& 0.8265	& 0.7257	& 0.6871	& 1.6\%	& 19.9960	\\	
\hline
\headsize{NCUE, $p = .95$, $q = .0095$, $\tau = .5$}	& 3.9790	& 0.9137	& 0.9064	& 0.7498	& 0.7372	& 48.4\%	& 3.0400	\\	
\hline
\headsize{NCUE, $p = .95$, $q = .05$, $\tau = .6$}	& 3.9550	& 0.7670	& 0.7489	& 0.5242	& 0.5046	& 6.6\%	& 13.9380	\\ 	
\hline
\headsize{NCUE, $p = .95$, $q = .0095$, $\tau = .6$}	& 4.0830	& 0.9281	& 0.9225	& 0.8218	& 0.8162	& 53.7\%	& 3.1850	\\	
\hline
\headsize{NCBE, $p = .95$, $q = .05$, $\tau = .5$}	& 3.5150	& 0.8071	& 0.7718	& 0.6203	& 0.5723	& 1.6\%	& 23.5570	\\	
\hline
\headsize{NCBE, $p = .95$, $q = .0095$, $\tau = .5$}	& 4.0190	& 0.9229	& 0.9033	& 0.8179	& 0.7722	& 42.4\%	& 7.1250	\\	
\hline
\headsize{NCBE, $p = .95$, $q = .05$, $\tau = .6$}	& 3.9620	& 0.7566	& 0.7287	& 0.5230	& 0.4747	& 1.5\%	& 19.0120	\\	
\hline
\headsize{NCBE, $p = .95$, $q = .0095$, $\tau = .6$}	& 4.0920	& 0.9150	& 0.8973	& 0.8013	& 0.7594	& 38.2\%	& 7.3700	\\	
\hline
\end{tabular}
\end{center}
\caption{Results of running Algorithm~\ref{alg} on randomly generated stochastic matrices.}\label{table:results}
\end{table}

\begin{figure}
\centering
	\includegraphics[scale=.3]{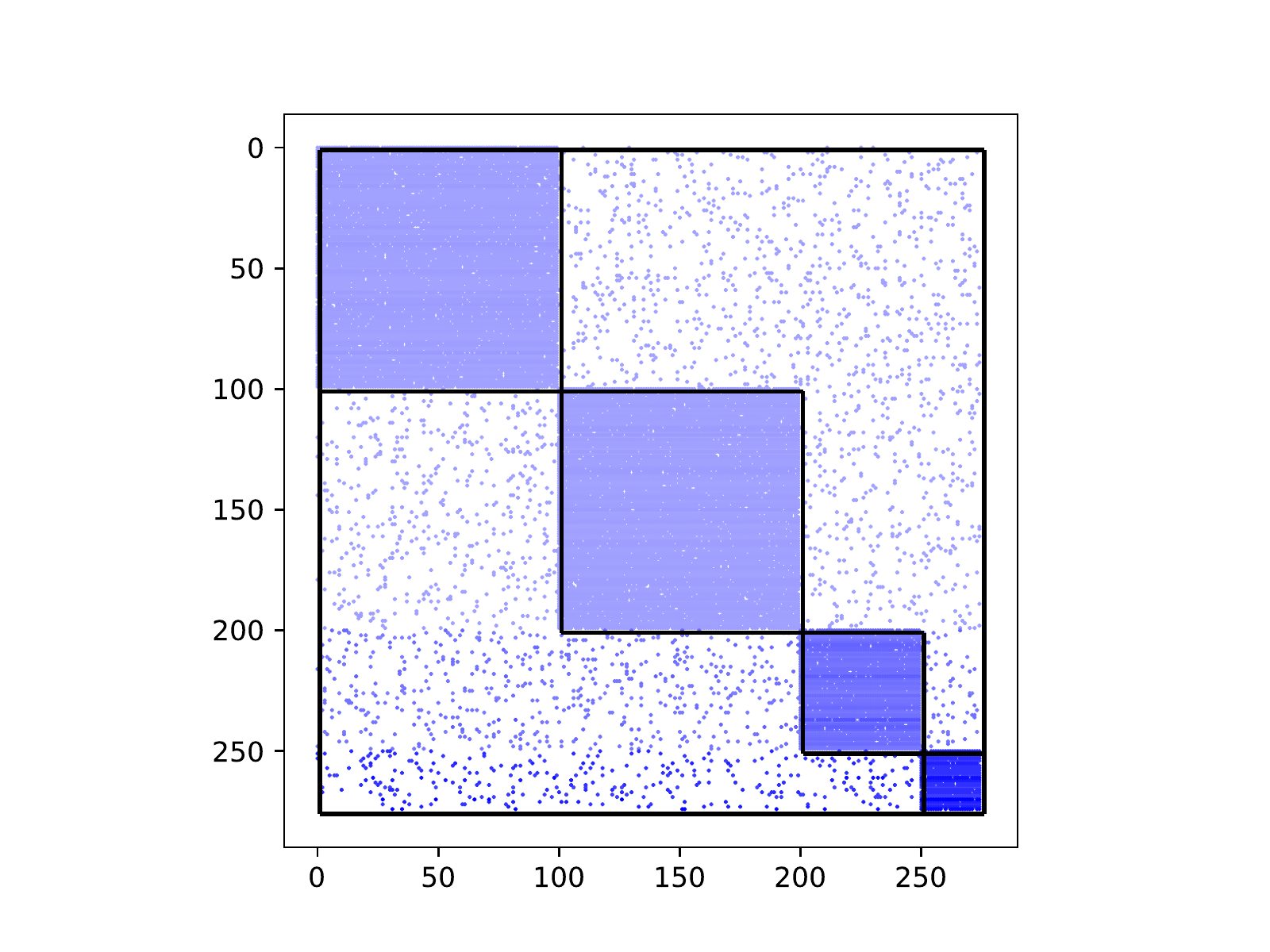}
	\includegraphics[scale=.3]{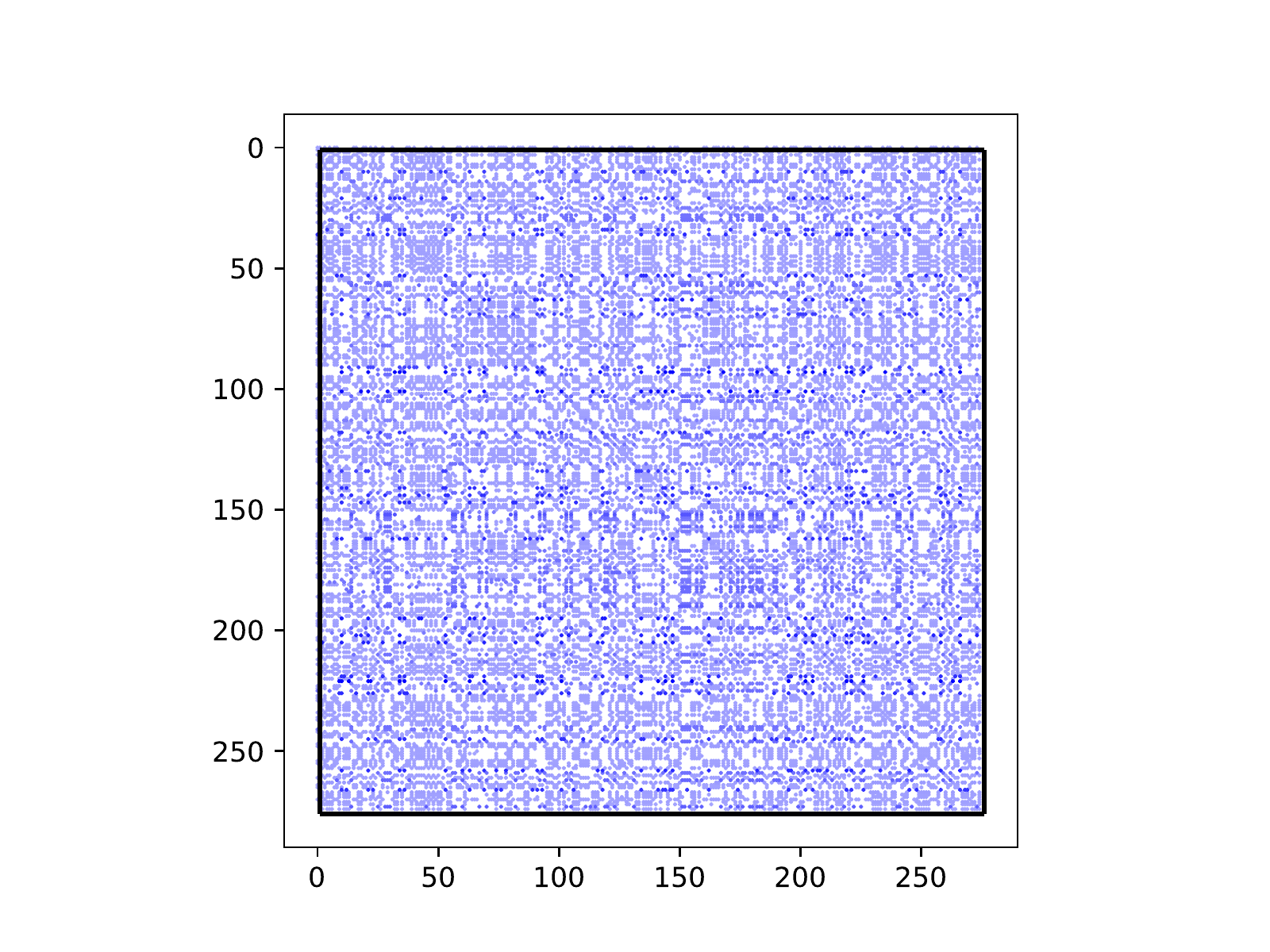}
	\includegraphics[scale=.3]{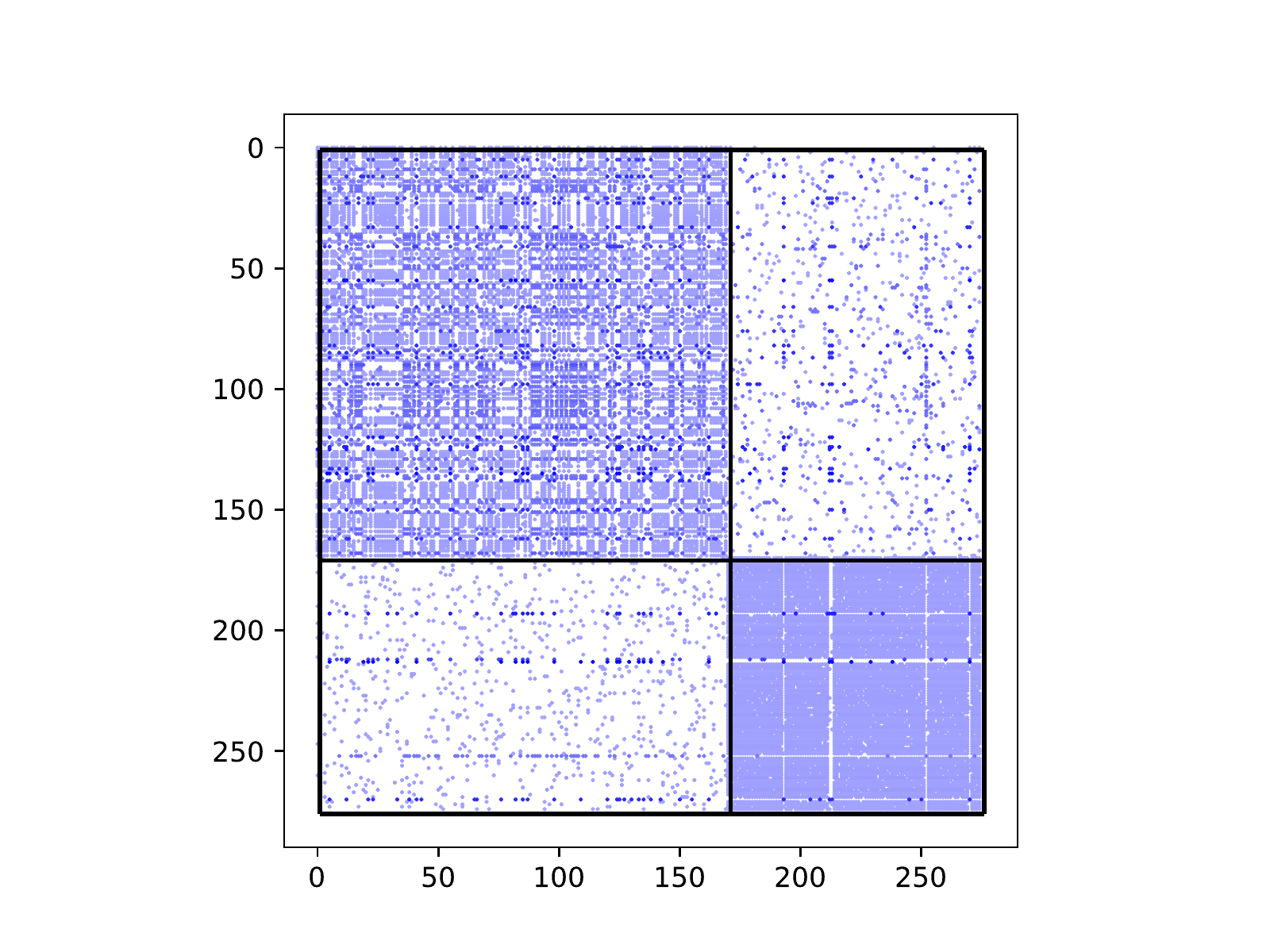}
	\includegraphics[scale=.3]{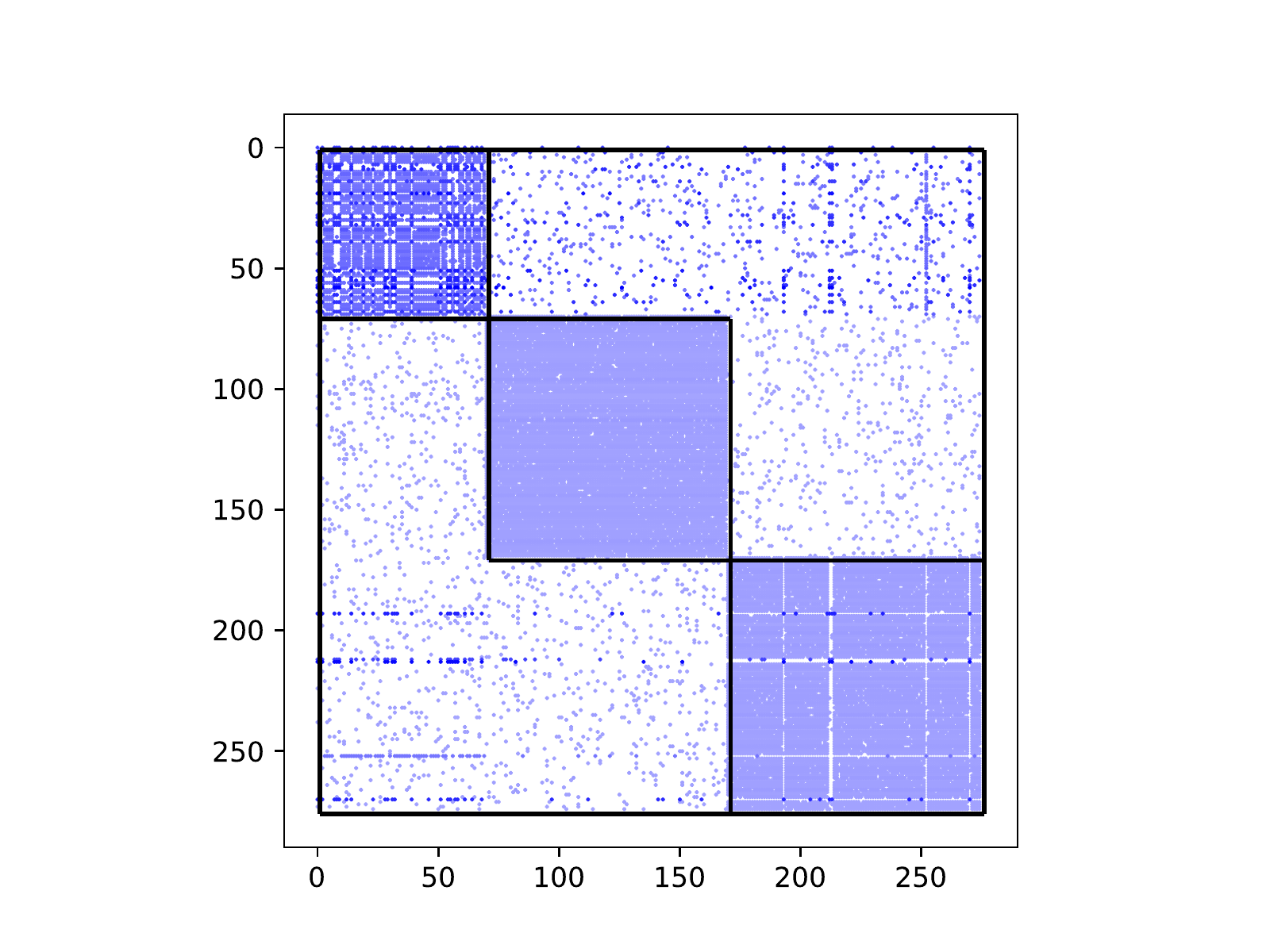}
	\includegraphics[scale=.3]{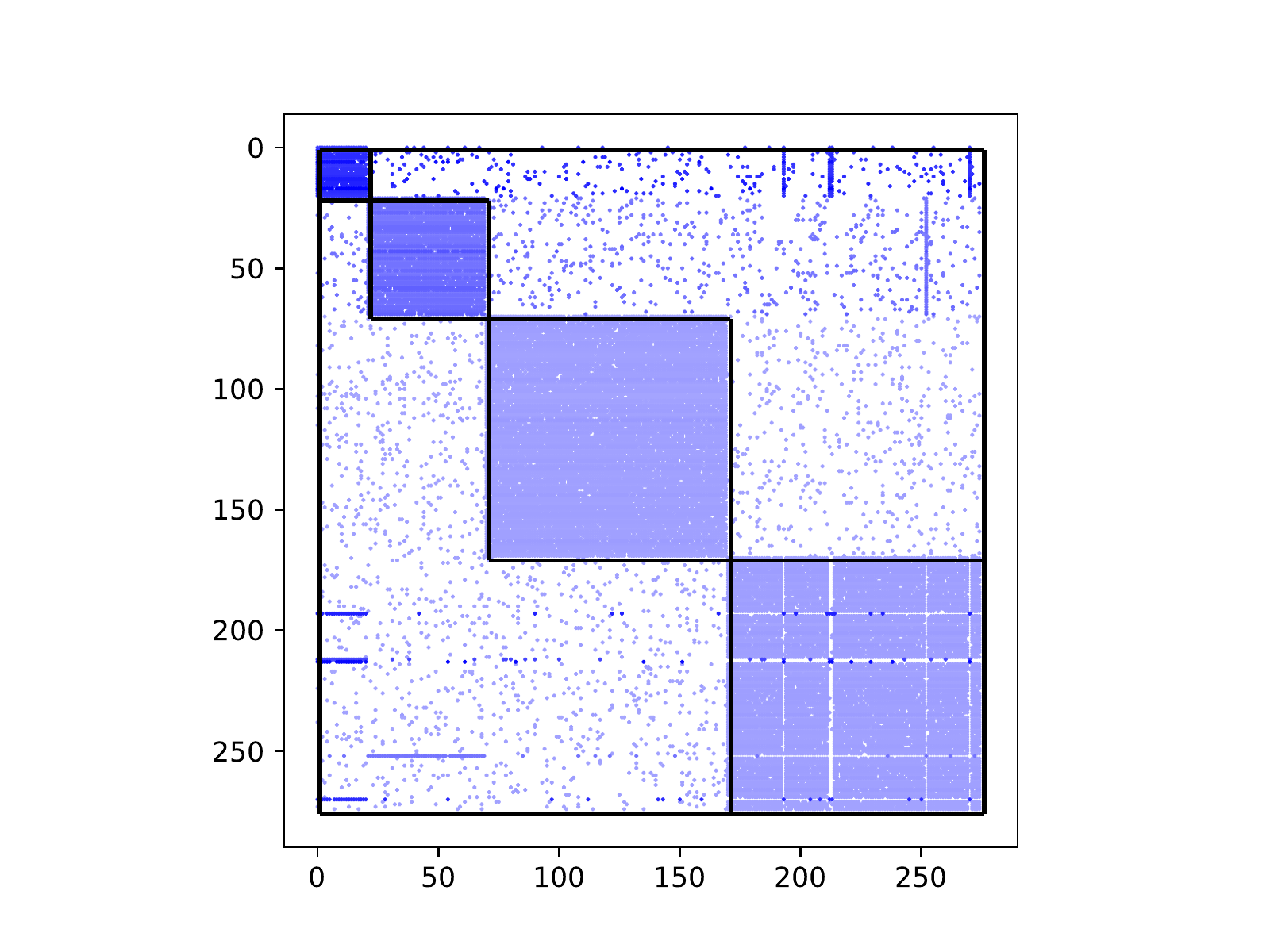}
	\caption{The results of running Algorithm~\ref{alg} on a sample from a NCBE with $p = .95, q = .05, \tau = .6$ and 5 errors.  The first image shows the original matrix, the second image shows the matrix after a random permutation of the rows and columns, and the remaining images show the iterations of Algorithm~\ref{alg}.  The algorithm produces 4 clusters of sizes 21, 49, 100, and 105, with 5 misclassified indices.}\label{fig:5err}
\end{figure}

\begin{figure}
\centering
	\includegraphics[scale=.3]{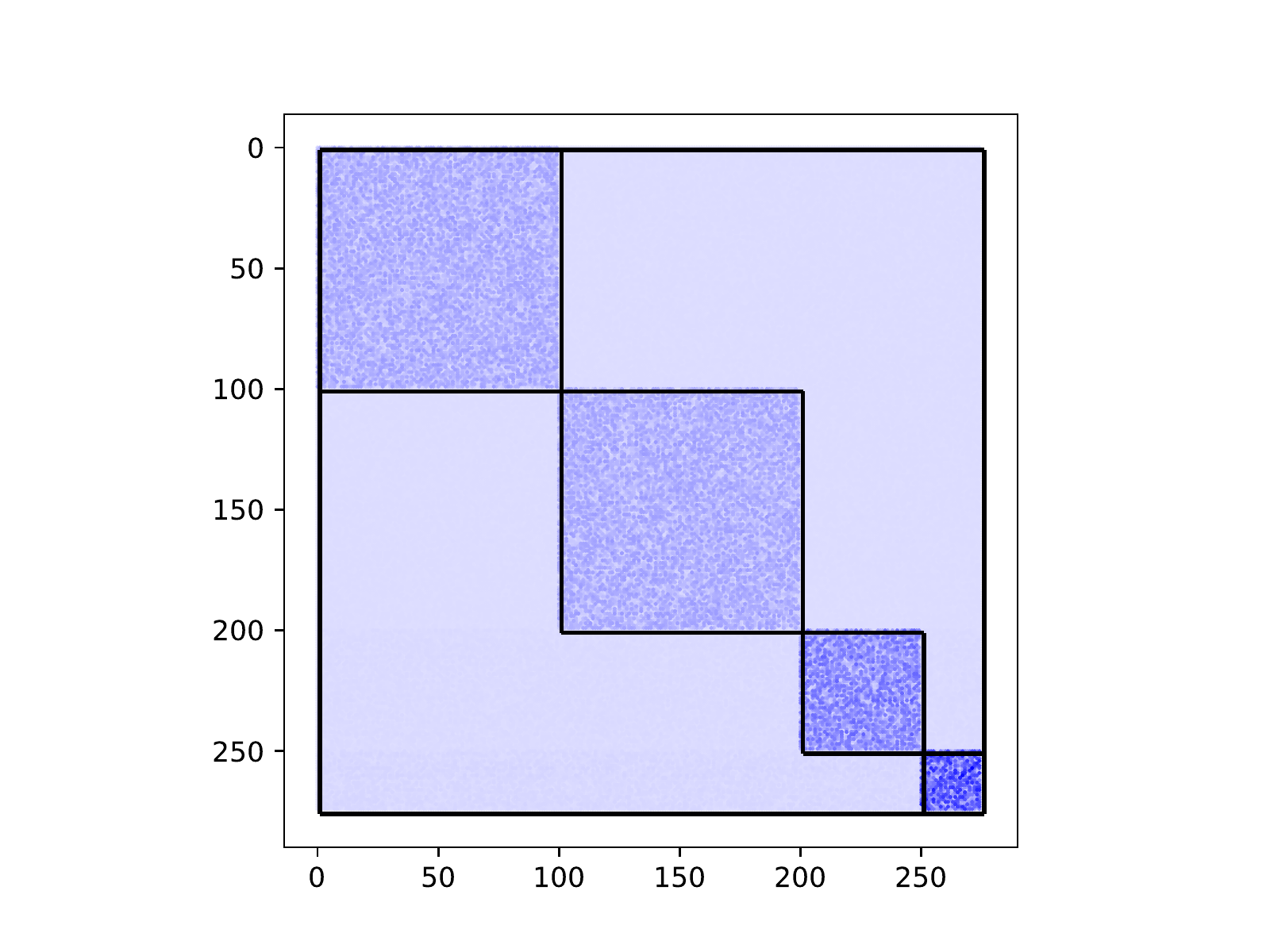}
	\includegraphics[scale=.3]{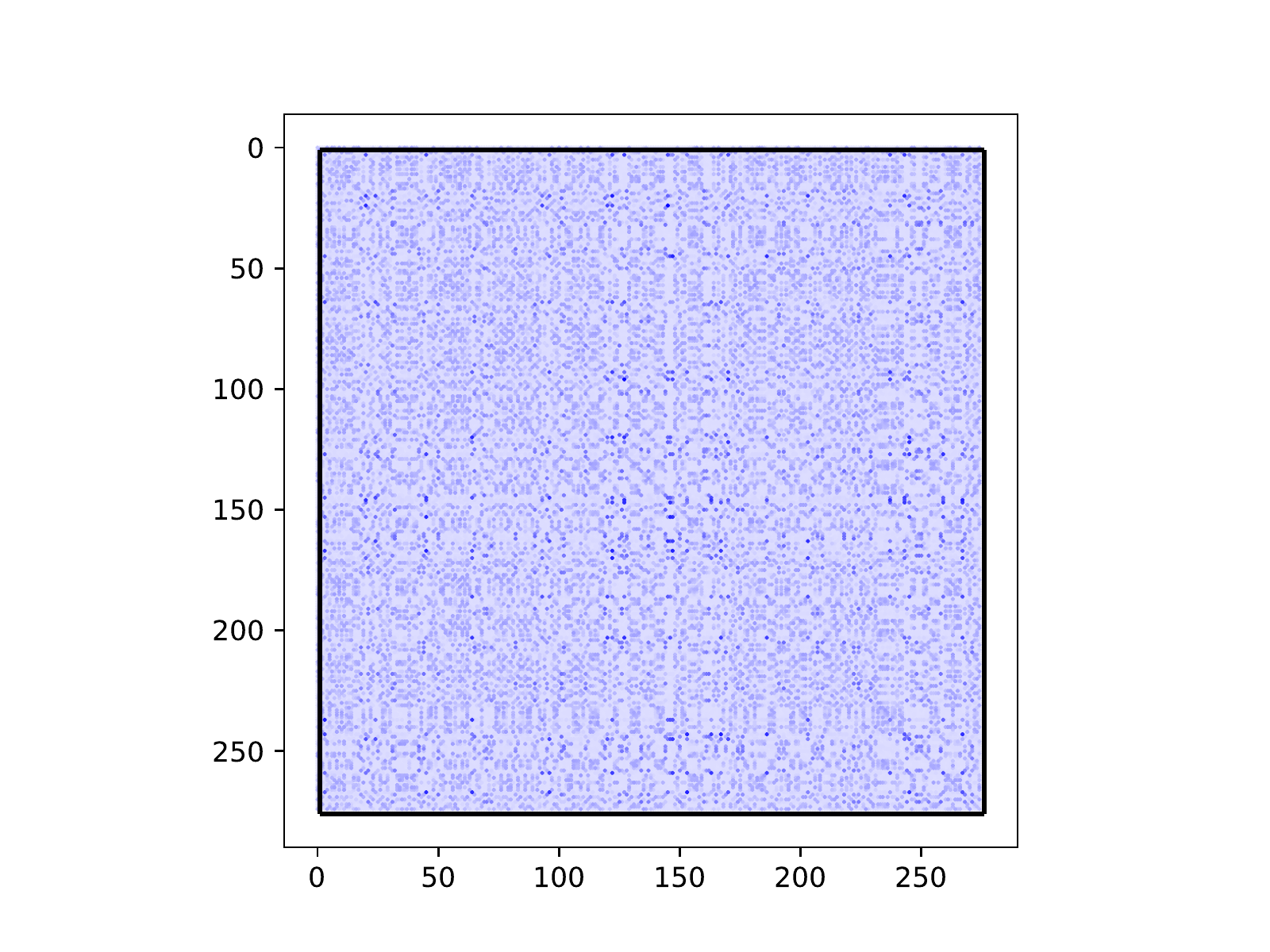}
	\includegraphics[scale=.3]{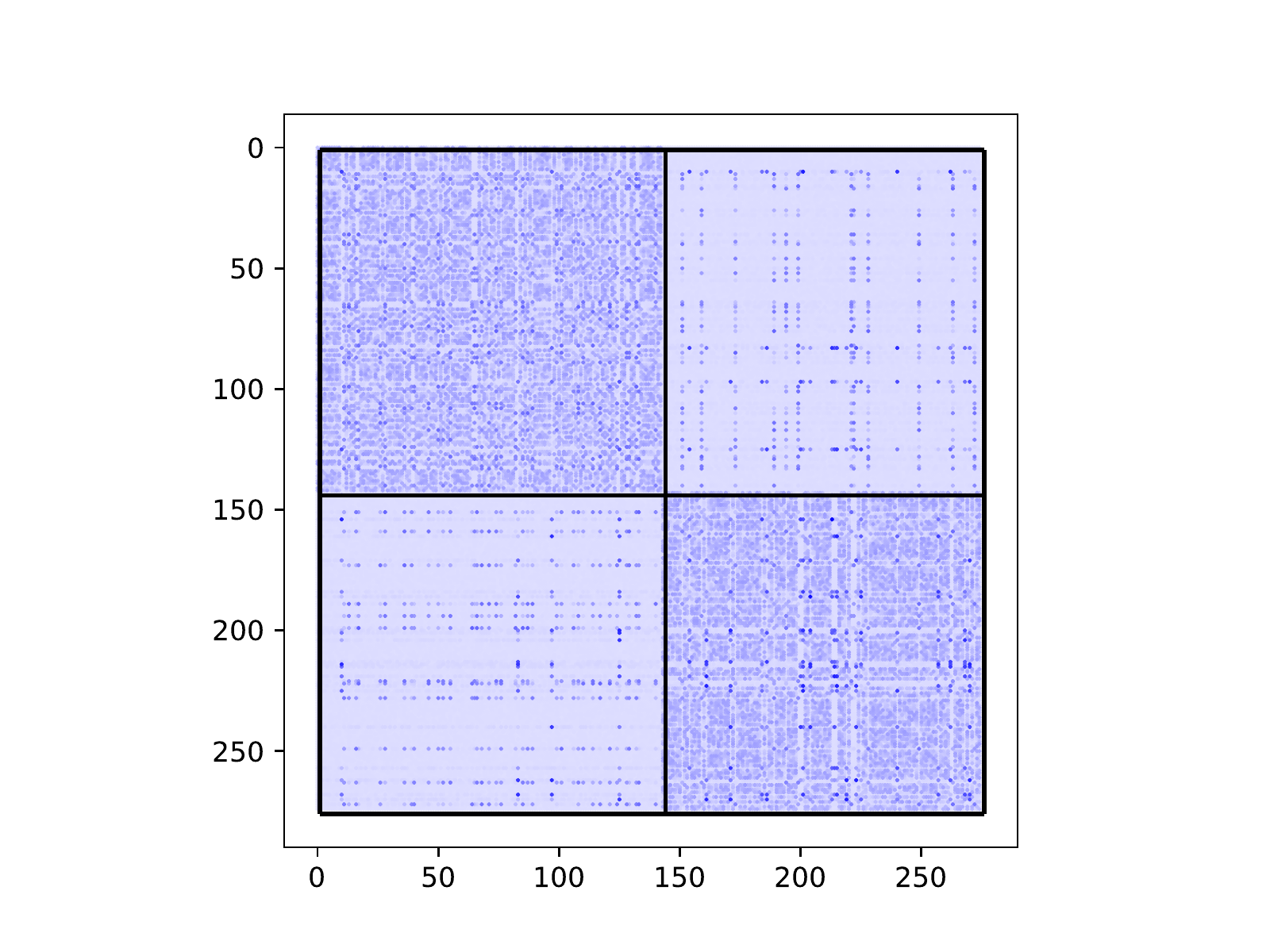}
	\includegraphics[scale=.3]{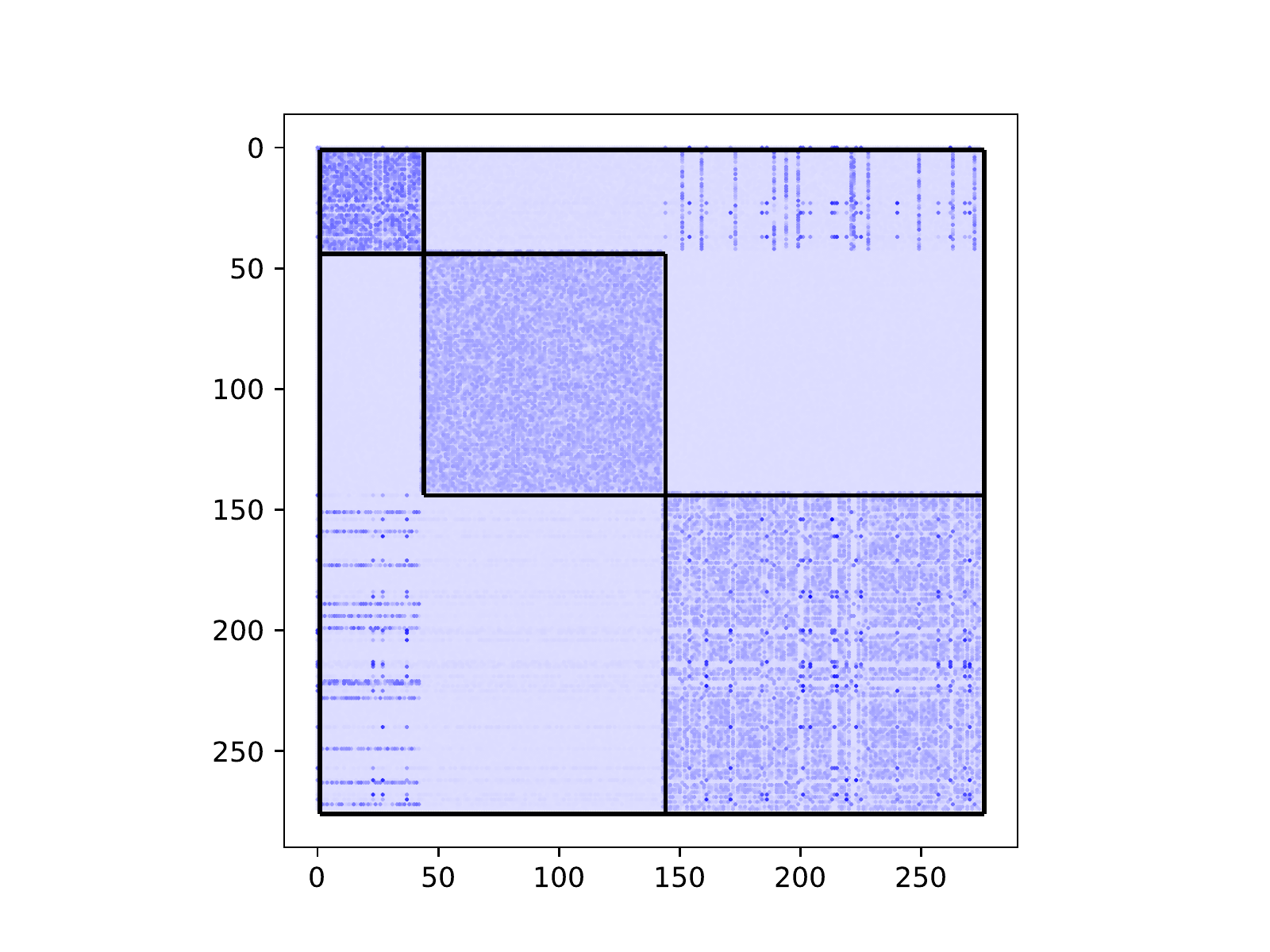}
	\includegraphics[scale=.3]{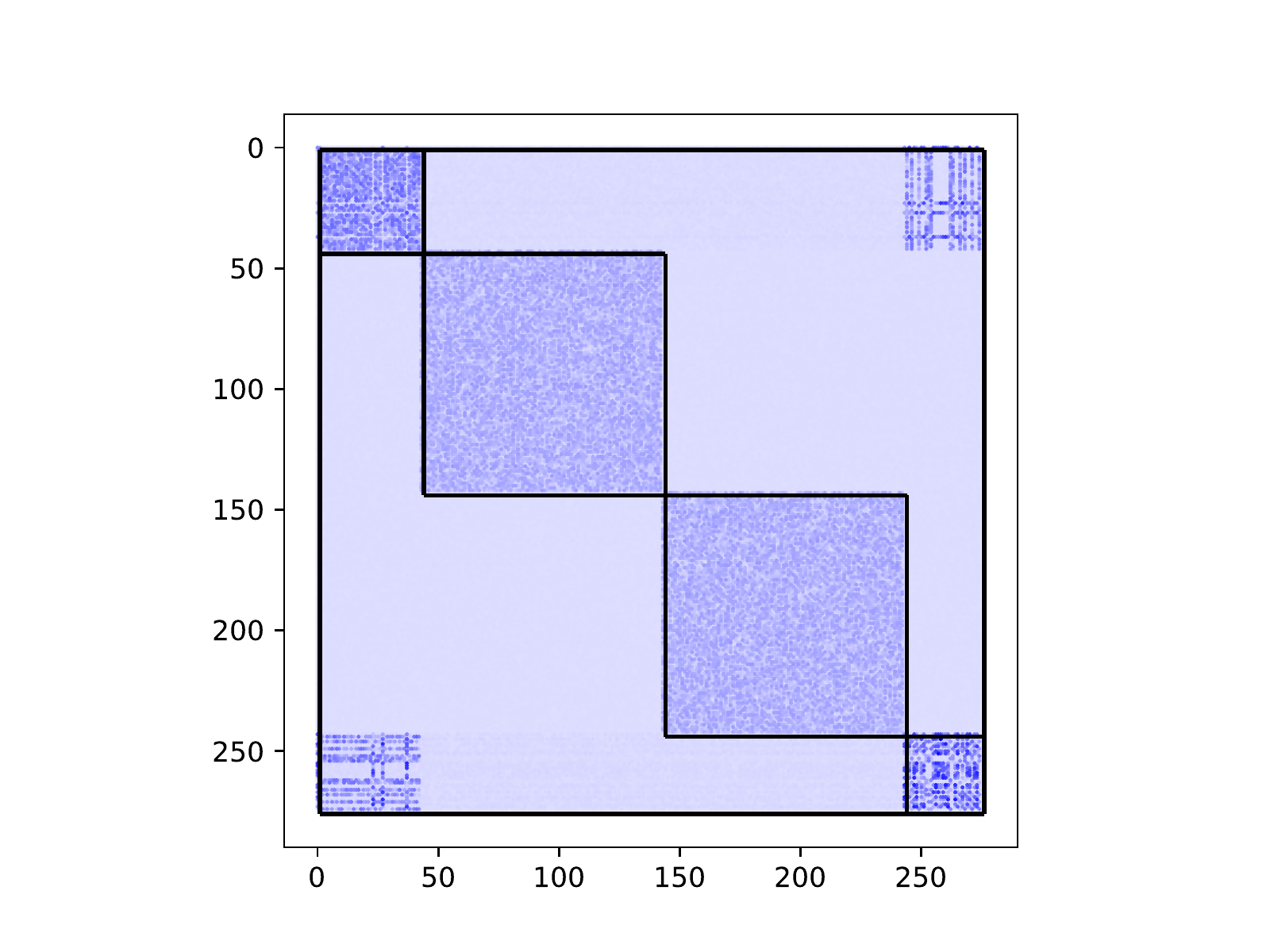}
	\caption{The results of running Algorithm~\ref{alg} on a sample from a NCUE with $p = .95, q = .05, \tau = .6$ and 17 errors.  The first image shows the original matrix, the second image shows the matrix after a random permutation of the rows and columns, and the remaining images show the iterations of Algorithm~\ref{alg}.  The algorithm produces 4 clusters of sizes 43, 100, 100, and 32, with 17 misclassified indices.}\label{fig:17err}
\end{figure}

\begin{figure}
\centering
	\includegraphics[scale=.3]{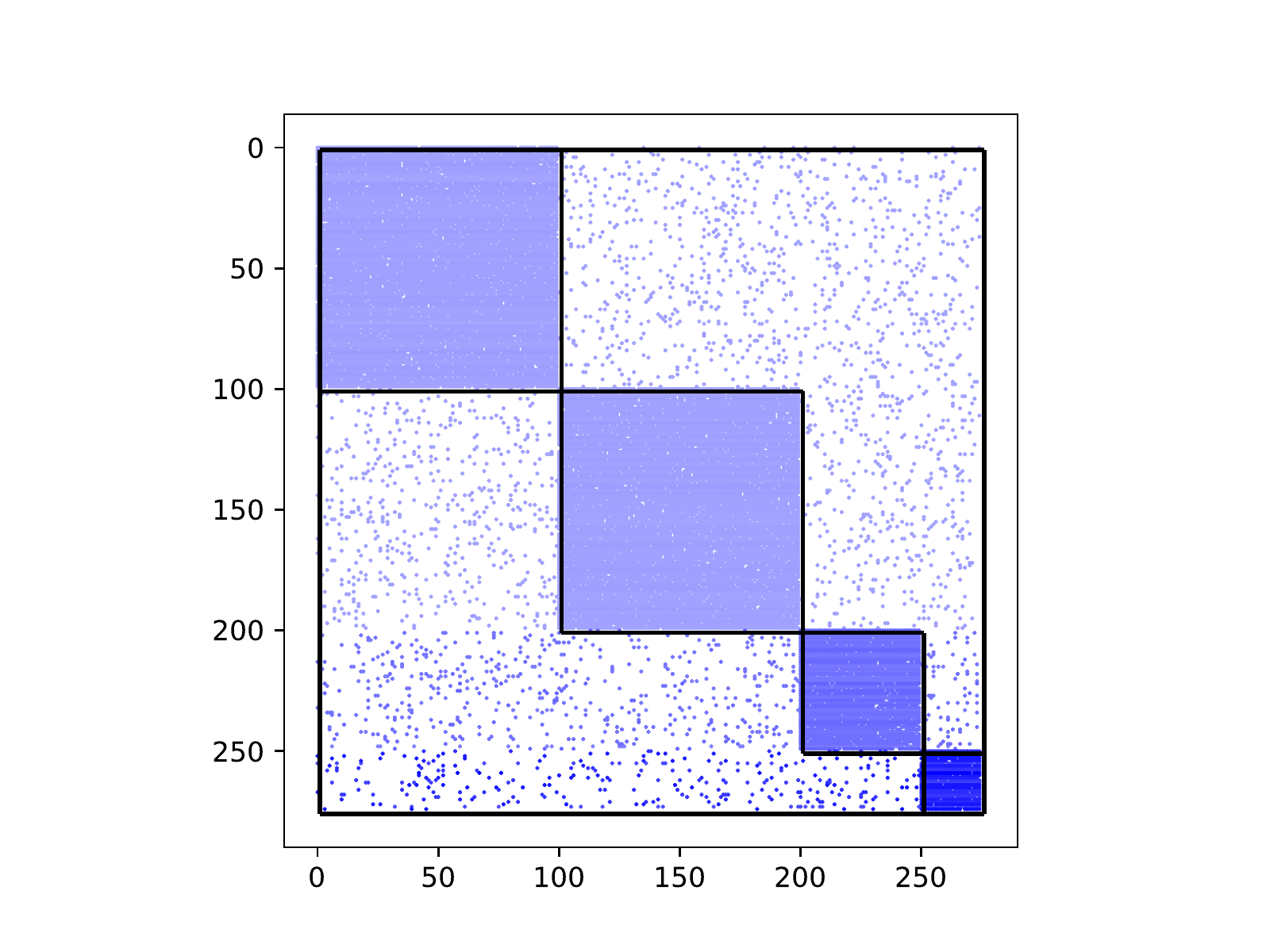}
	\includegraphics[scale=.3]{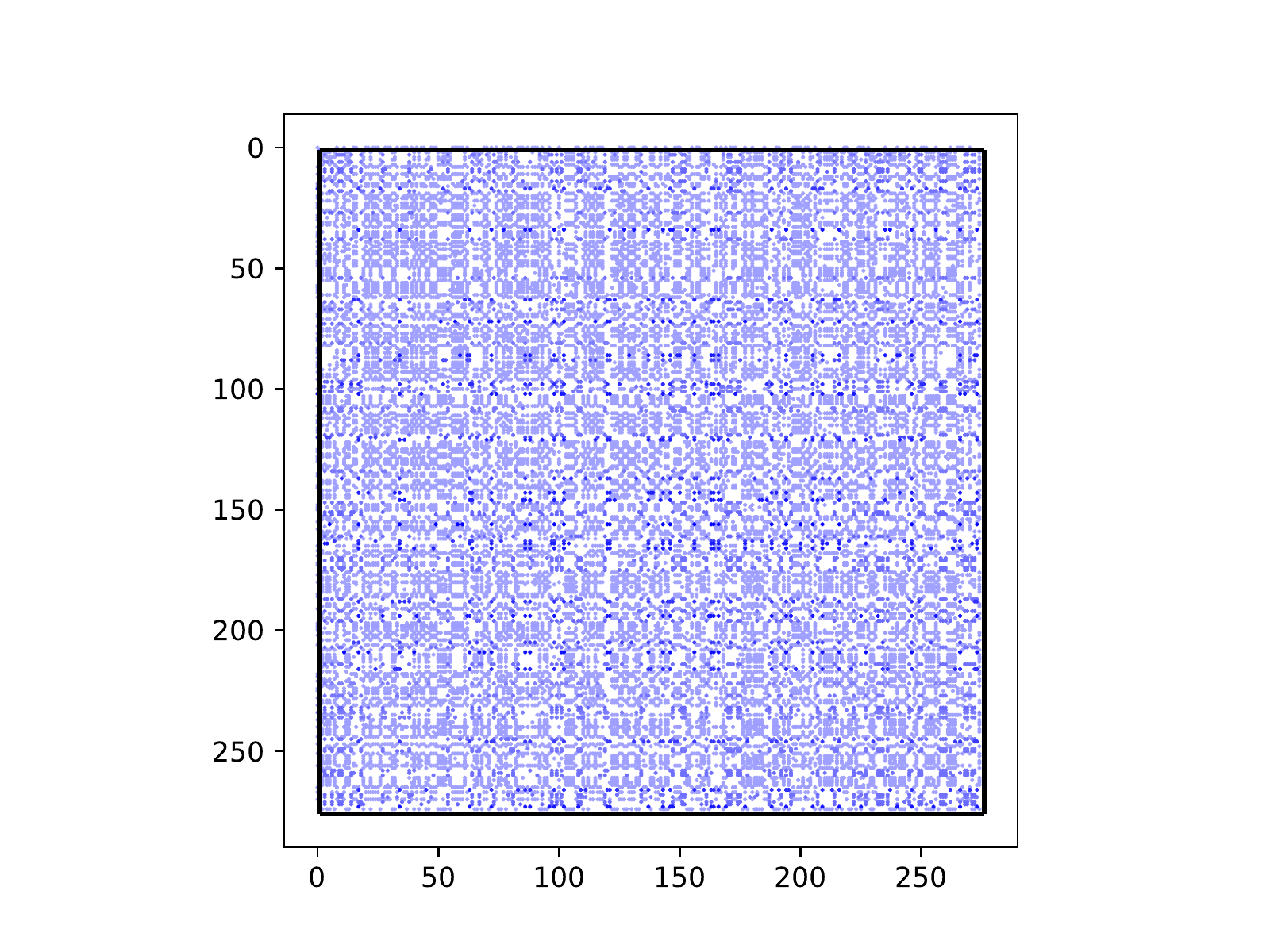}
	\includegraphics[scale=.3]{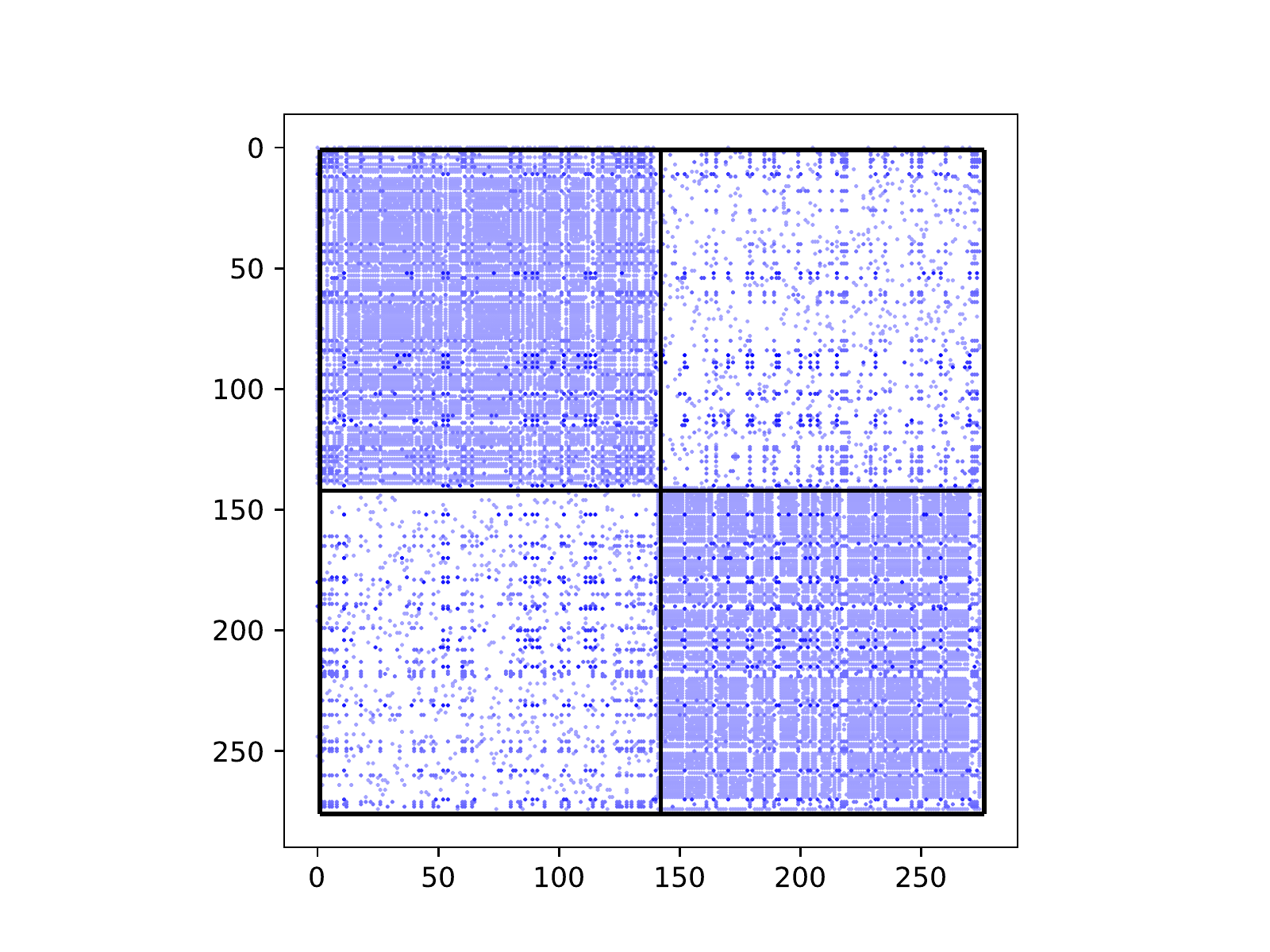}
	\includegraphics[scale=.3]{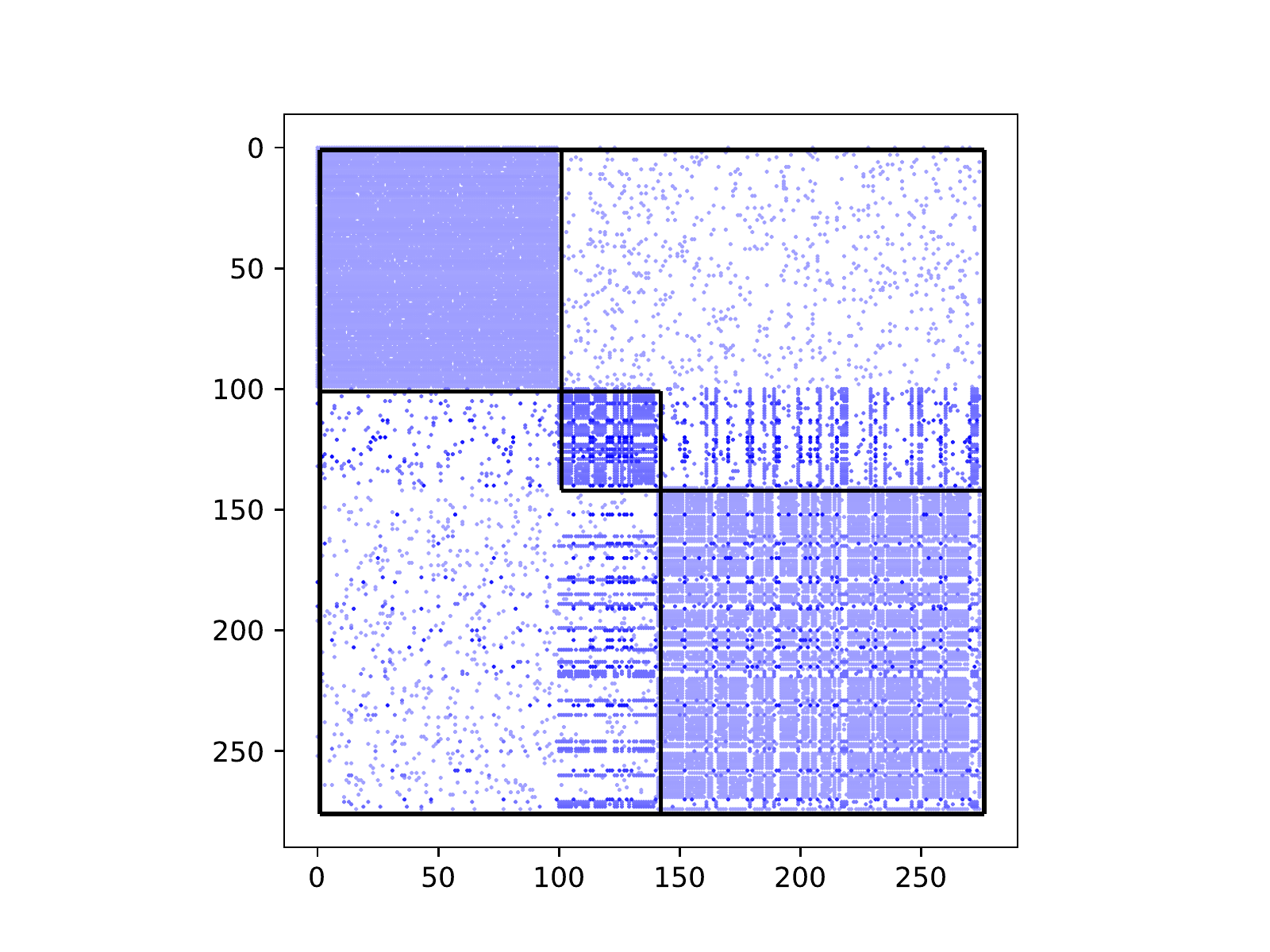}
	\includegraphics[scale=.3]{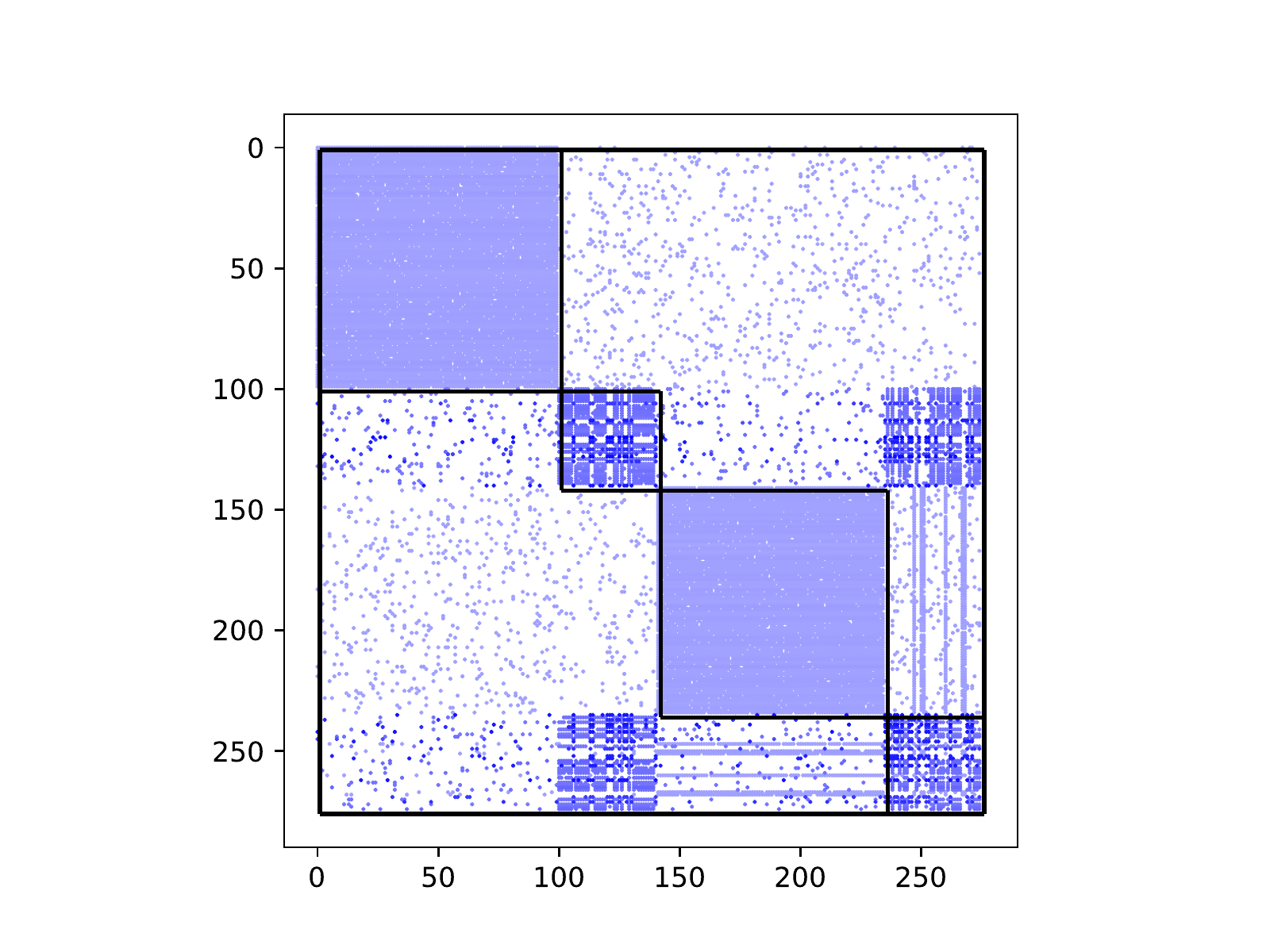}
	\caption{The results of running Algorithm~\ref{alg} on a sample from a NCBE with $p = .95, q = .05, \tau = .6$ and 37 errors.  The first image shows the original matrix, the second image shows the matrix after a random permutation of the rows and columns, and the remaining images show the iterations of Algorithm~\ref{alg}.  The algorithm produces 4 clusters of sizes 41, 100, 94, 40, with 37 misclassified indices.}\label{fig:37err}
\end{figure}

\begin{itemize}
\item	Unsurprisingly, when the inter-cluster probabilities are lower ($q = .0095$), the algorithm performs significantly better than when the inter-cluster probabilities are higher ($q = .05$).  Typically, when $q = .05$ the average number of clusters found is closer to 4, the diagonal entries of the coupling matrices and percentage fully recovered are higher, and the average number of errors is lower than when $q = .05$.  In particular, the percentage of fully recovered is much higher in these cases (38.2-53.7\% compared with 1.5-6.6\% when $q = .05$).

\item	The percentages of fully recovered may seem disappointing, especially when the inter-cluster probabilities are higher ($q = .05$), but in all cases the average number of errors is quite low.  In the worst case we have 23.557 errors on average, which means only 8.56\% of the states are misclassified.  Most of these are probably as a result of the algorithm identifying the wrong number of clusters: in this case, empty dummy clusters must be added to either the empirical or ground truth clusters when calculating the number of errors; the number of errors will necessarily count at least one entire cluster (either empirical or ground truth).

\item	There is a high degree of correlation between the first and last two columns.  

The Bernoulli case with $q = .05, \tau = .6$ is an interesting exception.

\item	In the regime with higher inter-cluster probabilities ($q = .05$), the algorithm seems to do better with the uniform ensemble than with the Bernoulli ensemble.  In the regime with lower inter-cluster probabilities ($q = .0095$), the results are comparable with the two distributions.


\item	There is a tradeoff between inter-cluster probabilities (determined by $q$) and the tolerance $\tau$.  When $q$ is larger, the algorithm identifies fewer clusters on average; hence, increasing the tolerance from .5 to .6 brings us closer to the correct number of clusters.  Hence, in both the uniform and Bernoulli cases with larger inter-cluster probabilities, a higher tolerance yields a higher percentage of fully recovered and lower average number of errors.

\item	The placement of the misclassified states differs in Figures~\ref{fig:5err}-\ref{fig:37err}.  In Figure~\ref{fig:5err}, (empirical) cluster 4 contains four states that should have been placed in (empirical) cluster 1 and one state that should have been placed in cluster 2; in Figure~\ref{fig:17err}, all 17 errors come from states in cluster 4 which should have been placed in cluster 1; and in Figure~\ref{fig:37err}, cluster 4 contains some states which should have been placed in cluster 2, and vice versa.
\end{itemize}

\section{Open problems}\label{sec:openproblems}

We have mentioned several open problems and areas for improvement throughout this paper.  We list them here:

\begin{itemize}
\item	\textbf{Left-iterative weight vector vs.\ ones vector.}  As noted in Remark~\ref{rmk:1vsLIWV}, there is a disconnect between our guarantees for the coupling matrix with respect to $\1$ and the left-iterative weight vector.  In Subsection~\ref{subsec:smallsing=>clustering} we show if the Laplacian of a Markov chain has a small positive singular value, then it exhibits clustering with respect to the left-iterative weight vector.  However, in Subsection~\ref{subsec:couplingmatrix}, we show a converse result \emph{with respect to $\1$}.  Thus, it remains an open problem to prove the converse implication of one or both of Theorems~\ref{thm:small_sing} and~\ref{thm:directsum_stochastic}.  Moreover, as noted in Remark~\ref{rmk:n^-3/2}, Theorems~\ref{small_sing} and~\ref{thm:directsum_stochastic} require the diagonal entries of $W_\1$ to be at least $1 - O(n^{3/2})$ in order to guarantee a meaningful bound on the second smallest singular value of the Laplacian; this seems like a rather strong requirement, and can perhaps be improved.

\item	\textbf{Inessential states.}  As mentioned in Remark~\ref{rmk:inessential}, our analysis does not handle the case when the Markov chain corresponding to the \emph{unperturbed} matrix contains one or more inessential states.  In such cases, the transition matrix is no longer a perturbation of a direct sum of stochastic matrices, but of a block-lower triangular stochastic matrix with irreducible diagonal blocks (after permuting the rows), with all inessential states in the lowest row of blocks.  Such Markov chains could still be considered ``clustered'' if the diagonal blocks have large transition probabilities compared to the off-diagonal blocks, with the exception of the last row of blocks (which are inessential).

\item	\textbf{Choosing a good tolerance.}  In practice, one way of choosing a tolerance for Algorithm~\ref{alg} is to simply look for a ``gap'' in the singular values of $I - T$.  Some care should be taken to quantify exactly how large a gap we should look for, and whether the tolerance should be updated in recursive calls to the algorithm.

Choosing the ``right'' tolerance is important because there is a tradeoff between the tolerance and the number of clusters identified: choosing too small a tolerance can result in no clusters being detected, while choosing too large a tolerance can result in ``overclustering,'' i.e.\ identifying a large number of small clusters, leading to a coupling matrix with small diagonal entries.

Rather than choosing a tolerance for the small singular values, we could use the diagonal entries of the coupling matrix as a stopping criterion for Algorithm~\ref{alg}, as is done in~\cite{fritzsche2008svd}.  In this approach, one fixes a lower bound $1 - \delta$ for the diagonal entries of the desired coupling matrix, and runs the algorithm for as many iterations as possible without violating this lower bound.


\item	\textbf{Optimizing for two-mode networks.}  While our algorithm appears to perform quite well on Markov chains generated from two-mode networks (see Section~\ref{sec:twomode}), our algorithm and results do not take into account any special combinatorial or spectral properties of such Markov chains.  Perhaps doing so could result in improved guarantees for our algorithm in that case.

\item	\textbf{Provable guarantees for randomly generated Markov chains.}  In Section~\ref{sec:random} we make several observations about the results of running our algorithm on randomly generated input.  It would be nice if we could prove that if a Markov chain is generated randomly according to a certain distribution, then the clusters produced by Algorithm~\ref{alg} have certain guarantees, e.g.\ few errors or diagonally dominant coupling matrix, with high probability.
\end{itemize}


\section*{Acknowledgements} S.C. was supported as a PIMS Postdoctoral Fellow when much of the research in this paper was conducted. S.K.'s research is supported in part by NSERC Discovery Grant RGPIN-2019-05408 and the University of Manitoba's University Research Grant Program.  We thank Prof.\ Shmuel Friedland and the University of Illinois at Chicago Department of Mathematics, Statistics, and Computer Science for allowing us to use their computing resources.

\bibliographystyle{plain}
\bibliography{clustered_markov}

\begin{thebibliography}{10}

\bibitem{Azadetal}
Ariful Azad, Georgios Pavlopoulos, Christos Ouzounis, Nikos Kyrpides, and Aydin
  Bulu\c{c}.
\newblock Hip{MCL}: a high-performance parallel implementation of the {M}arkov
  clustering algorithm for large-scale networks.
\newblock {\em Nucleic Acids Research}, 46:e33, 2018.

\bibitem{BCFKS}
Jane Breen, Emanuele Crisostomi, Mahsa Faizrahnemoon, Steve Kirkland, and
  Robert Shorten.
\newblock Clustering behaviour in {M}arkov chains with eigenvalues close to
  one.
\newblock {\em Linear Algebra Appl.}, 555:163--185, 2018.

\bibitem{brin1998anatomy}
Sergey Brin and Lawrence Page.
\newblock The anatomy of a large-scale hypertextual web search engine.
\newblock {\em Computer Networks and ISDN Systems}, 30(1):107--117, 1998.
\newblock Proceedings of the Seventh International World Wide Web Conference.

\bibitem{courtois}
P.J. Courtois.
\newblock {\em Decomposability: Queuing and Computer System Applications}.
\newblock Academic Press, 1977.

\bibitem{CKS}
Emanuele Crisostomi, Steve Kirkland, and Robert Shorten.
\newblock A {G}oogle-like model of road network dynamics and its application to
  regulation and control.
\newblock {\em Int. J. Control}, 84:633--651, 2011.

\bibitem{davis}
Allison Davis, Burleigh Gardner, and Mary Gardner.
\newblock {\em Deep South}.
\newblock University of Chicago Press, Chicago, IL, USA, 1941.

\bibitem{DHFS}
Peter Deuflhard, Wilhelm Huisinga, Alexander Fischer, and Christof Sch\"utte.
\newblock Identification of almost invariant aggregates in reversible nearly
  uncoupled {M}arkov chains.
\newblock {\em Linear Algebra Appl.}, 315:39--59, 2000.

\bibitem{DW}
Peter Deuflhard and Marcus Weber.
\newblock Robust {P}erron cluster analysis in conformation dynamics.
\newblock {\em Linear Algebra Appl.}, 398:161--184, 2005.

\bibitem{fritzsche2008preprint}
David Fritzsche, Volker Mehrmann, Daniel Szyld, and Elena Virnik.
\newblock An {SVD} approach to identifying meta-stable states of {M}arkov
  chains (preprint).
\newblock 2007.
\newblock
  \url{https://www3.math.tu-berlin.de/preprints/files/Preprint-15-2006.pdf}.

\bibitem{fritzsche2008svd}
David Fritzsche, Volker Mehrmann, Daniel Szyld, and Elena Virnik.
\newblock An {SVD} approach to identifying metastable states of {M}arkov
  chains.
\newblock {\em Electronic Transactions on Numerical Analysis}, 29:46--69, 01
  2008.

\bibitem{horn2012matrix}
R.A. Horn and C.R. Johnson.
\newblock {\em Matrix Analysis}.
\newblock Cambridge University Press, 2012.

\bibitem{imrich2000product}
Wilfried Imrich, Sandi Klav{\v{z}}ar, and Richard~H Hammack.
\newblock Product graphs: structure and recognition.
\newblock 2000.

\bibitem{koldobsky2005fourier}
Alexander Koldobsky.
\newblock {\em Fourier Analysis in Convex Geometry}.
\newblock American Mathematical Soc., 2005.

\bibitem{langville2006google}
Amy~N. Langville and Carl~D. Meyer.
\newblock {\em Google's PageRank and Beyond: The Science of Search Engine
  Rankings}.
\newblock Princeton University Press, 2006.

\bibitem{page1999pagerank}
Lawrence Page, Sergey Brin, Rajeev Motwani, and Terry Winograd.
\newblock The pagerank citation ranking: Bringing order to the web.
\newblock Technical Report 1999-66, Stanford InfoLab, November 1999.
\newblock Previous number = SIDL-WP-1999-0120.

\bibitem{Pic}
Carlo Piccardi.
\newblock Finding and testing network communities by lumped {M}arkov chains.
\newblock {\em PLoS ONE}, 6:e27028, 2011.

\bibitem{Tif}
Ryan Tifenbach.
\newblock On an {SVD}-based algorithm for identifying meta-stable states of
  {M}arkov chains.
\newblock {\em Electron. Trans. Numer. Anal.}, 38:17--33, 2011.

\bibitem{vanHuff2}
Sabine Van~Huffel.
\newblock Iterative algorithms for computing the singular subspace of a matrix
  associated with its smallest singular values.
\newblock {\em Linear Algebra Appl.}, 154-156:675–709, 1991.

\bibitem{vanHuff1}
Sabine Van~Huffel, Joos Vandewalle, and Ann Haegemans.
\newblock An efficient and reliable algorithm for computing the singular
  subspace of a matrix, associated with its smallest singular values.
\newblock {\em J. Comput. Appl. Math.}, 19:313--330, 1987.

\bibitem{wedel2012market}
Michel Wedel and Wagner~A Kamakura.
\newblock {\em Market segmentation: Conceptual and methodological foundations}.
\newblock Springer Science \& Business Media, 2012.

\end{thebibliography}

\appendix

\section{A coupling matrix with a small diagonal entry}\label{sec:smalldiag}

In this appendix we give the details of Example~\ref{ex:smalldiag}.  We find that
 \begin{eqnarray*}
 &&(I-T)(I-T^{\top})= \\
&&\left[ \begin{array}{c|c|c} 
2\epsilon^2 & -\epsilon \delta & \epsilon (y-x) \1^{\top} \\ \hline 
-\epsilon \delta & \delta^2 \frac{n-1}{n-2} & -\frac{\delta}{n-2}(x+(n-1)y) \1^{\top} \\
\hline \epsilon (y-x) \1 & -\frac{\delta}{n-2}(x+(n-1)y) \1&(x+y)^2 I +(x^2+y^2)J
\end{array}\right].
  \end{eqnarray*}
It follows that $x+y$ is a singular value of $I-T$ of multiplicity $n-3,$ and that the remaining singular values are the square roots of the eigenvalues of the following matrix: 
$$ M = 
\left[ \begin{array}{ccc} 
2\epsilon^2 & -\epsilon \delta & \epsilon (y-x)(n-2) \\  
-\epsilon \delta & \delta^2 \frac{n-1}{n-2} & -\delta(x+(n-1)y) \\
 \epsilon (y-x)  & -\frac{\delta}{n-2}(x+(n-1)y) &(x+y)^2  +(x^2+y^2)(n-2)
\end{array}\right].
$$
We find that the characteristic polynomial of $M$ is $p(z) = z[z^2 - ((n-1)(x^2+y^2)+2xy+2\epsilon^2 + \delta^2 \frac{n-1}{n-2}  )z +
\epsilon^2 n(x+y)^2 +\delta^2(\frac{n\epsilon^2}{n-2}+nx^2)].$ Since both $\delta$ and $\epsilon$ are small and positive, with $\delta \ll \epsilon,$ it follows that the smallest positive eigenvalue of $M$ is $\frac{\epsilon^2 n(x+y)^2}{(n-1)(x^2+y^2)+2xy} + O(\epsilon^4)$; hence for $I-T, \sigma_{n-1} = 
\frac{\epsilon (x+y)\sqrt{n}}{\sqrt{(n-1)(x^2+y^2)+2xy}} + O(\epsilon^2)$

For the corresponding left singular vector, we see that it is a scalar multiple of the vector $\left[ \begin{array}{c}1\\-a \\-b \1 \end{array}
 \right],$ where 
 $$\left[ \begin{array}{cc} \sigma_{n-1}^2 - \delta^2 \frac{n-1}{n-2} & \delta(x+(n-1)y) \\ 
 \frac{\delta}{n-2}(x+(n-1)y) &  \sigma_{n-1}^2 - ((n-1)(x^2+y^2)+2xy) 
 \end{array} \right] \left[ \begin{array}{c} a\\b \end{array}
 \right] = \epsilon \left[ \begin{array}{c}-\delta \\ y-x\end{array}
\right].
 $$
We deduce that $\left[ \begin{array}{c} a\\b \end{array}\right]$ is a positive scalar multiple of the vector 
$$\left[ \begin{array}{c}
 \delta(nx(x+y)-\sigma_{n-1}^2  )\\ 
\delta^2(\frac{x+(n-1)y}{n-2}) + (y-x)(\sigma_{n-1}^2 - \delta^2(\frac{n-1}{n-2})) 
 \end{array}\right].$$
Consequently the left singular vector of $I-T$ corresponding to $\sigma_{n-1}$ is a scalar multiple of 
$$\left[ \begin{array}{c} 1\\
-\delta(nx(x+y)-\sigma_{n-1}^2  )\\ 
-[\delta^2(\frac{x+(n-1)y}{n-2}) + (y-x)(\sigma_{n-1}^2 - \delta^2(\frac{n-1}{n-2})) ]\1
\end{array}\right].$$ In particular the sign of the first entry of that singular vector is opposite to the sign of all the remaining entries.  Our clustering heuristic then places index $1$ in the first cluster and indices $2, \ldots, n$ in the second cluster. 

Next we consider the $(2,2)$ entry of the associated coupling matrix, which corresponds to the second cluster. Note that 
\begin{eqnarray*}
&&\left[ \begin{array}{c|c}
\delta(nx(x+y)-\sigma_{n-1}^2  ) & 
[\delta^2(\frac{x+(n-1)y}{n-2}) + (y-x)(\sigma_{n-1}^2 - \delta^2(\frac{n-1}{n-2})) ]\1^{\top}
\end{array}\right] \times \\ 
&&\left[ \begin{array}{c|c} 
1-\delta&\frac{\delta}{n-2} \1^{\top} \\ 
\hline y\1&(1-x-y)I
\end{array}\right] \1 \\ 
&&= \delta(nx(x+y)-\sigma_{n-1}^2  )+\\ &&(1-x)(n-2)\left[\delta^2\left(\frac{x+(n-1)y}{n-2}\right) + (y-x)\left(\sigma_{n-1}^2 - \delta^2\left(\frac{n-1}{n-2}\right)\right) \right].
\end{eqnarray*}
It now follows that the $(2,2)$ entry of the coupling matrix is given by 
$$\mu = 1 - \frac{x(n-2)[\delta^2(\frac{x+(n-1)y}{n-2}) + (y-x)(\sigma_{n-1}^2 - \delta^2(\frac{n-1}{n-2})) ]}
{\delta(nx(x+y)-\sigma_{n-1}^2  ) +(n-2)[\delta^2(\frac{x+(n-1)y}{n-2}) + (y-x)(\sigma_{n-1}^2 - \delta^2(\frac{n-1}{n-2})) ] }.$$ Since $\delta \ll  \epsilon,$ it follows that $\mu = 1-x+O(\epsilon^2).$ On the other hand, $$1-\sqrt{n} \sigma_{n-1} = 1 - \frac{\epsilon (x+y)n}{\sqrt{(n-1)(x^2+y^2)+2xy}} + O(\epsilon^2).$$ Since  $\epsilon$ is small, 
$1-x < 1 - \frac{\epsilon (x+y)n}{\sqrt{(n-1)(x^2+y^2)+2xy}} + O(\epsilon^2),$ so that $\mu < 
1-\sqrt{n} \sigma_{n-1}. $

\section{Explicit computation of the left-iterative weight vector}\label{sec:liwv}

In this section we show explicitly how the left-iterative weight vector is computed as part of the Left Singular Vector clustering algorithm.

\begin{algorithm}[H]\label{alg+liwv}

\caption{The Left Singular Vector algorithm + Left Iterative Weight Vector}
Input: Stochastic matrix $T$ with index set $S$, tolerance $\tau$, optional weight vector $w$\\
Output: A set of disjoint subsets (clusters) of the index set of $T$, left-iterative weight vector $v(T, \tau)$.


\begin{enumerate}
\item	\label{step:leftsingvec}Let $\sigma$ be the second smallest singular value of $I - T$ and $u$ a corresponding left singular vector with mixed signs.

\item	If $w$ was not provided, set $w := \1_{|S|}$. This initialization will take place in the top-level call to the algorithm.

\item	\label{step:partition}If $\sigma > \tau$, do nothing.  Otherwise, let $S_1$ and $S_2$ be the sets of indices corresponding to positive and negative entries of $u$, respectively, and replace the entries of $w$ corresponding to indices in $S$ with $|u|$.  

\item	\label{step:dnf}For $i = 1, 2$, let $\tilde T_i := \dnf(T[S_i])$.  
\item	Recurse on $\tilde T_1$, then on $\tilde T_2$, passing $w$ as the weight vector in each case.  The recursive call on $\tilde T_i$ returns a set of clusters $\mathcal P_i$, i.e., a set of disjoint subsets of $S_i$, and updates $w$.  Note that these recursive calls take place in succession, not in parallel, and that there may be some unclustered vertices in $S_i$ which do not belong to any of the sets in $\mathcal P_i$.  

\item	Return $\mathcal P := \mathcal P_1 \cup P_2$, i.e.\ the set of all clusters found, and $v(T, \tau) := w$.


\end{enumerate}
\end{algorithm}

\end{document}